\numberwithin{equation}{section}
\newtheorem{theorem}{Theorem}[section]
\newtheorem{corollary}[theorem]{Corollary} 
\newtheorem{lemma}[theorem]{Lemma}
\newtheorem{proposition}[theorem]{Proposition}
\newtheorem{question}{Question}
\newtheorem{convention}[theorem]{Convention}
\theoremstyle{definition}
\newtheorem{example}[theorem]{Example}
\newtheorem{definition}[theorem]{Definition}
\newtheorem{remark}[theorem]{Remark}
\newtheorem{co  on nvention}[theorem]{Convention}
\newtheorem{construction}{Construction}
\newcommand{\CC}{{\mathbb{C}}}
\newcommand{\RR}{{\mathbb{R}}}
\newcommand{\KK}{{\mathbb{K}}}
\newcommand{\PP}{{\mathbb{P}}}
\newcommand{\Mcal}{{\mathcal{M}}}
\newcommand{\Gcal}{{\mathcal{G}}}
\newcommand{\T}{^\mathsf{T}}
\newcommand{\GL}{\operatorname{GL}}
\newcommand{\adj}{\operatorname{adj}}
\newcommand{\coker}{\operatorname{coker}}
\newcommand{\Mat}{\operatorname{Mat}}
\newcommand{\Hom}{\operatorname{Hom}}
\newcommand{\im}{\operatorname{im}}
\newcommand{\pa}{\mathrm{pa}}
\newcommand{\PD}{{\rm PD}}
\DeclareMathOperator{\mlt}{mlt}
\definecolor{forest}{RGB}{11,128,35}
\definecolor{violet}{rgb}{0.54, 0.17, 0.89}
\title[Complete collineations for maximum likelihood estimation]{Complete collineations \\ for maximum likelihood estimation}
\date{}
\author{Gergely Bérczi}
\address{Aarhus University}
\email{gergely.berczi@math.au.dk}
\author{Eloise Hamilton}
\address{University of Cambridge}
\email{eloise.hamilton@newn.cam.ac.uk} 
\author{Philipp Reichenbach}
\address{Technische Universit\"at Berlin}
\email{reichenbach@tu-berlin.de}
\author{Anna Seigal}
\address{Harvard University}
\email{aseigal@seas.harvard.edu}
\begin{document}

\maketitle

\begin{abstract}
We import the algebro-geometric notion of a complete collineation into the study of maximum likelihood estimation in directed Gaussian graphical models. 
A complete collineation produces a perturbation of sample data, which we call a stabilisation of the sample. 
While a maximum likelihood estimate (MLE) may not exist or be unique given sample data, it is always unique given a stabilisation. 
We relate the MLE given a stabilisation to the MLE given original sample data, when one exists, providing
necessary and sufficient conditions for the MLE given a stabilisation to be one given the original sample.
For linear regression models, we show that the MLE given any stabilisation is the minimal norm choice among the MLEs given an original sample.
We show that the MLE has a well-defined limit as the  stabilisation of a sample tends to the original sample, and that the limit is an MLE given the original sample, when one exists. 
Finally, we study which MLEs given a sample can arise as such limits.
We reduce this
to a question regarding the non-emptiness of certain algebraic varieties.  
\end{abstract} 


\section{Introduction}

We study maximum likelihood estimation in directed Gaussian graphical models.
The existence or uniqueness of a maximum likelihood estimate (MLE) given observed data is known to depend on the number of samples and their genericity~\cite{buhl1993existence,drton2019maximum,gross2014maximum}. 
Several approaches have been proposed to compute MLEs given data that is insufficient or non-generic, including regularisation~\cite{danaher2014joint}, dividing the model into sub-networks~\cite{wille2004sparse}, and reducing the number of parameters via symmetries~\cite{makam2021symmetries}.
We propose a new approach to this problem based on the algebro-geometric concept of a complete collineation. The idea is that if an MLE does not exist or is not unique given observed data, the data may be perturbed so that a unique MLE can be found. This unique MLE can then be used to single out an MLE given the initial data, if one exists, or otherwise to obtain a statistically meaningful MLE given the initial data. The key to proving these results is to have the right notion of perturbation. We propose that perturbations arising from complete collineations are a natural choice.

The distributions we consider are mean-centred $m$-dimensional Gaussians, for some dimension $m$.
Our models are parametrised by certain subsets of the cone of $m \times m$ positive definite matrices.
Sample data can be collected into a matrix $Y$ of size $n \times m$, where $n$ is the number of observations. The existence or uniqueness of the MLE given $Y$ depends on the model and on the properties of the matrix $Y$. For example, if the model is the full cone of positive definite matrices, the MLE given $Y$ exists and is unique if and only if $Y$ has full column rank. This cannot occur for $n < m$ but occurs generically once $n \geq m$.

In this paper, we think of a sample $Y \in \RR^{n \times m}$ as a linear map $\RR^m \to \RR^n$. If the MLE does not exist or is not unique given $Y$, then $Y$ is a degenerate linear map, i.e.\ it does not have maximal rank. We adopt the view that $Y$ should be considered not on its own but together with the additional information that a complete collineation provides. This additional information can be packaged into a new sample $\widetilde{Y}$, which we call a stabilisation of $Y$, such that the MLE is unique given $\widetilde{Y}$ and such that this MLE can be related to MLEs given $Y$ if one exists. The extra information carried by $\widetilde{Y}$ should be thought of as ensuring $\widetilde{Y}$ is a `well-behaved' degeneration of a sample corresponding to a non-degenerate linear map, i.e.\ a map of maximal rank. In particular, the sample $\widetilde{Y}$ should be viewed as a `better' degeneration than the degenerate map $Y$ itself.  \\

\textbf{Why complete collineations?} 
While degenerate linear maps are the most obvious candidates for degenerations of non-degenerate linear maps, an important lesson originating in the work of late 19th century geometers is that they are not the right notion of degeneration from the point of view of enumerative geometry~\cite{Kleinman1988}. The key insight from this line of work is that degenerations should carry more information than just that of a degenerate linear map; the key contribution lies in identifying exactly what this information should be. Complete collineations encode the necessary information. 

A collineation between two projective spaces $\PP(V)$ and $\PP(W)$ is the scalar equivalence class $[f]$ of a non-degenerate linear map $f: V \to W$. By convention, we map from the smaller projective space to the larger one, so we assume that $\dim V \leq \dim W$. The term collineation originates in the fact that the map $[f]$ sends collinear points in $\PP(V)$ to collinear points in $\PP(\im f)$ bijectively. 
In fact, the map $[f]$ not only maps lines to lines but also maps $i$-planes to $i$-planes, via associated maps $[\wedge^i f]: \PP(\wedge^i V)\to \PP(\wedge^i W)$, for $i$ from $1$ to $\dim V$. By contrast, if $f$ is a degenerate map from $V$ to $W$, then while the equivalence class $[f]$ is well-defined, the equivalence classes $[\wedge^i f]$ may no longer be well-defined, since $i$-planes may collapse to $j$-planes for some $j < i$. 
Complete collineations are degenerations of collineations that preserve the higher-order information of the $i$-plane to $i$-plane correspondences $[\wedge^i f]$~\cite[p254]{Kleinman1988}.
Concretely, a complete collineation from $\PP(V)$ to $\PP(W)$ with $\dim V \leq \dim W$ is a finite sequence $([f_1],\hdots, [f_t])$ of equivalence classes of linear maps, where $f_1: V \to W$, $f_i: \operatorname{ker} f_{i-1} \to \operatorname{coker} f_{i-1}$ for $i \geq 2$ and $f_t$ is the first non-degenerate map -- see Section \ref{sec:agprelim} for details.

Given a sample corresponding to a degenerate linear map $f: \RR^m \to \RR^n$, we define a stabilisation of $f$, or $f$-stabilisation, to be a sample $\widetilde{f} : = f + f'$ where the perturbation $f'$ comes from a complete collineation between $\PP(\RR^m)$ and $\PP(\RR^n)$ with first term $[f]$. We will always reduce to the case where $m \leq n$ (see Section \ref{sec:relating}). A precise definition of sample stabilisations is in Section \ref{subsec:samplestab}. Properties of complete collineations ensure that the MLE is unique given $\widetilde{f}$.  While various conditions could be placed on $f'$ to ensure that the MLE given $\widetilde{f}$ is unique, our conditions have the advantage that the MLE given $\widetilde{f}$ and the MLEs given $f$, if they exist, are closely related. In this paper, we use complete collineations to resolve non-identifiability of the MLE.\\

\textbf{Main results.} 
Fix a directed acyclic graph (DAG) $\Gcal$ on vertices $\{ 1, \ldots, m \}$ with edge set $E$. A directed edge from $j$ to $i$ is denoted by $j \to i$. The acyclicity rules out directed cycles $j \to i \to \cdots \to k \to j$. A child vertex is a vertex $i$ with a parent in $\Gcal$, i.e.\ with an edge $j \to i$ in $\Gcal$ for some vertex $j$. 
The statistical models we consider are directed Gaussian graphical models on DAGs. We call these DAG models, for short.
They have $m + |E|$ parameters, one for each edge and one for each vertex.
The MLE given data $Y$ consists of estimates for all of these parameters -- see Section~\ref{sec:asprelim} for details.
We work throughout over a field $\KK$ which can be taken to be either $\RR$ or $\CC$. Our results hold over both fields.

Our first main result relates the MLE given a stabilisation to an MLE given an original sample. 
We denote the span of a set of vectors $\{ v_1, \ldots, v_k\}$ by $\langle v_1, \ldots, v_k \rangle$ and the projection of a vector $v$ onto a linear space $L$ by $\pi_L(v)$.

\begin{theorem} \thlabel{firstmainresult}
Fix a DAG $\mathcal{G}$ and a sample $f \in \KK^{n \times m}$. Let $\widetilde{f} = f+f'$ denote a stabilisation of $f$. Let $f_i$ and  $v_i$ denote the columns of $f$ and $f'$ respectively. We have the following results concerning maximum likelihood estimation in the DAG model on $\Gcal$: \begin{enumerate}[(a)]
\item the MLE given $\widetilde{f}$ is unique; \label{unique} 
\item the MLE given $\widetilde{f}$ is an MLE given $f$ if and only if for all child vertices $i$ in $\mathcal{G}$ we have: 
$$ v_i \in \langle v_j :j \to i \rangle \text{ and }  \overline{f_i} +  v_i \in \langle f_j + v_j :j \to i \rangle, $$ where $\overline{f_i} := \pi_{\langle f_j   : j \to i \rangle}(f_i)$; \label{iff} 
\item the MLE given $\widetilde{f}(\epsilon) : = f + \epsilon f'$ is unique for all $\epsilon \neq 0$, and has a well-defined limit as $\epsilon$ tends to zero, called the \emph{limit MLE given $\widetilde{f}$}, which can be described explicitly (see \thref{mainresult}\ref{partc}); \label{limitexists}
\item the limit MLE given $\widetilde{f}$ is an MLE given $f$, if one exists. \label{limitis} 
\end{enumerate} 
\end{theorem}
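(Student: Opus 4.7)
The approach is to reduce the theorem to vertex-by-vertex statements about linear regression. The log-likelihood of the DAG Gaussian model factorises over vertices: for each child vertex $i$, the contribution is the likelihood of a Gaussian linear regression of column $f_i$ on the parent columns $\{f_j : j \to i\}$. The MLE therefore decomposes into per-vertex least-squares problems together with residual variances: the MLE regression coefficients $\Lambda_{\cdot, i}$ are any solution to $\sum_{j \to i} \Lambda_{ji} f_j = \overline{f_i}$, and the MLE variance is $\|f_i - \overline{f_i}\|^2/n$. Hence the MLE given $f$ exists if and only if every residual $f_i - \overline{f_i}$ is nonzero, and is unique if and only if, in addition, the parent columns are linearly independent at every child vertex.

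For part \ref{unique}, I would invoke the defining structure of the complete collineation underlying $\widetilde{f} = f + f'$. The iterative data $(f_1, \ldots, f_t)$ with $f_1 = f$ and $f_\ell$ defined on $\ker f_{\ell - 1}$ terminates at a non-degenerate map, and the construction of $f'$ (Section \ref{subsec:samplestab}) is designed so that at every child vertex $i$ the perturbed parent columns $\{f_j + v_j : j \to i\}$ are linearly independent and $\widetilde{f}_i \notin \langle f_j + v_j : j \to i \rangle$. Existence and uniqueness of the MLE given $\widetilde{f}$ then follow from the reduction above.

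For part \ref{iff}, let $\widetilde{\lambda}_{\cdot, i}$ denote the unique MLE regression vector at vertex $i$ given $\widetilde{f}$, characterised by $\sum_{j \to i} \widetilde{\lambda}_{ji} (f_j + v_j) = \pi_{\langle f_j + v_j : j \to i\rangle}(\widetilde{f}_i)$. I would show that $\widetilde{\lambda}_{\cdot, i}$ is an MLE given $f$ if and only if $\sum_{j \to i} \widetilde{\lambda}_{ji} f_j = \overline{f_i}$. Splitting the defining equation for $\widetilde{\lambda}$ into its components in $\langle f_j : j \to i\rangle$ and in the orthogonal complement, and using $\widetilde{f}_i = f_i + v_i$, gives the first condition $v_i \in \langle v_j : j \to i\rangle$ (so the perturbation decomposes compatibly across parents) and the second condition $\overline{f_i} + v_i \in \langle f_j + v_j : j \to i\rangle$ (so that the MLE equation for $\widetilde{f}$ is indeed satisfied by such a $\widetilde{\lambda}$). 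The converse follows by exhibiting such a $\widetilde{\lambda}$ directly from these two conditions.

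Parts \ref{limitexists} and \ref{limitis} go together. Since $\widetilde{f}(\epsilon) = f + \epsilon f'$ is again a stabilisation of $f$ for every $\epsilon \neq 0$ (rescaling the perturbation preserves the complete collineation structure up to projective equivalence at each level), part \ref{unique} gives uniqueness of the MLE for every $\epsilon \neq 0$; the MLE coefficients are then rational functions of $\epsilon$, obtained from the pseudoinverse of a matrix with polynomial entries in $\epsilon$. The main obstacle is controlling the limit as $\epsilon \to 0$, where this pseudoinverse is discontinuous at rank drops: here the complete collineation structure provides an explicit block decomposition of the parent columns along the kernel/cokernel filtration of $f$, which permits an explicit expansion of the pseudoinverse and confirms that a well-defined limit exists (matching the formula asserted in \thref{mainresult}\ref{partc}). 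For \ref{limitis}, passing to the limit in $\sum_{j \to i} \Lambda_{ji}(\epsilon)(f_j + \epsilon v_j) = \pi_{\langle f_j + \epsilon v_j : j \to i\rangle}(\widetilde{f}_i(\epsilon))$ yields that the limit coefficients satisfy $\sum_{j \to i} \Lambda_{ji} f_j = \overline{f_i}$, and when an MLE given $f$ exists the residuals $\|f_i - \overline{f_i}\|^2$ are nonzero, so the limit coefficients together with these residual variances form an MLE given $f$.
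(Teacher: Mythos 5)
Your overall architecture (vertex-by-vertex reduction to least-squares plus residual variances, orthogonality of the columns of $f$ and $f'$, rescaled stabilisations $\widetilde f(\epsilon)$) matches the paper's, and part \ref{unique} is fine modulo the short verification, as in \thref{samplestabisstable}, that conditions (i)--(ii) of \thref{sampleperturbandstab} force $\widetilde f$ to have trivial kernel. But in part \ref{iff} your derivation has a genuine flaw: splitting the normal equation $\sum_{j\to i}\widetilde\lambda_{ji}(f_j+v_j)=\pi_{\langle f_j+v_j : j\to i\rangle}(f_i+v_i)$ into its component in $\langle f_j : j\to i\rangle$ and its orthogonal complement cannot produce the condition $v_i\in\langle v_j : j\to i\rangle$. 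By orthogonality the right-hand side equals $\pi_{\langle f_j+v_j : j\to i\rangle}(\overline{f_i}+\overline{v_i})$, so it sees $v_i$ only through $\overline{v_i}=\pi_{\langle v_j : j\to i\rangle}(v_i)$; matching the regression coefficients is therefore equivalent to the strictly weaker condition $\overline{f_i}+\overline{v_i}\in\langle f_j+v_j : j\to i\rangle$ (this is exactly \thref{MLEofstab}). The condition $v_i\in\langle v_j : j\to i\rangle$ enters only when you compare the variance components: the residual given $\widetilde f$ is $(f_i-\overline{f_i})+(v_i-\overline{v_i})$, and it has the same norm as the residual $f_i-\overline{f_i}$ given $f$ precisely when $v_i=\overline{v_i}$, again by orthogonality (\thref{OmegaMLEofstab}). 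Your sketch never compares the $\Omega$-components, so the ``only if'' direction of \ref{iff} is not established as written.

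The larger gap is in parts \ref{limitexists} and \ref{limitis}: the existence of the limit as $\epsilon\to 0$ is the technical heart of the theorem, and your appeal to ``an explicit block decomposition along the kernel/cokernel filtration'' that ``permits an explicit expansion of the pseudoinverse'' is an assertion, not an argument. The relevant matrices $A\T A$ and $E\T E$ (parent blocks of $f$ and $f'$) need not commute, so there is no simultaneous block structure; moreover $A(\epsilon)^{+}$ itself diverges when the rank drops, and only the product $A(\epsilon)^{+}\pi_{A(\epsilon)}(b(\epsilon))$ converges, and only because of the orthogonality hypotheses (\thref{ex:fails} shows the limit fails without them). The paper devotes all of Section \ref{sec:limitsolutions} to this point, either geometrically (Grassmannian limits plus the quantitative projection-perturbation bound of \thref{lemma2}, giving boundedness of the coefficients) or algebraically (expansion of $\det C'(\epsilon)$ and $\adj C'(\epsilon)$ via Jacobi's formula and the positive-semidefinite trace induction of \thref{stronginduction} showing that vanishing of the $c_k$ forces vanishing of the $D_k$); nothing playing this role appears in your proposal. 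Relatedly, your passage to the limit in the normal equations for \ref{limitis} silently uses $\pi_{A(\epsilon)}(b(\epsilon))\to\pi_A(b)$, which is \thref{prop:if_limit} and itself requires the orthogonality plus a limit-subspace argument, not just rationality of the coefficients in $\epsilon$.
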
 

Our second main result addresses when an MLE given $f$ is the MLE or limit MLE given a stabilisation of $f$.  

\begin{theorem} \thlabel{secondmainresult} 
Fix a DAG $\mathcal{G}$ and a sample $f \in \KK^{n \times m}$. Let $\alpha$ denote an MLE given $f$ in the DAG model on $\Gcal$. Then: 
\begin{enumerate}[(a)]
\item there is a locally closed subvariety $X_{f}  \subseteq \KK^{n \times m}$ parametrising stabilisations of $f$; \label{psforstab} 
\item there is a closed subvariety $X_{f, \alpha} \subseteq X_f,$ with  defining equations given in \eqref{explicit}, parameterising stabilisations $\widetilde{f}$ of $f$ such that the MLE given $\widetilde{f}$ is $\alpha$, so that $$X_{f,\alpha} \neq \emptyset$$ if and only if $\alpha$ is the MLE given an $f$-stabilisation; \label{ps2} 
\item there is a closed subvariety $X_{f, \alpha}^{\operatorname{lim}} \subseteq X_f,$ with  defining equations given in \eqref{fixedlambda}, parameterising stabilisations $\widetilde{f}$ of $f$ such that the limit MLE given $\widetilde{f}$ is $\alpha$, so that $$X_{f,\alpha}^{\operatorname{lim}} \neq \emptyset$$ if and only if $\alpha$ is the limit MLE given an $f$-stabilisation. \label{ps3} 
\end{enumerate} 
\end{theorem}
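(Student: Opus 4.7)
The plan is to combine the algebro-geometric description of complete collineations recalled in Section~\ref{sec:agprelim} with the explicit formulas for the MLE and limit MLE provided by \thref{firstmainresult}.

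For part~\ref{psforstab}, I would argue as follows. A stabilisation $\widetilde{f} = f + f'$ is, by Section~\ref{subsec:samplestab}, determined by a complete collineation $([f_1],\hdots,[f_t])$ with $[f_1]=[f]$, and the perturbation $f'$ is read off algebraically from the residual data $(f_2,\hdots,f_t)$. The space of complete collineations from $\PP(\KK^m)$ to $\PP(\KK^n)$ is a smooth quasi-projective variety obtained as an iterated blow-up of $\PP(\Hom(\KK^m,\KK^n))$ along the rank strata, so fixing the first factor $[f_1]=[f]$ carves out a locally closed subvariety: the fibre over $[f]$ is the space of complete collineations on the pair $(\ker f, \coker f)$, which is itself quasi-projective. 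Composing this parametrisation with translation by $f$ embeds the fibre into $\KK^{n\times m}$; verifying that the image is locally closed reduces to checking that the parametrisation is a morphism whose image meets the relevant rank strata, both of which are algebraic conditions.

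For parts~\ref{ps2} and~\ref{ps3}, I would exploit the vertex-by-vertex decoupling of DAG MLEs. For each child vertex $i$, the MLE assigns regression coefficients $\Lambda_{j\to i}$ for each $j\to i$ and a conditional variance $\Omega_{ii}$, determined by projecting the $i$-th column of the sample onto the span of its parents' columns. \thref{firstmainresult}\ref{limitexists} gives an explicit rational formula for the MLE given $\widetilde{f}$ and for its $\epsilon\to 0$ limit in terms of the columns $\{f_j+v_j\}$, $\{f_j\}$, and the projections $\overline{f_j}$. Substituting these expressions into the equations ``regression coefficient at $j\to i$ equals $\alpha_{j\to i}$'' and ``conditional variance at $i$ equals $\alpha_{ii}$'', and clearing the Gram-type determinants appearing in the denominators -- which are nonvanishing on $X_f$ because stabilisation is designed precisely to make the relevant parent columns $\{f_j+v_j\}$ linearly independent -- produces the polynomial system~\eqref{explicit}. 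Defining $X_{f,\alpha}$ as its vanishing locus inside $X_f$ makes~\ref{ps2} tautological: $\widetilde{f}\in X_{f,\alpha}$ iff the unique MLE given $\widetilde{f}$ is $\alpha$, so $X_{f,\alpha}\neq\emptyset$ iff $\alpha$ is realised in this way. The argument for~\ref{ps3} is identical but substitutes the limit-MLE formula of \thref{firstmainresult}\ref{limitexists} in place of the MLE formula, yielding the system~\eqref{fixedlambda} and the subvariety $X_{f,\alpha}^{\operatorname{lim}}$.

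The main obstacle will be part~\ref{psforstab}: tracking the iterated blow-up construction carefully enough to conclude that the fibre over $[f]$ embeds as a genuine locally closed subvariety of $\KK^{n\times m}$, rather than as a merely constructible set or a scheme requiring further work to realise in matrix space. Parts~\ref{ps2} and~\ref{ps3} then follow by routine manipulation of the MLE formulas; the only delicate point is to check that clearing denominators introduces no spurious components, which is ensured by the non-vanishing of the Gram determinants on $X_f$, so that \eqref{explicit} and \eqref{fixedlambda} give complete defining systems for the respective loci inside $X_f$.
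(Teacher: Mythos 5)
Your treatment of part \ref{psforstab} has a genuine gap, and it stems from starting in the wrong place. In the paper a stabilisation is \emph{defined} by \thref{sampleperturbandstab}: $\widetilde f = f+f'$ where $f'$ satisfies the orthogonality condition $\im f' \subseteq (\im f)^\perp$, the condition $(\ker f')^\perp = \ker f$, and hence a rank condition $\operatorname{rk} f' = m-r$. Consequently the parameter space is simply $X_f = X_{m-r}\cap Y_f$ as in \eqref{defofpsYf}: a linear subspace (orthogonality of rows and columns of $f'$ to those of $f$) intersected with the locally closed rank stratum, which is locally closed with no further work (\thref{psfperturb}). Your route -- realising stabilisations as the fibre of the iterated blow-up over $[f]$, translating by $f$, and checking the image is locally closed -- is not what is needed: complete collineations only enter through \thref{construction1} as a way of \emph{producing} stabilisations, and the paper never needs (nor proves) that every stabilisation arises this way, which your parametrisation claim would require. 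Moreover you yourself flag the blow-up tracking as the unresolved ``main obstacle''; as written, part \ref{psforstab} is therefore not proved, even though the statement is essentially immediate from the definitions.

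For parts \ref{ps2} and \ref{ps3} your set-theoretic idea is sound but does not reproduce the paper's defining equations, and it has a field-dependent defect. The paper does not clear Gram determinants in the MLE formula for \ref{ps2}; instead \thref{firststep} shows, using \thref{OmegaMLEofstab} and the orthogonality built into $X_f$, that the MLE given $\widetilde f$ equals $\alpha$ if and only if the \emph{linear} conditions $v_i = \sum_{j\to i}\lambda_{ij}v_j$ of \eqref{explicit} hold -- and the ``only if'' direction requires an actual argument (showing the discrepancy vector vanishes), not a formal substitution. Your cleared-denominator system cuts out the same locus over $\RR$ (the Gram determinants are indeed nonzero on $X_f$ by \thref{samplestabisstable}), but over $\CC$ the inner products $v_j\cdot v_k$ and the residual-norm equations for the $\Omega$-component involve conjugates of the unknowns, so what you write down is only a real-algebraic set, whereas \eqref{explicit} is conjugation-free and gives a closed subvariety over both fields; the $\Omega$-matching is handled in the paper via uniqueness of the $\Omega$-MLE rather than by equating norms. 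For \ref{ps3} your plan does essentially coincide with the paper's (\eqref{fixedlambda} is exactly $c_l\lambda_i - D_l = 0$, the limit formula of \thref{mainresultLambda} with denominator cleared), but note the denominator there is $c_l$, nonzero by the minimality of $l$ rather than by linear independence of parent columns, and one still needs the argument of \thref{answerq2gen} -- via \thref{mainresult}\ref{partc} and uniqueness of the $\Omega$-MLE -- that matching the $\Lambda$-component alone already forces the limit MLE to be $\alpha$.
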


We apply the above results to DAG models on a star-shaped graph, which in this paper refers to a connected DAG with a unique child vertex, as in Figure~\ref{fig:star}. 
Such models study the linear dependence of one variable on all others -- they are linear regression models with Gaussian noise.

\begin{figure}[htbp]
\begin{tikzpicture}[->,every node/.style={circle,draw},line width=1pt, node distance=1.7cm]
	
	\node (n)  {$5$};
	
	\node (1) [above left of=n] {$1$};
	\node (2) [below left of=n] {$2$};
	\node (3) [below right of=n] {$3$};
	\node (4) [above right of=n] {$4$};
	
	\foreach \to in {1,2,3,4}
	\draw (\to) -- (n);
\end{tikzpicture}
\caption{Star-shaped graph with $m=5$ vertices.}
\label{fig:star}
\end{figure}
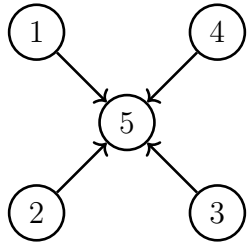

\begin{theorem} \thlabel{linearregression} 
Consider a star-shaped DAG $\mathcal{G}$ and a sample $f$. If the MLE given $f$ exists in the DAG model on $\Gcal$, then the MLE given any stabilisation of $f$ is 
the unique MLE given $f$ of minimal $2$-norm.
\end{theorem}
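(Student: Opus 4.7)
The plan is to combine Theorem~\thref{firstmainresult} with the explicit structure of the MLE for linear regression. In the star-shaped DAG with unique child vertex $i^*$, denote by $F = [f_j : j \to i^*]$ the matrix of parent columns. Existence of the MLE given $f$ requires $f_{i^*} \notin \im F$, and the MLEs for the edge parameter vector $\Lambda \in \KK^{m-1}$ form the affine subspace of solutions to the normal equation $F\T F \Lambda = F\T f_{i^*}$; the minimum $2$-norm element is the Moore--Penrose pseudoinverse solution $\Lambda^\star = F^+ f_{i^*}$, characterised by membership in $(\ker F)^\perp$.

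The first step is to verify the conditions of Theorem~\thref{firstmainresult}(b) for any stabilisation $\widetilde{f} = f + f'$ of $f$. Since $i^*$ is the only child vertex, these reduce to $v_{i^*} \in \langle v_j : j \to i^*\rangle$ and $\overline{f_{i^*}} + v_{i^*} \in \langle f_j + v_j : j \to i^*\rangle$. Both should follow from the structural form of the perturbation $f'$ arising from a complete collineation with first term $[f]$: all columns $v_j$ lie in the image of a common linear map $\widetilde{f_2}: \ker f \to L'$ (with $L'$ a complement of $\im f$) precomposed with the projection onto $\ker f$. Because $\ker f \subseteq \KK^{m-1}$ (as $f_{i^*} \notin \im F$), the columns $v_j$ for $j \neq i^*$ already span the entire image of $\widetilde{f_2}$, and hence contain $v_{i^*}$; moreover $\overline{f_{i^*}} \in \im F$ lies in $\langle \widetilde{f_j} : j \to i^*\rangle$. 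Theorem~\thref{firstmainresult}(b) then yields that the unique MLE $\widetilde{\Lambda}$ given $\widetilde{f}$ is an MLE given $f$, so $\widetilde{\Lambda} \in \Lambda^\star + \ker F$.

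To pin down $\widetilde{\Lambda} = \Lambda^\star$, I would analyse the normal equations $\widetilde{F}\T \widetilde{F} \widetilde{\Lambda} = \widetilde{F}\T \widetilde{f_{i^*}}$ using the orthogonal decomposition $\KK^n = \im f \oplus (\im f)^\perp$, under which $\im F \subseteq \im f$ and $\im F' \subseteq L' = (\im f)^\perp$ for the canonical orthogonal lift. This decouples the system along $\KK^{m-1} = (\ker F)^\perp \oplus \ker F$: the $(\ker F)^\perp$-block of the equations determines the pseudoinverse solution $\Lambda^\star$, while the $\ker F$-block forces the $\ker F$-component of $\widetilde{\Lambda}$ to vanish, due to the specific form of $F'$ and $v_{i^*}$ dictated by the complete collineation lift. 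Hence $\widetilde{\Lambda} \in (\ker F)^\perp \cap (\Lambda^\star + \ker F) = \{\Lambda^\star\}$, i.e.\ $\widetilde{\Lambda} = \Lambda^\star$ is the minimum norm MLE. The main obstacle is making the decoupling precise and verifying the vanishing of the $\ker F$-component, which rests on the interplay between the orthogonal decomposition of $\KK^n$ and the combinatorial data of the complete collineation. An alternative route is to use Theorem~\thref{firstmainresult}(c)--(d): by an explicit computation one checks that for linear regression $\widetilde{\Lambda}(\epsilon)$ is in fact constant in $\epsilon$, so the MLE and the limit MLE coincide; identifying the limit as $\Lambda^\star$ then proceeds by the same orthogonal-block analysis applied at $\epsilon = 0$.
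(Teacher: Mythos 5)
Your outline follows the same skeleton as the paper's proof (first check the hypotheses of \thref{firstmainresult}\ref{iff}, i.e.\ \thref{OmegaMLEofstab}, for every stabilisation, as in \thref{condalwayssatisfied}; then identify the $\Lambda$-MLE given $\widetilde f$ with the minimal norm one, as in \thref{specialcasegraph}), but the two load-bearing verifications are missing. First, the claim that $\overline{f_{i^*}}\in\im F$ ``lies in $\langle f_j+v_j : j\to i^*\rangle$'' is asserted, not proved, and it is not automatic: for arbitrary orthogonal columns one has $\im F\not\subseteq\langle f_j+v_j : j \to i^*\rangle$ in general. This is exactly where the paper runs its dimension count: $\langle f_j+v_j : j\to i^*\rangle$ has dimension $m-1$ because $\widetilde f$ has full column rank, while $\langle f_j : j\to i^*\rangle\oplus\langle v_1,\dots,v_m\rangle$ has dimension $(r-1)+(m-r)=m-1$ (using $f_{i^*}\notin\im F$ and $\dim\im f'=m-r$), which forces $\langle f_j+v_j : j \to i^*\rangle=\langle f_j : j \to i^*\rangle\oplus\langle v_j : j \to i^*\rangle\supseteq\im F$. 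Second, you explicitly leave the crux, ``verifying the vanishing of the $\ker F$-component,'' as an unresolved obstacle attributed vaguely to the collineation data. The fact that closes it is $\ker F'=(\ker F)^{\perp}$, where $F'$ is the matrix of parent columns of $f'$ (equivalently, the row space of $F'$ equals $\ker F$); this is the content of the dimension count in \thref{specialcasegraph}. With it your decoupling does finish: writing $\widetilde\Lambda=\mu+\nu$ with $\mu\in(\ker F)^{\perp}$ and $\nu\in\ker F$, the normal equations $(F\T F+F'\T F')\widetilde\Lambda=F\T f_{i^*}$ split as $F\T F\mu=F\T f_{i^*}$ in $(\ker F)^{\perp}$ and $F'\T F'\nu=0$ in $\ker F$, whence $\nu\in\ker F'\cap\ker F=(\ker F)^{\perp}\cap\ker F=\{0\}$.

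Both missing facts flow from an observation you state but do not exploit fully: existence of the MLE given $f$ means $f_{i^*}\notin\im F$, so every vector of $\ker f$ has vanishing $i^*$-coordinate, i.e.\ $\ker f=\ker F\times\{0\}$. Hence $e_{i^*}\in(\ker f)^{\perp}=\ker f'$, so in fact $v_{i^*}=f'(e_{i^*})=0$, not merely $v_{i^*}\in\langle v_j : j \to i^*\rangle$; this is the paper's key lemma, and you need it anyway to write $\widetilde F\T\widetilde f_{i^*}=F\T f_{i^*}$. Likewise $\ker F'=\{x\in\KK^{m-1} : (x,0)\in\ker f'\}=\{x\in\KK^{m-1} : (x,0)\perp\ker f\}=(\ker F)^{\perp}$, using only the defining properties $\im f'\subseteq(\im f)^{\perp}$ and $\ker f'=(\ker f)^{\perp}$ of a perturbation (so the argument covers all stabilisations, not just those coming from two-step collineations). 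Finally, the statement concerns the full MLE, so you should note that the $\Omega$-component is handled once \thref{OmegaMLEofstab} applies, since the $\Omega$-MLE given $f$ is unique. With these insertions your plan becomes a complete proof essentially identical to the paper's.
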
 

\thref{linearregression}
exhibits a model where exactly one of the MLEs given a sample $f$ can be obtained from the MLE or limit MLE given a stabilisation of $f$.
The unique MLE singled out should be viewed as the `preferred' one, thus resolving the problem of non-identifiability of the MLE given $f$. 
For other DAGs, different stabilisations may give different MLEs, and resolving non-identifiability of the MLE relies on a choice of stabilisation. We describe in Section \ref{subsec:ccassamples} a sampling algorithm which constructs an $f$-stabilisation from any sample $f$ via a finite sequence of samples. Each sample is obtained by sampling linear combinations of the nodes of $\mathcal{G}$, with the number of samples needed strictly decreasing at each step. \\

\textbf{Related work.}  
We are not aware of any existing work connecting complete collineations to algebraic statistics. Nevertheless, in a different direction the closely related concept of complete quadrics has recently been used in algebraic statistics to study particular classes of Gaussian models \cite{michalek2021maximum,dinu2021geometry, manivel2023complete}. Complete quadrics are defined analogously to complete collineations, with the additional constraint that $\dim V = \dim W$ and that $f$ is symmetric; their moduli space enjoys the same features as the moduli space of complete collineation. \cite{michalek2021maximum, manivel2023complete} study generic linear concentration models. These are Gaussian models whose concentration matrices (i.e.\ the inverses of the covariance matrices) are $m \times m$ positive definite matrices lying in a fixed $d$-dimensional generic linear subspace $L$ of $m \times m$ symmetric matrices. The ML degree of such a model is the number of complex critical points of the log-likelihood function for a generic sample covariance matrix, which depends only on $m$ and $d$ by genericity of $L$ and is denoted by $\phi(m,d)$. \cite{michalek2021maximum, manivel2023complete}  connect intersection theory on the space of complete quadrics to the computation of $\phi(m,d)$, leading to a proof that $\phi(m,d)$ is polynomial in $m$ for  fixed $d > 0$ in \cite{manivel2023complete}, as conjectured in \cite{sturmfels2010multivariate}. By contrast, \cite{dinu2021geometry} considers Gaussian graphical models, which are examples of non-generic linear concentration models, and uses intersection theory on the space of complete quadrics to compute the degree of the projective variety associated to Gaussian graphical models on cyclical graphs, answering another conjecture of~\cite{sturmfels2010multivariate}.\\

\textbf{Organisation.} We give preliminaries from algebraic geometry and algebraic statistics in 
Sections \ref{sec:agprelim} and \ref{sec:asprelim} respectively.  We review, for different DAG models, which MLE properties can occur in Section \ref{sec:classification}. We introduce sample stabilisations and their parameter spaces in Section \ref{sec:fromcctoss} (\thref{secondmainresult}\ref{psforstab}), and show how a sample stabilisation is constructed from a complete collineation. Sections \ref{sec:MLEofsamplestab}--\ref{sec:linearregression} prove the main results. Section \ref{sec:MLEofsamplestab} focuses on the MLE given a sample stabilisation (\thref{firstmainresult}\ref{unique} and \ref{iff}, and \thref{secondmainresult}\ref{ps2}). Section \ref{sec:limitsolutions} constructs unique solutions to underdetermined linear systems as the limit of a solution to a perturbation of the linear system. This result is applied in Section \ref{sec:uniqueMLEinlimit} to study the limit MLE given sample stabilisations (\thref{firstmainresult}\ref{limitexists} and \ref{limitis}, and \thref{secondmainresult}\ref{ps3}). In Section \ref{sec:linearregression} we apply the results to linear regression models (\thref{linearregression}). Finally we discuss directions for future work in Section \ref{sec:outlook}.\\

\textbf{Acknowledgments:} We thank Visu Makam for helpful discussions. EH thanks Johan Martens for bringing complete collineations and \cite{vainsencher1984complete} to her attention in a different context, and Dhruv Ranganathan for useful discussions. PR acknowledges funding by the European Research Council (ERC) under the European’s Horizon 2020 research and innovation programme (grant agreement no. 787840).
AS was supported by the NSF (DMR-2011754), and GB was supported by Aarhus University Starting Grant AUFF-29289.

\section{Algebraic Geometry preliminaries} \label{sec:agprelim}

We review the construction of the moduli space of complete collineations and the definition of a complete collineation that we will work with in this paper.

\subsection{The moduli space of complete collineations}

We start by defining the moduli space of complete collineations. A complete collineation is an element of this moduli space. We will give another definition of a complete collineation that is easier to work with in Section \ref{subsec:pointsofms}. 

\begin{definition}[The moduli space of complete collineations] \label{def:spaceCompleteCollineations}
Fix two vectors spaces $V$ and $W$ with $\dim V \leq \dim W$. The \emph{moduli space of complete collineations from $\PP(V)$ to $\PP(W)$} is the closure of the graph of the rational map 
 \begin{align*} 
\phi: \PP(\Hom(V,W)) & \dashrightarrow \PP(\Hom( \wedge^2 V, \wedge^2 W)) \times \cdots \times \PP(\Hom( \wedge^r V, \wedge^r W))\\
[M] & \mapsto ([\wedge^2 M ], \hdots, [\wedge^r M]), 
\end{align*} where $r = \dim V$. 
\end{definition} 
Note that $\phi$ is only well-defined on the locus inside $\PP(\operatorname{Hom}(V,W))$ parametrising collineations, i.e.\ maps of maximal rank.

By construction, the moduli space of complete collineations contains as an open dense subset the space of maximal rank linear maps up to scaling. It can therefore be viewed as a compactification of the space of maps of maximal rank in $\PP(\operatorname{Hom}(V,W))$. This is an alternative compactification to the `obvious' one given by $\PP(\Hom(V,W))$, and has the advantage of having nicer geometric properties: its boundary is a normal crossing divisor, by contrast with the compactification given by $\PP(\Hom(V,W))$ whose boundary is highly singular. This geometric property makes the moduli space of complete collineations useful for tackling enumerative geometry problems related to linear maps \cite{Hervier1982,Thaddeuscc}.

\subsection{Points of the moduli space} \label{subsec:pointsofms}
Despite the simple construction of the moduli space of complete collineations, describing points in the boundary is difficult. In other words, given an element of $$\PP(\operatorname{Hom} (V , W)) \times \PP(\Hom( \wedge^2 V, \wedge^2 W)) \times \cdots \times \PP(\Hom( \wedge^r V, \wedge^r W))$$ with first term not of maximal rank, it is not obvious which properties the remaining terms need to satisfy for the element to lie in the moduli space of complete collineations. Thankfully, there is an alternative construction of the moduli space of complete collineations from which a description of points in the boundary can more readily be extracted.

This construction is obtained via a sequence of blow-ups of $\overline{C}: = \PP(\Hom(V,W))$, as shown by Vaisencher in \cite{vainsencher1984complete}. The sequence can be described inductively as follows: set $\overline{C}_0 = \overline{C}$ and for $i \geq 1$ let $\overline{C}_i$ denote the blow-up of $\overline{C}_{i-1}$ along the proper transform in $\overline{C}_{i-1}$ of the locus of maps $[f] \in \overline{C}$ of rank less than or equal to $i$. Then the moduli space of complete collineations from $V$ to $W$ is isomorphic to $\overline{C}_r$.
In particular, points of the blow-up are in one-to-one correspondence with complete collineations from $V$ to $W$. Moreover, points of the blow-up can be described explicitly by analysing the exceptional divisors at each stage of the blow-up. Doing so yields the following definition, which we will use for the rest of this paper.

\begin{definition}[Complete collineations]
Fix two vector spaces $V$ and $W$ with $\dim V \leq \dim W$. A \emph{complete collineation} from $\PP(V)$ to $\PP(W)$ is a finite sequence $([f_1],\hdots, [f_t])$ of scalar equivalence classes of maps: \begin{align*} 
f_1: V & \to W \\
f_2: \operatorname{ker} f_1 & \to \operatorname{coker} f_1 \\
& \vdots \\
f_t: \operatorname{ker} f_{t-1} & \to \operatorname{coker} f_{t-1} 
\end{align*} where each $f_i$ is degenerate except for $f_t$. An \emph{affine lift} of a complete collineation $([f_1],\hdots,[f_t])$ from $\PP(V)$ to $\PP(W)$ is a sequence $(g_1,\hdots, g_t)$ where $[g_i]=[f_i]$ for each $i$. 
\end{definition}

\section{Algebraic Statistics Preliminaries}  \label{sec:asprelim}

We give background on maximum likelihood estimation and DAG models. 

\subsection{Maximum likelihood estimation}

An $m$-dimensional Gaussian 
with mean zero has density
\begin{align*}
    f_\Sigma(y) = \frac{1}{\sqrt{\det( 2 \pi \Sigma)}}
    \exp \left( -\frac{1}{2} y\T \Sigma^{-1} y \right),
\end{align*}
where $y \in \RR^m$ and the covariance $\Sigma$ lies in the cone of $m \times m$ positive definite matrices  $\PD_m$. 
We refer to a multivariate Gaussian model by its set $\mathcal{M} \subset \PD_m$ of covariance matrices. The elements $\Sigma \in \mathcal{M}$ are parameters for the model. A maximum likelihood estimate (MLE) given sample data consists of parameters that maximise the likelihood of observing that sample.

We collect independent samples $Y_1, \ldots, Y_n \in \RR^m$ as the rows of a matrix $Y \in \RR^{n \times m}$.
Our convention that the rows are indexed by samples and the columns by variables is the transpose of that used in related work~\cite{makam2021symmetries,amendola2021invariant,PhDthesisPhilipp}.
A maximum likelihood estimate (MLE) given data $Y$ is a point $\hat{\Sigma} \in \Mcal$ that maximizes the likelihood of observing $Y$. 
The likelihood function is
$ L_Y(\Sigma) = \prod_{i=1}^n f_{\Sigma}(Y_i)$. 
We work with the 
function 
\begin{equation}\label{eqn:gaussianlikelihood}
    \ell_{Y} (\Sigma) = -\log \det (\Sigma) - \mathrm{tr} (\Sigma^{-1} S_Y),
\end{equation}
where $S_Y = \frac{1}{n} Y\T Y$.
This is the log-likelihood function, up to additive and positive multiplicative constants, hence has the same maximizers. 
An MLE given $Y$ in $\Mcal$ is therefore
$$ \hat{\Sigma} := \arg \max_{\Sigma \in \Mcal} \ell_Y (\Sigma), $$
if such a maximising $\Sigma \in \Mcal$ exists. 
We consider the following four properties which can occur when maximising $\ell_Y(\Sigma)$ over $\Sigma \in \Mcal$:
\begin{enumerate}
    \item[(a)] $\ell_Y$ is unbounded from above
    \item[(b)] $\ell_Y$ is bounded from above
    \item[(c)] the MLE exists (i.e.\ $\ell_Y$ is bounded from above and attains its supremum)
    \item[(d)] the MLE exists and is unique.
\end{enumerate}

\begin{example}
    \label{ex:whole_PD}
Let $\Mcal = \PD_m$ and fix a sample $Y \in \RR^{n \times m}$. The MLE given $Y$ is $S_Y = \frac{1}{n}Y\T Y$ if it is invertible, see e.g.~\cite[Proposition 5.3.7]{sullivant2018algebraic}. 
The matrix $S_Y$ lies in the model $\PD_m$ if and only if it is invertible.
If it is not invertible, then $\ell_Y$ is unbounded and the MLE does not exist.
Put differently, the MLE given $Y$ exists if and only if $Y$ has full column rank.
\end{example}

We define the \emph{maximum likelihood threshold} (mlt) of a multivariate Gaussian model to be the minimal number of samples needed for the MLE to generically exist and be unique. Example~\ref{ex:whole_PD} has $\mlt = m$. 
The study of maximum likelihood thresholds is an active area of study, with recent developments, including~\cite{bernstein2021maximum,drton2019maximum,gross2014maximum,blekherman2019maximum,drton2021existence,derksen2021maximum,derksen2022maximum}.

\begin{remark} \hfill
    \begin{itemize}
        \item[(i)] We assume that the mean is known to be zero. Alternatively, one could estimate the mean in addition to the covariance matrix, i.e.\ consider a model $\RR^m \times \Mcal$ with $\Mcal \subseteq \PD_m$. The MLE for the mean parameter is then the sample mean. Thus, after shifting to the sample mean one can  translate to the mean zero setting. This process shifts the maximum likelihood threshold by one, see \cite[Remark~6.3.7]{PhDthesisPhilipp}.
        
        \item[(ii)] For $m$-dimensional complex multivariate Gaussian distributions \cite{wooding1956multivariate}, one can do maximum likelihood estimation similarly to the above. The covariance matrix $\Sigma$ is Hermitian positive-definite and the sample matrix $Y$ lies in $\CC^{n \times m}$. The log-likelihood function is, up to additive and positive multiplicative constants, as in~\eqref{eqn:gaussianlikelihood} with $S_Y$ now formed using the conjugate transpose -- see \cite[Section~1.2]{derksen2021maximum} and \cite[Section~6.3]{PhDthesisPhilipp}. From here on we will work over $\KK \in \{\RR, \CC\}$, as in \cite{PhDthesisPhilipp}.
    \end{itemize}
\end{remark}

\subsection{Directed Gaussian graphical models} 

Linear structural equation models study linear relationships between noisy variables of interest. Directed Gaussian graphical models are a special case. 
    Let $\mathcal{G}=(V,E)$ be a DAG on vertices $V=\{1,2,\ldots, m\}$ and directed edges $E$.  
        A directed edge from $j$ to $i$ is denoted by $j \to i$ and the absence of such an edge by $j \not\to i$.
        The {\em parents} of $i$ in $\Gcal$ is the subset of vertices
$$ \pa(i) := \{ j \in V \mid (j \to i) \in E \}.$$ 
    A \emph{directed Gaussian graphical model on $\Gcal$} is defined by the linear structural equation
\begin{equation}
\label{eqn:lsem}
    y = \Lambda y + \varepsilon, \qquad \text{i.e.} \qquad y_i = \sum_{j \in \pa(i)} \lambda_{ij} y_j + \epsilon_i,
\end{equation} 
where $y \in \KK^m$, and $\lambda_{ij}=0$ for $j \not\to i$ in $\mathcal{G}$. 
Directed Gaussian graphical models assume normally distributed noise $\epsilon \sim N( \mu,\Omega)$ with $\Omega$ diagonal. We assume that the variables are mean-centred, so that $\mu = 0$.
The linear relationships are recorded in the term $\Lambda y$ while the noise term is $\epsilon$. 
We refer to a directed Gaussian graphical model on a DAG as a DAG model, for short.

The vector $y$ follows a multivariate normal distribution with mean $0$ and covariance 
\begin{equation}
    \label{eqn:cov}
    \Sigma = (I - \Lambda)^{-1} \Omega (I - \Lambda)^{-\ast}
\end{equation}  
by~\eqref{eqn:lsem}, where $\Lambda$ has entries $\lambda_{ij}$ and $(\cdot)^{-\ast}$ denotes inverse conjugate transpose (which is the inverse transpose if $\KK = \RR$).
The DAG model on $\Gcal$ is
$$ \Mcal = \{ \Sigma \in \PD_m \, \vert \, \Sigma = (I - \Lambda)^{-1} \Omega (I - \Lambda)^{-\ast}, \, \lambda_{ij} = 0 \text{ unless } j \to i \text{ in $\Gcal$, } \Omega \text{ diagonal} \} .$$ 
An MLE given $Y$ in the DAG model on $\Gcal$ consists of edge weights $\Lambda$ and variance $\Omega$.

    Denote the entries of $\Omega$ by $\omega_{i}$, and recall that $\lambda_{ij}$ are the entries of $\Lambda$. The function $\ell_Y$ from~\eqref{eqn:gaussianlikelihood} can be written in terms of the parameters $\omega_{i}$ and $\lambda_{ij}$. Its negation $- \ell_Y$ is
\begin{equation}
    \label{eqn:ly_lam_and_omega}
    \sum_{i=1}^m \left( \log \omega_{i} + \frac{1}{n \omega_{i}} \| Y^{(i)} - \sum_{j \in \pa(i)} \lambda_{ij} Y^{(j)} \|^2 \right),
\end{equation} 
where $Y^{(k)}$ denotes the $k$-th column of the sample matrix $Y$ for $k \in \{1, \hdots, m \}$, see ~\cite[Theorem 4.9]{makam2021symmetries}. 
An MLE given $Y$ consists of  $\hat{\lambda}_{ij}$ and 
$\hat{\omega}_{i}$ that minimize the above expression.
The $\hat{\lambda}_{ij}$ are therefore coefficients of each $Y^{(j)}$ in the orthogonal projection of $Y^{(i)}$ onto $\langle Y^{(j)} : j \in \pa(i) \rangle$. 
The $\hat{\omega}_{i}$ are the 
residuals 
$\frac{1}{n} \| Y^{(i)} - \sum_{j \in \pa(i)} \hat{\lambda}_{ij} Y^{(j)} \|^2$, provided that the residual is strictly positive -- see the proof of~\cite[Theorem 4.9]{makam2021symmetries} or of \cite[Theorem~6.3.16]{PhDthesisPhilipp}.

We can consider maximum likelihood estimation of just the $\Lambda$ parameters or just the $\Omega$ parameters. We refer to these as the $\Lambda$-MLE and $\Omega$-MLE given $Y$, respectively. 

\begin{example}
Let $\Gcal$ be the DAG $1 \to 3 \leftarrow 2$. The DAG model on $\Gcal$ is parametrised by $\lambda = (\lambda_{31}, \lambda_{32})$ and $\omega = ( \omega_{1}, 
\omega_{2}, 
\omega_{3})$.
Fix sample matrices
$$ 
Y = \begin{pmatrix} 1 & 0 \\ 0 & 1 \\ 
1 & 1 \end{pmatrix}, \qquad 
Y' = \begin{pmatrix} 1 & 0 \\ 1 & 0 \\ 
0 & 1 \end{pmatrix}, \qquad 
Y'' = \begin{pmatrix} 1 & 0 & 0 \\ 0 & 1 & 0 \\ 
0 & 0 & 1 \end{pmatrix}.$$
The $\Lambda$-MLE given $Y$ is $( 1 , 1 )$ and the $\Omega$-MLE given $Y$ does not exist. Hence the MLE given $Y$ does not exist. The $\Omega$-MLE given $Y'$ is $(\frac12, \frac12, \frac12)$, while the $\Lambda$-MLEs are $\{ (t,-t) : t \in \KK \}$.
Finally, the $\Omega$-MLE given $Y''$ is are $(\frac13, \frac13, \frac13)$ and the $\Lambda$-MLEs are
$\alpha = (0,0)$. 
\end{example}

 \begin{proposition}
 \label{prop:lambda_and_omega}
    A $\Lambda$-MLE always exists, but may not be unique. An $\Omega$-MLE may not exist, but is unique whenever it does.   
 \end{proposition}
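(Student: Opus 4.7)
The plan is to exploit the fact that, by equation~\eqref{eqn:ly_lam_and_omega}, the function $-\ell_Y$ decomposes as a sum of $m$ independent terms indexed by the vertices of $\mathcal{G}$, where the $i$-th term depends only on the scalar $\omega_i > 0$ and on the parent-coefficients $(\lambda_{ij})_{j \in \pa(i)}$. Maximum likelihood estimation therefore reduces to $m$ independent optimisation problems, one per vertex, and the $\Lambda$-MLE and $\Omega$-MLE claims can be proved vertex-by-vertex.

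For the $\Lambda$-MLE, I would first fix an arbitrary $\omega_i > 0$ and note that the $\lambda$-dependent part of the $i$-th summand is $\tfrac{1}{n\omega_i} \| Y^{(i)} - \sum_{j \in \pa(i)} \lambda_{ij} Y^{(j)} \|^2$, with strictly positive prefactor. Minimising this over $(\lambda_{ij})_{j \in \pa(i)}$ is a linear least-squares problem, equivalent to finding the orthogonal projection of $Y^{(i)}$ onto $L_i := \langle Y^{(j)} : j \in \pa(i)\rangle$. Such a projection always exists, so a $\Lambda$-MLE exists. The optimal coefficients $\hat{\lambda}_{ij}$ are unique precisely when the vectors $\{Y^{(j)} : j \in \pa(i)\}$ are linearly independent; if they are linearly dependent (which happens in examples, as already seen with the sample $Y$ in the $1 \to 3 \leftarrow 2$ model), the solution set is a non-trivial affine subspace. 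This establishes existence and the potential failure of uniqueness of the $\Lambda$-MLE.

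For the $\Omega$-MLE, the key point is that although $(\hat{\lambda}_{ij})$ need not be unique, the residual vector $r_i := Y^{(i)} - \sum_{j \in \pa(i)} \hat{\lambda}_{ij} Y^{(j)} = Y^{(i)} - \pi_{L_i}(Y^{(i)})$ is unique, since it is the orthogonal complement of the unique projection. Substituting back into~\eqref{eqn:ly_lam_and_omega}, the remaining $\omega_i$-dependent function is $g_i(\omega_i) = \log \omega_i + \|r_i\|^2/(n \omega_i)$ on $\omega_i > 0$. Elementary calculus distinguishes two cases: if $r_i = 0$ then $g_i(\omega_i) = \log \omega_i \to -\infty$ as $\omega_i \to 0^+$, so the infimum is not attained and the $\Omega$-MLE fails to exist; if $r_i \neq 0$ then $g_i'(\omega_i) = 1/\omega_i - \|r_i\|^2/(n \omega_i^2)$ has a unique zero at $\omega_i = \|r_i\|^2/n$, which is a global minimiser by the sign of $g_i'$. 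Hence, whenever the $\Omega$-MLE exists, it is unique.

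I do not anticipate any serious obstacle: the decoupling across vertices is immediate from~\eqref{eqn:ly_lam_and_omega}, the uniqueness of the orthogonal projection underlying the residual is standard linear algebra, and the convexity analysis of $g_i$ is a one-variable calculation. The only slightly delicate point is verifying that the optimal $(\hat{\lambda}_{ij})$ can be chosen independently of $\omega_i$ — but this is clear because the prefactor $1/(n\omega_i)$ is strictly positive for every admissible $\omega_i$, so it does not influence the argmin over $\lambda$.
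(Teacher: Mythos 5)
Your proposal is correct and follows essentially the same route as the paper: the $\Lambda$-MLE claims come from the existence (and possible non-uniqueness) of the least-squares projection of $Y^{(i)}$ onto $\langle Y^{(j)} : j \in \pa(i)\rangle$, and the $\Omega$-MLE claims come from the residual formula $\hat{\omega}_i = \frac{1}{n}\|r_i\|^2$, which the paper cites from the discussion preceding the proposition and you re-derive by the one-variable calculus on $\log \omega_i + \|r_i\|^2/(n\omega_i)$. The only difference is that you spell out details the paper leaves to the cited references, so no further comment is needed.
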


 \begin{proof}
 Coefficients of each $Y^{(j)}$ in the projection of $Y^{(i)}$ onto $\langle Y^{(j)} : j \in \pa(i)\rangle$ always exist, hence a $\Lambda$-MLE always exists. 
The $\hat{\lambda}_{ij}$ are unique if and only if the submatrix of $Y$ with columns indexed by $\pa(i)$ has full colun rank.
The above residual formula for $\hat{\omega}_{i}$ shows that they are unique whenever they exist.
 \end{proof}

The existence and uniqueness of the MLE given a sample matrix $Y \in \KK^{n \times m}$ can be described by linear dependence conditions on $Y$. For a vertex $i$ in $\Gcal$ we write 
$Y^{(\pa(i))}$ for the sub-matrix of $Y$ with columns indexed by the parents of $i$ in $\Gcal$, and by $Y^{(\pa(i) \cup i)}$ the sub-matrix of $Y$ with columns indexed by $\{i\} \cup \pa(i)$.

\begin{theorem}[{See~\cite[Theorem 4.9]{makam2021symmetries} and \cite[Theorem~6.3.16]{PhDthesisPhilipp}}]
\thlabel{thm:DAG_Ymatrix}
Consider the DAG model on $\Gcal$ with $m$ vertices, and fix a sample matrix $Y \in \KK^{n \times m}$. The following possibilities characterise maximum likelihood estimation given $Y$:
   \[ \begin{matrix} \text{(a)} & \ell_Y \text{ unbounded from above}  & \Leftrightarrow & \exists \, i \in \{1,\hdots m\} \colon Y^{(i)} \in \langle Y^{(j)} : j \in \pa(i)  \rangle  \\ 
\text{(b)} & \text{MLE exists}  & \Leftrightarrow & \forall \, i \in \{1,\hdots m\} \colon Y^{(i)} \notin \langle  Y^{(j)} : j \in \pa(i)   \rangle  \\
\text{(c)} & \text{MLE exists uniquely} & \Leftrightarrow & \forall \, i \in \{1,\hdots m\} \colon Y^{(\pa(i) \cup i)} \text{ has full column rank}. \;\;\; \\ \end{matrix} \] 
\end{theorem}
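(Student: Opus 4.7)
The plan is to exploit the decoupling of the log-likelihood across vertices given by~\eqref{eqn:ly_lam_and_omega}. Write
\[
-\ell_Y(\Lambda,\Omega) = \sum_{i=1}^{m} T_i(\omega_i,\lambda_{i\cdot}), \qquad T_i := \log\omega_i + \tfrac{1}{n\omega_i}\, r_i(\lambda_{i\cdot}),
\]
where $r_i(\lambda_{i\cdot}) := \|Y^{(i)} - \sum_{j\in\pa(i)} \lambda_{ij} Y^{(j)}\|^2$ depends only on the variables attached to vertex $i$, since $\Omega$ is diagonal and the $i$-th parent coefficients $\lambda_{ij}$ appear only in $T_i$. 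Minimisation of $\ell_Y$ therefore reduces to independent minimisation of each $T_i$, and each claim in the theorem can be read off from a one-vertex analysis.

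First I would handle $T_i$ at fixed $\lambda_{i\cdot}$. Setting $r := r_i(\lambda_{i\cdot})/n$, the map $\omega_i \mapsto \log\omega_i + r/\omega_i$ is bounded below with unique minimiser $\omega_i=r$ when $r>0$, whereas if $r=0$ the expression $\log\omega_i$ tends to $-\infty$ as $\omega_i\to 0^+$. Hence $T_i$ (and thus $-\ell_Y$) is bounded below in $\omega_i$ if and only if $\inf_{\lambda_{i\cdot}} r_i(\lambda_{i\cdot}) > 0$, and since $r_i$ is a squared Euclidean distance from $Y^{(i)}$ to the affine span $\langle Y^{(j)} : j\in\pa(i)\rangle$, this infimum is strictly positive precisely when $Y^{(i)} \notin \langle Y^{(j)}:j\in\pa(i)\rangle$. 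Taking the union and intersection over $i$ yields (a) and (b).

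For (c) I would first observe that, assuming (b) holds, a minimiser of $T_i$ is obtained by first choosing $\lambda_{i\cdot}$ to minimise $r_i$ and then setting $\omega_i = r_i/n$; this $\omega_i$ is always uniquely determined and strictly positive. So uniqueness of the MLE reduces to uniqueness of the minimiser of the quadratic $\lambda_{i\cdot}\mapsto r_i(\lambda_{i\cdot})$ for each $i$. Standard linear least squares tells us this minimiser is unique iff the columns of $Y^{(\pa(i))}$ are linearly independent, i.e.\ $Y^{(\pa(i))}$ has full column rank. Combining this with the (b)-condition $Y^{(i)}\notin \langle Y^{(j)}:j\in\pa(i)\rangle$, which says exactly that appending the column $Y^{(i)}$ preserves linear independence, gives full column rank of $Y^{(\pa(i)\cup i)}$, and conversely.

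No real obstacle arises: the whole argument is an exercise in elementary calculus of one variable combined with linear least squares, and the theorem is already cited in~\cite{makam2021symmetries,PhDthesisPhilipp}. The only mildly subtle point is the equivalence in (c) between ``full column rank of $Y^{(\pa(i))}$ plus $Y^{(i)}\notin\langle Y^{(j)}:j\in\pa(i)\rangle$'' and ``full column rank of $Y^{(\pa(i)\cup i)}$'', but this is immediate by the extension-of-a-basis argument just indicated.
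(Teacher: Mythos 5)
Your proposal is correct and follows essentially the same route as the proof the paper relies on (cited from \cite{makam2021symmetries,PhDthesisPhilipp} and sketched around \eqref{eqn:ly_lam_and_omega} and Proposition~\ref{prop:lambda_and_omega}): vertex-wise decoupling of $-\ell_Y$, one-variable calculus in $\omega_i$, and linear least squares in $\lambda_{i\cdot}$, with the rank condition in (c) coming from uniqueness of the least-squares coefficients combined with the existence condition. Two small points: $\langle Y^{(j)} : j \in \pa(i) \rangle$ is a linear (not affine) span, and the attainment needed for (b) is exactly the explicit minimiser you construct at the start of your paragraph on (c), so it is covered, just slightly misplaced.
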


The above theorem uses the convention that the linear hull of the empty set is the zero vector space. In particular, if a sample matrix $Y$ has a column of zeros, then $\ell_Y$ is unbounded from above, regardless of whether the corresponding vertex has parents in $\Gcal$.
For a DAG model on $\Gcal$ the maximum likelihood threshold is
\begin{equation}
    \label{eqn:mlt_dag}
    \mlt(\Gcal) : =  \max_{i 
\in \{1, \hdots, m\}} | \pa(i) | + 1,
\end{equation}
by Theorem~\ref{thm:DAG_Ymatrix}, 
see also ~\cite[Theorem 1]{drton2019maximum}.

\begin{remark}
There is a correspondence between the existence and uniqueness of the MLE and notions of stability from Geometric Invariant Theory, see~\cite[Theorem A.2]{makam2021symmetries} and~\cite[Theorem~10.6.4]{PhDthesisPhilipp}. For a DAG model, there are three equivalences:
 \begin{equation}
     \label{eqn:stability}
     \begin{matrix}
         Y \text{ unstable} &  \Leftrightarrow &  \text{ MLE does not exist} \\ 
     Y \text{ polystable} & \Leftrightarrow & \text{MLE exists} \\
      Y \text{ stable}  & \Leftrightarrow & \text{MLE exists uniquely.}
    \end{matrix} 
 \end{equation} 
Stability is under right multiplication by the set of invertible matrices $g$ with $\det g = 1$ and 
$g_{ij} = 0$ for all $i \neq j$ with $j \not\to i$ in $\Gcal$,
see~\cite[Definition A.1]{makam2021symmetries}.
This is a group if and only if the DAG $\Gcal$ is transitive, see~\cite[Proposition 5.1]{amendola2021invariant}. A DAG is transitive if it has the property that a path $k \to j \to i$ implies the presence of an edge $k \to i$.
\end{remark}

\section{Samples with non-unique MLE} \label{sec:classification}

The MLE does not exist given $Y$ in a directed Gaussian graphical model if certain sub-matrices of $Y$ have deficient column rank, as described in Section~\ref{sec:asprelim}. There are two ways this can happen. The first is that the number of samples $n$ is too small, the second is that the columns of $Y$ are not generic. We relate these two possibilities in Section~\ref{sec:relating}. This enables us to assume without loss of generality that $n \geq m$. 

With too few samples, the MLE will not exist, and with sufficiently many generic samples, the MLE will exist and be unique. Between these extremes, different possibilities occur, which we characterise in Section~\ref{sec:tdags}. Our result holds in the setting of transitive DAGs.

\subsection{Relating too few samples to non-generic samples}
\label{sec:relating}

We relate maximum likelihood estimation when $n \leq m$ to the setting $n \geq m$. 

\begin{proposition}
\label{prop:wlog}
    Fix sample data $Y \in \KK^{n \times m}$. 
Then the MLEs given $Y$ equal the MLEs given $Z$, where 
$Z \in \KK^{kn \times m}$ is the matrix obtained from $Y$ by duplicating it vertically $k$ times.  
\end{proposition}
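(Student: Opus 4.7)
The plan is to observe that the sample covariance matrix is invariant under vertical duplication of the sample matrix, and then to use that the log-likelihood only depends on the sample through the sample covariance matrix.

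More precisely, I would proceed as follows. First, recall from~\eqref{eqn:gaussianlikelihood} that for any multivariate Gaussian model $\Mcal \subseteq \PD_m$ and any sample $Y \in \KK^{n \times m}$, the log-likelihood function (up to additive constants and a positive multiplicative scalar) is
\[
\ell_Y(\Sigma) = -\log\det(\Sigma) - \operatorname{tr}\!\bigl(\Sigma^{-1} S_Y\bigr), \qquad S_Y = \tfrac{1}{n} Y\T Y.
\]
Thus the MLEs given $Y$ depend on $Y$ only through the sample covariance matrix $S_Y$.

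Next I would show that $S_Z = S_Y$. Writing $Z \in \KK^{kn \times m}$ as the vertical stacking of $k$ copies of $Y$, a direct block computation gives
\[
Z\T Z = \sum_{j=1}^{k} Y\T Y = k\, Y\T Y.
\]
Since $Z$ has $kn$ rows, we obtain
\[
S_Z = \tfrac{1}{kn}\, Z\T Z = \tfrac{1}{kn}\cdot k\, Y\T Y = \tfrac{1}{n}\, Y\T Y = S_Y.
\]
Combining the two displays yields $\ell_Z(\Sigma) = \ell_Y(\Sigma)$ for all $\Sigma \in \PD_m$, so the two functions have the same set of maximisers on any model $\Mcal$. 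In particular, the MLEs given $Y$ coincide with the MLEs given $Z$.

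There is no real obstacle here; the entire argument is the one-line identity $S_Z = S_Y$, together with the fact that $\ell_Y$ factors through $S_Y$. The only thing worth noting is that the argument applies to arbitrary models $\Mcal \subseteq \PD_m$ (and in particular to DAG models, where one could alternatively verify the claim directly from the expression~\eqref{eqn:ly_lam_and_omega} by observing that stacking $Y$ vertically $k$ times multiplies each residual norm $\lVert Y^{(i)} - \sum_{j \in \pa(i)} \lambda_{ij} Y^{(j)} \rVert^2$ by $k$, which is then cancelled by the replacement of $n$ by $kn$ in the denominator). Either route gives the result.
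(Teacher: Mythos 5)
Your proposal is correct, and your primary argument takes a different (and more general) route than the paper's. The paper proves the statement directly from the DAG-model expression~\eqref{eqn:ly_lam_and_omega}: it treats the $\Lambda$-MLE and $\Omega$-MLE separately, noting that vertical duplication multiplies each residual norm $\| Y^{(i)} - \sum_{j \in \pa(i)} \lambda_{ij} Y^{(j)} \|^2$ by $k$ (so the minimising $\hat{\lambda}_{ij}$ are unchanged) and that the factor $k$ is cancelled by replacing $n$ with $kn$ in the residual formula for $\hat{\omega}_i$ — i.e.\ exactly the alternative you sketch in your closing remark. Your main argument instead observes $S_Z = \tfrac{1}{kn}Z\T Z = \tfrac{1}{n}Y\T Y = S_Y$, so $\ell_Z = \ell_Y$ identically, and hence the maximisers coincide; this is shorter and applies to an arbitrary Gaussian model $\Mcal \subseteq \PD_m$, not only DAG models (and works verbatim over $\CC$ with the conjugate transpose). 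One small point worth making explicit: in the DAG model an MLE is a parameter pair $(\Lambda,\Omega)$, not a covariance matrix, so you should note that the pointwise equality $\ell_Z(\Sigma)=\ell_Y(\Sigma)$ persists after composing with the parametrisation $\Sigma = (I-\Lambda)^{-1}\Omega(I-\Lambda)^{-\ast}$ of~\eqref{eqn:cov}, whence the maximising pairs $(\hat\Lambda,\hat\Omega)$ agree; this needs no injectivity of the parametrisation. What the paper's component-wise route buys, by contrast, is the slightly finer statement that the $\Lambda$-MLEs and the $\Omega$-MLEs separately agree under duplication, which matches how these objects are used later in the paper even when the full MLE fails to exist.
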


\begin{proof}
The $\Lambda$-MLEs given $Z$ are  $\hat{\lambda}_{ij}$ that minimize each
$\| Z^{(i)} - \sum_{j \in \pa(i)} \lambda_{ij} Z^{(j)} \|^2$.
Since $\| Z^{(i)} - \sum_{j \in \pa(i)} \lambda_{ij} Z^{(j)} \|^2 = k \| Y^{(i)} - \sum_{j \in \pa(i)} \lambda_{ij} Y^{(j)}\|^2$, both norms are minimized at $\hat{\lambda}_{ij}$. Hence the $\Lambda$-MLEs given $Y$ and $Z$ agree. 
The $\Omega$-MLE components $\hat{\omega}_{i}$ given $Z$ are the 
residuals 
$\frac{1}{kn} \| Z^{(i)} - \sum_{j \in \pa(i)} \hat{\lambda}_{ij} Z^{(j)} \|^2$.
We have $\frac{1}{nk} \| Z^{(i)} \|^2 = \frac{k}{nk}  \| Y^{(i)} \|^2 = \frac{1}{n} \| Y^{(i)} \|^2$. The same norm computations hold for $Z^{(i)} - \sum_{j \in \pa(i)} \hat{\lambda}_{ij} Z^{(j)}$. Hence the $\Omega$-MLEs given $Y$ and $Z$ agree.
\end{proof}

Proposition~\ref{prop:wlog} allows us to assume without loss of generality that $n \geq m$. Indeed, if $n < m$ we let $k$ be minimal such that $kn \geq m$ and replace $Y$ by $Z \in \KK^{kn \times m}$.

\subsection{Possibilities for MLE existence and uniqueness}
\label{sec:tdags}

We study all possibilities that can arise for ML estimation in transitive DAG models.
The following theorem characterizes which MLE properties can occur.
An unshielded collider is an induced subgraph $i \to j \leftarrow k$ with no edge connecting $i$ and $k$. 
Recall from~\eqref{eqn:mlt_dag} that the maximum likelihood threshold $\mlt(\Gcal)$ of a DAG is $\max_{i \in \{1,\hdots,m\}} |\pa(i)| + 1$. 
The depth $d(\Gcal)$ of a DAG is the number of arrows in a longest path in $\Gcal$. 
If $\Gcal$ is transitive then $d(\Gcal) \leq \mlt(\Gcal) - 1$.

\begin{theorem}
Let $\Gcal$ be a transitive DAG and let $n$ denote the number of samples. The MLE properties that can occur in the DAG model on $\Gcal$ are as per Table~\ref{tab:tdag}.
\begin{table}[htbp]
\begin{tabular}{c|c|c|c}
 & does not exist & exists but not unique & unique \\
\hline 
$n \leq d(\Gcal)$ & \checkmark & & \\ 
\hline 
$d(\Gcal) < n < \mlt(\Gcal)$ & \checkmark & \checkmark & \\  
\hline 
$n \geq \mlt(\Gcal)$, unshielded colliders & \checkmark & \checkmark & \checkmark \\
\hline 
$n \geq \mlt(\Gcal)$, no unshielded colliders & \checkmark & & \checkmark 
\end{tabular}
\vspace{8pt}
\caption{Possible MLE properties for transitive DAG models}
\label{tab:tdag}
\end{table}
\end{theorem}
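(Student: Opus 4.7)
The plan is to verify, for each row of Table~\ref{tab:tdag}, which of the MLE properties from \thref{thm:DAG_Ymatrix} can occur, by combining impossibility arguments with explicit sample constructions. Throughout, I write $h(i)$ for the length of the longest directed path ending at $i$ (so $h(i) \leq d(\mathcal{G})$), and let $e_1, \ldots, e_n$ denote the standard basis of $\KK^n$. Recall that ``MLE exists but not unique'' given $Y$ translates via \thref{thm:DAG_Ymatrix} into: $Y^{(i)} \notin \langle Y^{(j)} : j \in \pa(i)\rangle$ for all $i$, yet $Y^{(\pa(i))}$ is rank-deficient for some $i$.

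For the impossibility parts I would argue as follows. When $n \leq d(\mathcal{G})$, a longest chain $i_0 \to \cdots \to i_d$ produces $d+1 > n$ vectors $Y^{(i_0)}, \ldots, Y^{(i_d)} \in \KK^n$, which must be linearly dependent; the largest-index non-zero coefficient in any such relation yields $Y^{(i_{k^*})} \in \langle Y^{(i_\ell)} : \ell < k^*\rangle$, and by transitivity every $i_\ell$ with $\ell < k^*$ lies in $\pa(i_{k^*})$, so the MLE cannot exist. When $d(\mathcal{G}) < n < \mlt(\mathcal{G})$, some vertex $i$ has $|\pa(i)| \geq n$, so $Y^{(\pa(i) \cup i)}$ has more columns than rows and cannot have full column rank; uniqueness is therefore impossible. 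When $n \geq \mlt(\mathcal{G})$ and $\mathcal{G}$ has no unshielded colliders, transitivity forces $\pa(i)$ to be a totally ordered chain $j_1 \to \cdots \to j_s$ for every $i$; if $Y^{(\pa(i))}$ were rank-deficient, the largest-index non-zero coefficient of a dependence would yield $Y^{(j_{\ell^*})} \in \langle Y^{(j_\ell)} : \ell < \ell^*\rangle \subseteq \langle Y^{(w)} : w \in \pa(j_{\ell^*})\rangle$, contradicting MLE existence, ruling out ``exists but not unique''.

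For the existence parts I would exhibit explicit samples. Non-existence in every row is realised by setting one column of $Y$ equal to zero. Uniqueness in Rows~3 and~4 is realised by a generic $Y \in \KK^{n \times m}$, since $n \geq \mlt(\mathcal{G})$ implies $|\pa(i) \cup \{i\}| \leq n$, so generically $Y^{(\pa(i) \cup i)}$ has full column rank. For ``exists but not unique'' in Row~2 I would set $Y^{(i)} := e_{h(i)+1}$, valid because $n > d(\mathcal{G}) \geq h(i)$; parent columns lie in $\langle e_1, \ldots, e_{h(i)}\rangle$ while $Y^{(i)}$ does not, giving MLE existence, and at a vertex $i^*$ with $|\pa(i^*)| \geq n > d(\mathcal{G}) \geq h(i^*)$ the pigeonhole principle forces two parents to share a height and hence a column, making $Y^{(\pa(i^*))}$ rank-deficient. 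For ``exists but not unique'' in Row~3 I would fix an unshielded collider $i \to v \leftarrow j$ with $i, j$ unconnected, pick generic vectors $Y^{(u)} \in \KK^n$ for $u \neq j$, and set $Y^{(j)} := Y^{(i)}$; then $Y^{(\pa(v))}$ repeats the column $Y^{(i)}$ and is rank-deficient, while the existence conditions $Y^{(u)} \notin \langle Y^{(w)} : w \in \pa(u)\rangle$ hold generically since every parent span has dimension at most $\mlt(\mathcal{G}) - 1 \leq n - 1$.

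The main obstacle is the Row~3 ``exists but not unique'' construction, since one must simultaneously enforce the imposed dependence $Y^{(j)} = Y^{(i)}$ and verify MLE existence at every vertex. The delicate case is $u = j$, where $Y^{(j)} = Y^{(i)}$ must still lie outside $\langle Y^{(w)} : w \in \pa(j)\rangle$; this is precisely where unshieldedness enters, because $i \notin \pa(j)$ ensures that $Y^{(i)}$ is independent from the generic vectors spanning the parent columns of $j$. A secondary subtlety is in Row~4, where ruling out ``exists but not unique'' requires the implication ``no unshielded colliders $+$ transitive $\Rightarrow$ parent sets are chains'', which is the only place transitivity is genuinely used.
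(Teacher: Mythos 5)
Your proposal is correct and takes essentially the same route as the paper's proof: the chain-dependence argument for $n \le d(\Gcal)$, a column-count bound together with a height-function construction $Y^{(i)} = e_{h(i)+1}$ in the middle range (the paper uses the longest path starting at $i$ rather than ending at $i$), duplication of a column at an unshielded collider for row 3, and, for row 4, extracting from a dependence among parent columns a vertex whose column lies in the span of its own parents (the paper picks a sink of the tournament on the support of the dependence, you use the total order on $\pa(i)$ coming from transitivity). These are cosmetic variations of the same argument, and I see no gaps.
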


\begin{proof}
We use the characterisation of the existence and uniqueness of the MLE from Theorem~\ref{thm:DAG_Ymatrix}.
Define $d := d(\Gcal)$ and $\mlt: = \mlt(\Gcal)$.
By definition, there is a directed path
\begin{center}
		\begin{tikzcd}
			p_0 & p_1 \ar[l] & p_2 \ar[l] & \cdots \ar[l] & p_d \ar[l]
		\end{tikzcd}
\end{center}
in $\Gcal$. The transitivity of $\Gcal$ implies that $p_{j+1}, \ldots, p_d$ are parents of $p_j$ for all $j = 0,1,\ldots,d$.

Assume $n \leq d$. Then for any $Y \in \KK^{n \times m}$ the vectors $Y^{(p_j)} \in \KK^{n}$ for $j =0,1,\ldots,d$ are linearly dependent, since $n < d + 1$. Therefore, there is some non-trivial linear combination $\sum_j \lambda_j Y^{(p_j)} = 0$. Let $k$ be minimal such that $\lambda_k \neq 0$. Then $Y^{(p_k)}$ is a linear combination of (some of) its parent columns. Hence the MLE given $Y$ does not exist.

Next, assume $d < n < \mlt$. The MLE does not exist given almost all $Y$, by the definition of $\mlt$. 
However, the MLE does exist given  special samples, as follows.
Fix linear independent vectors $ f_0,  f_1, \ldots,  f_d \in \KK^{n}$ using $n \geq d+1$ and denote by $d(i)$ the number of arrows of a longest directed path in $\Gcal$ starting at $i$. Then $0 \leq d(i) \leq d$.  We have $d(i) = 0$ if and only if vertex $i$ is not in $\pa(j)$ for any $j$. Moreover, if $p \to i$ then $d(p) > d(i)$ by transitivity of $\Gcal$. Define $Y \in \KK^{n \times m}$ by setting $Y^{(i)} :=  f_{d(i)}$ for all $i \in \{1,\hdots,m\}$. The parent columns of $Y^{(i)} =  f_{d(i)}$ are all contained in $\{  f_{d(i) +1}, \ldots,  f_{d(\Gcal)}\}$, by construction. Thus $Y^{(i)}$ is not in the linear span of its parent columns and hence the MLE given $Y$ exists. 
Observe that there is a vertex $i$ in $\Gcal$ such that $n < 1 + |\pa(i)|$, since
$n < \mlt$. Therefore, for any $Y \in \KK^{n \times m}$ the submatrix $Y^{(i \cup \pa(i))}$ does not have full column rank, so the MLE given $Y$ is not unique.

Finally, assume $n \geq \mlt$. The MLE is unique given generic samples $Y \in \KK^{n \times m}$, by the definition of $\mlt$. 
The MLE does not exist for a matrix with a column of zeros, for example. 
It remains to see whether the MLE given $Y$ can exist but not be unique. 
If there is an unshielded collider $j \to i \leftarrow k$ in $\Gcal$, we create such a $Y$ by taking a generic $Y$ and replacing $Y^{(k)}$ by $Y^{(j)}$. 
Since $j \notin \pa(k)$ and $k \notin \pa(j)$, the MLE exists, but since two rows indexed by parents of $i$ are equal, it is not unique. 
We conclude with the case where there is no unshielded collider in $\Gcal$. Assume there is some sample matrix $Y$ such that the MLE is not unique given $Y$. By Theorem~\ref{thm:DAG_Ymatrix}(b) and~(c) there is some $i$ such that $Y^{(\pa(i))}$ does not have full column rank.
Let $J \subset \pa(i)$ denote the indexing set for those columns that appear with non-zero coefficient in a linear dependence relation among the columns of $Y^{(\pa(i))}$. Since there are no unshielded colliders in $\Gcal$, there is some $k \in J$ with $J \backslash \{ k \} \subset \pa(k)$. But then $Y^{(k)} \in \mathrm{span} \big\lbrace Y^{(j)} : j \in J \setminus \{k\} \big\rbrace \subseteq \mathrm{span} \big\lbrace Y^{(j)} : j \in \pa(k)  \big\rbrace$, which contradicts existence of the MLE.
\end{proof}

Section \ref{sec:relating} implies that we can always assume that we are in the situation where $n \geq m$, by duplicating samples enough times. So we may restrict our attention to the bottom two rows of Table~\ref{tab:tdag}. Given a sample $Y$ with non-unique MLE given $Y$, we will see in Section \ref{sec:fromcctoss} how to construct using a complete collineation a new sample $\widetilde{Y}$ with unique MLE given $\widetilde{Y}$. Then in Sections \ref{sec:MLEofsamplestab} and \ref{sec:uniqueMLEinlimit} we will relate the MLE given $\widetilde{Y}$ to the MLE(s) given $Y$, and show how $\widetilde{Y}$ can be used to resolve non-identifiability of the MLE given $Y$.

\section{From complete collineations to sample stabilisations} \label{sec:fromcctoss}

In this section we introduce the stabilisation of a sample. We call it a stabilisation because, as we will see, the MLE given any stabilisation of a sample is unique, see~\eqref{eqn:stability}. There are many ways we could obtain from a sample a new sample with unique MLE. The notion of stabilisation that we introduce here is based on complete collineations, and has the advantage that we can relate the MLE given a stabilisation to MLEs given the original sample, if they exist.  
We define sample stabilisations 
in Section \ref{subsec:samplestab}. We construct a parameter space for sample stabilisations as an algebraic variety in Section \ref{subsec:paramspace}. 

\begin{convention} \thlabel{nbiggerthanm} 
We assume $n \geq m$. This is without loss of generality, by Section \ref{sec:relating}.
\end{convention}

\subsection{Sample stabilisations from complete collineations}  \label{subsec:samplestab}

Defining sample stabilisations requires taking orthogonal complements in $\KK^n$ and $\KK^m$. To this end, we fix the standard inner products on $\KK^n$ and $\KK^m$, with $v \cdot w = \sum_{i=1}^n v_i w_i^{\ast}$ where $w_i^{\ast}$ denotes the complex conjugate.

\begin{definition}[Sample perturbations and stabilisations] \thlabel{sampleperturbandstab}
Fix a sample $f: \KK^m \to \KK^n$. A linear map $f': \KK^m \to \KK^n$ is a \emph{perturbation} of $f$, or \emph{$f$-perturbation}, if it satisfies the conditions: \begin{enumerate}[(i)]
\item $\im f'  \subseteq (\im f)^{\perp} $; \label{inclusion}
\item $(\ker f')^{\perp}  = \ker f$.  \label{equality}
\end{enumerate}
A \emph{stabilisation} of $f$, or \emph{$f$-stabilisation}, is a sum $\widetilde{f} 
 = f+ f'$, where $f'$ is an $f$-perturbation. 
\end{definition} 

Equivalently, a linear map $f': \KK^m \to \KK^n$ is an $f$-perturbation if and only if its rows and columns are orthogonal to the rows and columns of $f$, respectively, and $\dim \ker f' = \operatorname{dim} \im f$.

\begin{lemma}[\thref{firstmainresult} \ref{unique}] \thlabel{samplestabisstable}
Any stabilisation $\widetilde{f}$ of a sample $f$ has maximal rank. In particular, the MLE given $\widetilde{f}$ is unique in any DAG $\mathcal{G}$ on $m$ vertices. 
\end{lemma}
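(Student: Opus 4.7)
The plan is to show directly that $\ker \widetilde{f} = 0$, which since $n \geq m$ forces $\widetilde{f}$ to have full column rank $m$, and then invoke \thref{thm:DAG_Ymatrix}(c) to conclude uniqueness of the MLE.

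Suppose $v \in \ker \widetilde{f}$, so $f(v) + f'(v) = 0$, i.e., $f(v) = -f'(v)$. The vector $f(v)$ lies in $\im f$, while by condition~\ref{inclusion} of \thref{sampleperturbandstab} the vector $f'(v)$ lies in $\im f' \subseteq (\im f)^{\perp}$. Since the chosen inner product on $\KK^{n}$ is positive definite (Hermitian in the complex case), $\im f \cap (\im f)^{\perp} = 0$, so $f(v) = 0$ and $f'(v) = 0$. Hence $v \in \ker f \cap \ker f'$.

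Now I would invoke condition~\ref{equality}, namely $(\ker f')^{\perp} = \ker f$. This gives an orthogonal direct sum decomposition $\KK^{m} = \ker f \oplus \ker f'$, and in particular $\ker f \cap \ker f' = 0$. Therefore $v = 0$, proving $\ker \widetilde{f} = 0$. Since $\widetilde{f}: \KK^{m} \to \KK^{n}$ is injective and $n \geq m$ by \thref{nbiggerthanm}, the map $\widetilde{f}$ has maximal rank $m$.

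For the statistical conclusion, I would observe that full column rank of $\widetilde{f}$ implies full column rank of every column submatrix. In particular, for any DAG $\mathcal{G}$ on $m$ vertices and every vertex $i$, the submatrix $\widetilde{f}^{(\pa(i)\cup i)}$ has full column rank. By \thref{thm:DAG_Ymatrix}(c) the MLE given $\widetilde{f}$ exists and is unique. There is no serious obstacle: the argument is essentially a short linear-algebra computation, and the only subtlety is to make sure both positive-definiteness of the Hermitian form (for the $\KK = \CC$ case) and \thref{nbiggerthanm} are cited explicitly so that the orthogonality conditions defining an $f$-perturbation translate cleanly into maximal rank of $\widetilde{f}$.
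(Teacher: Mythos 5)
Your proof is correct and takes essentially the same route as the paper's: condition (i) of \thref{sampleperturbandstab} forces $f(v)$ and $f'(v)$ to vanish separately (since $\im f \cap (\im f)^{\perp} = 0$ for the positive definite form), and condition (ii) gives $\ker f \cap \ker f' = \ker f' \cap (\ker f')^{\perp} = 0$, so $\widetilde{f}$ is injective and, as $n \geq m$, of maximal rank; the paper phrases the identical idea via the decomposition $v = v_1 + v_2$ with $v_1 \in \ker f$, $v_2 \in (\ker f)^{\perp}$ rather than your slightly more direct image argument. Your final step, passing from full column rank of $\widetilde{f}$ to full column rank of each $\widetilde{f}^{(\pa(i)\cup i)}$ and invoking \thref{thm:DAG_Ymatrix}(c), is exactly how the paper concludes.
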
 

\begin{proof} 
Write $\widetilde{f}$ as $f +f'$ where $f'$ is an $f$-perturbation. Since $n \geq m$, we wish to show that $\widetilde{f}$ has trivial kernel. To this end suppose that $\widetilde{f}(v) = 0$ for some $v \in \KK^m$. Write $v = v_1 + v_2$ where $v_1 \in \ker f$ and $v_2 \in (\ker f)^{\perp}$. Then $\widetilde{f}(v) = f'(v_1) + f(v_2)$. By \ref{inclusion} we know that $f'(v_1) \in (\im f)^{\perp}$ therefore $\widetilde{f}(v) = 0$ if and only if $f'(v_1) = f(v_2) = 0$. By \ref{equality} we have $v_1 \in (\ker f')^{\perp}$, therefore $v_1 = 0$. Since $v_2 \in (\ker f)^{\perp}$, we also have $v_2 = 0$. Therefore $v=0$ as required. Therefore $\widetilde{f}$ has maximal rank and so the MLE given $\widetilde{f}$ is unique in the DAG model on any DAG $\Gcal$ on $m$ vertices, by \thref{thm:DAG_Ymatrix}.
\end{proof} 

We now show how sample stabilisations can be constructed from complete collineations. 

\begin{construction}[An $f$-stabilisation from a complete collineation] \thlabel{construction1} Fix a sample $f$ and consider a complete collineation $([f_1],\hdots, [f_t])$ from $\PP(\KK^m)$ to $\PP(\KK^n)$ with $[f_1] = [f ]$. Choose an affine lift $(f_1,f_2, \hdots, f_t)$ with $f_1 = f $. Each $f_i$ is a non-zero map $ \ker f_{i-1} \to \coker  f_{i-1}$, with $f_t$ the first non-degenerate map (which must be injective since we are assuming $m \leq n$). 

We first explain how to turn each map $f_i$ into a map to $\KK^n$.
Using the standard inner product on $\KK^n$, we identify $\coker  f_1$ with $(\im  f_1)^{\perp}$, a subspace of $\KK^n$. In this way we view $f_2$ as a map $\ker  f_1 \to \KK^n$. 
The standard inner product on $\KK^n$ restricts to one on $(\im  f_1)^{\perp},$ which enables us to identify $\coker  f_2$ with the orthogonal complement of $\im  f_2$ inside $(\im  f_1)^{\perp}$:
$$ \coker f_2  = \{ x \in (\im f_1)^\perp \, \vert \, \langle x, y \rangle = 0 \text{ for all } y \in \im f_2\}. $$ 
Thus we can view $f_3$ as a map $\ker  f_2 \to \KK^n$. 
Proceeding in this way, we identify each $\coker  f_i$ as the orthogonal complement of $\im  f_i$ in $(\im  f_{i-1})^{\perp}$, and thus view $f_{i+1}$ as a map $\ker  f_{i} \to \KK^n$. Note that the images of each $f_i$ have pairwise trivial intersection. 

Next we explain how to turn each map $f_i$ into a map with domain $\KK^m$.
Let $$f_3': \ker  f_1 = \ker  f_2 \oplus (\ker  f_2)^{\perp}  \to \KK^n $$ 
denote the pre-composition of $f_3$ with the projection from $\ker  f_1$ to $\ker  f_2$. In the above equation, the orthogonal complement $(\ker  f_2)^{\perp}$ is taken inside $\ker  f_1$. 
Let $$f_{i+1}': \ker  f_1 \to \KK^n$$ denote the pre-composition of $f_{i+1}$ with the sequence of projections $\ker  f_1  \twoheadrightarrow  \cdots \twoheadrightarrow \ker  f_i$. The process ends when we reach $f_t': \ker  f_1 \to \KK^n$, whose restriction to $\ker f_{t-1}$ has trivial kernel.   
Since the images of each $f_i'$ have pairwise trivial intersection, we obtain an injective map $$f_2' + \cdots + f_t': \ker f_1 \to \KK^n$$ with image contained in $(\im f_1)^{\perp}$. Pre-composing with the projection $\KK^m = \ker f_1 \oplus (\ker f_1)^{\perp} \to \ker f_1$ gives a map $f': \KK^m \to \KK^n$ with kernel $(\operatorname{ker} f_1)^{\perp}$ and image contained in $(\im f_1)^{\perp}$. 

\begin{lemma} \thlabel{fperturb}
The map $f': \KK^m \to \KK^n$ is an $f$-perturbation. 
\end{lemma}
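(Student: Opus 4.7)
The plan is to establish conditions \ref{inclusion} and \ref{equality} of \thref{sampleperturbandstab} directly from the construction, with condition \ref{inclusion} essentially built in and condition \ref{equality} reducing to the injectivity claim $f_2' + \cdots + f_t' \colon \ker f_1 \to \KK^n$ that is already asserted (but not fully justified) in Construction 5.3.

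First I would verify \ref{inclusion}. By the inductive identification of $\coker f_{i-1}$ with the orthogonal complement of $\im f_{i-1}$ inside $(\im f_{i-2})^\perp$, each $\im f_i$ for $i \geq 2$ lies in $(\im f_1)^\perp$, and these subspaces are pairwise orthogonal in $\KK^n$. Since $\im f_i' \subseteq \im f_i$ and $f' = (f_2' + \cdots + f_t') \circ \pi_{\ker f_1}$, the image of $f'$ is contained in $(\im f_1)^\perp = (\im f)^\perp$, giving \ref{inclusion}.

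Next I would prove \ref{equality} by showing the stronger equality $\ker f' = (\ker f_1)^\perp$; taking orthogonal complements then yields $(\ker f')^\perp = \ker f$. The inclusion $(\ker f_1)^\perp \subseteq \ker f'$ is immediate because $f'$ is pre-composed with the projection $\KK^m \twoheadrightarrow \ker f_1$. For the reverse, take $v \in \ker f_1$ with $f'(v) = (f_2' + \cdots + f_t')(v) = 0$. Pairwise orthogonality of the $\im f_i$ forces each $f_i'(v) = 0$ separately. I would then argue by induction on $i$ that $v \in \ker f_i$: since $v \in \ker f_1$, the composed projection $\ker f_1 \twoheadrightarrow \ker f_i$ fixes $v$ as soon as $v$ has been shown to lie in $\ker f_2, \ldots, \ker f_i$, so $f_{i+1}'(v) = f_{i+1}(v) = 0$ yields $v \in \ker f_{i+1}$. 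Iterating up to $i = t-1$ gives $f_t(v) = 0$, and injectivity of $f_t$ (which holds because $f_t$ is non-degenerate with $\dim \ker f_{t-1} \leq \dim \coker f_{t-1}$ under \thref{nbiggerthanm}) forces $v = 0$.

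The hard part is really just the careful bookkeeping in the inductive step: I need to track that each of the sequential projections defining $f_{i+1}'$ acts as the identity on $v$ once $v$ has been shown to lie in the appropriate $\ker f_j$. Once the pairwise orthogonality of the $\im f_i$ is established, everything reduces to iterated application of the definitions, with no genuinely difficult estimate. I do not expect any obstacle beyond writing this induction cleanly; in particular, no new hypothesis on the complete collineation is needed beyond what \thref{construction1} already provides.
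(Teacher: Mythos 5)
Your proof is correct and takes essentially the same route as the paper, whose proof of \thref{fperturb} simply reads the two conditions of \thref{sampleperturbandstab} off \thref{construction1}; the only difference is that you spell out the injectivity of $f_2' + \cdots + f_t'$ on $\ker f_1$ (pairwise orthogonality of the images forcing each $f_i'(v)=0$, then the induction through the kernels down to injectivity of $f_t$), which the paper asserts inside the construction without detailed justification. There is no gap in your argument.
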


\begin{proof}
By construction we have $( \ker f')^{\perp} = \ker f$ and $\im f' \subseteq (\im f)^{\perp}$. Hence both conditions of \thref{sampleperturbandstab} required for $f'$ to be an $f$-perturbation are satisfied. 
\end{proof}

By \thref{fperturb}, we set $\widetilde{f} : = f+ f'$ to obtain an $f$-stabilisation. 
\end{construction} 

There are no choices involved in this construction, beyond the standard bases and inner products on $\KK^n$ and $\KK^m$, which are set once and for all. Thus we have a canonical way of obtaining an $f$-stabilisation given a sample $f$ and an affine lift of a complete collineation with first term $[f]$. 

\begin{proposition}  \thlabel{construction}
Given a sample $f$, an affine lift $(f_1, \hdots, f_t)$ of a complete collineation from $\PP(\KK^m)$ to $\PP(\KK^n)$ with first term $f_1=f$ uniquely determines an $f$-perturbation $f'$ and an $f$-stabilisation $\widetilde{f} = f + f'$, via \thref{construction1}. 
\end{proposition}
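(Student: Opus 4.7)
The plan is to verify that every step of \thref{construction1} is canonically determined by the data $(f_1,\ldots,f_t)$ together with the once-and-for-all fixed standard inner products on $\KK^m$ and $\KK^n$; since $\widetilde{f}=f+f'$, uniqueness of the stabilisation will follow from uniqueness of $f'$, and \thref{fperturb} already supplies that $f'$ is an $f$-perturbation.

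First I would track through \thref{construction1} and flag each apparent choice. Steps where a choice might enter are: (i) the identification of $\coker f_i$ with a concrete subspace of $\KK^n$, and (ii) the projections $\ker f_1 \twoheadrightarrow \cdots \twoheadrightarrow \ker f_i$ used to promote $f_{i+1}$ to a map $f_{i+1}'$ defined on all of $\ker f_1$, and finally the projection $\KK^m \twoheadrightarrow \ker f_1$ used to extend to $\KK^m$. For (i), given the fixed inner product on $\KK^n$, the orthogonal complement $(\im f_1)^\perp$ is a uniquely determined subspace of $\KK^n$, and the induced inner product on it is unique; iterating, the identification $\coker f_i \cong (\im f_i)^\perp \cap (\im f_{i-1})^\perp$ is canonical at every stage. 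For (ii), the orthogonal projection onto a fixed subspace with respect to a fixed inner product is itself unique; since the inner products are fixed and the kernels $\ker f_i$ are determined by the $f_i$, each projection is uniquely specified.

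Next I would check that the sum $f' = f_2' + \cdots + f_t'$ appearing at the end of the construction is unambiguous. The construction notes that the images of the $f_i$ (viewed inside $\KK^n$ via the successive orthogonal-complement identifications) lie in mutually orthogonal subspaces of $\KK^n$ and hence have pairwise trivial intersection; this is a consequence of the recursive definition $\coker f_i \subseteq (\im f_{i-1})^\perp$ and is not a choice. So the sum is well-defined once the summands are, and the composition with the projection $\KK^m \twoheadrightarrow \ker f_1$ is canonical by the same orthogonality argument as before.

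Putting these observations together, the map $f'\colon \KK^m \to \KK^n$ produced by \thref{construction1} is a function only of the affine lift $(f_1,\ldots,f_t)$ and the fixed standard inner products; no arbitrary choices intervene. Hence $f'$ is uniquely determined, and therefore so is $\widetilde{f} = f + f'$. That $f'$ is actually an $f$-perturbation (i.e.\ satisfies conditions \ref{inclusion} and \ref{equality} of \thref{sampleperturbandstab}) is precisely the content of \thref{fperturb}, completing the proof. The only mildly delicate point is the bookkeeping for (i), where one must check inductively that at each stage $\coker f_i$ is identified with the orthogonal complement of $\im f_i$ inside the previously chosen $(\im f_{i-1})^\perp$, but this is exactly how the construction is set up, so no real obstacle arises.
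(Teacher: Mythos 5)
Your proposal is correct and matches the paper's own (largely implicit) justification: the paper simply observes that \thref{construction1} involves no choices beyond the fixed standard inner products on $\KK^m$ and $\KK^n$, and cites \thref{fperturb} for the fact that the resulting $f'$ is an $f$-perturbation, which is exactly the verification you carry out in more detail.
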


\begin{example}[Illustration of \thref{construction1}]
Let $$ f = f_1 = \begin{pmatrix} 1 & 0 & 0 \\
0 & 1 & 0 \\
0 & 0 & 0 \\
0 & 0 & 0 
\end{pmatrix}$$ so that $m=3$ and $n=4$. Let $\{b_1,b_2,b_3\}$ and $\{e_1,e_2,e_3,e_4\}$ denote the standard bases for $\KK^3$ and $\KK^4$ respectively. Then $\ker f = \langle b_3 \rangle$ while $(\im f)^{\perp} = \langle e_3, e_4 \rangle$. A non-zero map $f_2: \ker f \to (\im f)^{\perp}$ is of the form $b_3 \mapsto c_1 e_3 + c_2 e_4$ for some $c_1,c_2 \in \KK$ not both zero. This map is necessarily injective, so $(f_1,f_2)$ is a an affine lift of a complete collineation from $\PP(\KK^3)$ to $\PP(\KK^4)$. Then $$ 
\widetilde{f} = f + f' = \begin{pmatrix} 1 & 0 & 0 \\
0 & 1 & 0 \\
0 & 0 & c_1 \\
0 & 0 & c_2 
\end{pmatrix} . $$
\end{example}

\subsection{The parameter space of sample stabilisations}   \label{subsec:paramspace}

A perturbation of a sample $f$ is a linear map from $\KK^m$ to $\KK^n$, or alternatively an element in $X: = \KK^{n \times m}$. We describe the subvariety $X_f$ of $X$ parametrising $f$-perturbations. This is a parameter space for $f$-stabilisations. 

Fix a sample $f$ and let $r: = \operatorname{dim} \im  f$. Let $Y_f \subseteq X$ be the subspace of maps $f': \KK^m \to \KK^n$ that descend to a map $$\KK^m / (\ker  f)^{\perp} \cong \ker  f   \to (  \im  f)^{\perp}.$$ In other words, $f' \in Y_f$ if and only if the columns of $f'$ are orthogonal to the columns of $f$ and the rows of $f'$ are orthogonal to the rows of $f$. 
The space $Y_f$ is cut out by linear equations in~$X$.
Let $X_{m-r}$ be the rank $m-r$ matrices in $X$. This is a locally closed subvariety of $X$, as it is closed inside the open subvariety given by matrices of rank less than or equal to $m-r$.  Set \begin{equation} X_f : =  X_{m-r} \cap Y_f. \label{defofpsYf} \end{equation} 

 \begin{proposition}[\thref{secondmainresult}\ref{psforstab}] \thlabel{psfperturb}
Fix $f \in \KK^{n \times m}$. Then $f' \in \KK^{n \times m }$ is an $f$-perturbation if and only if $f' \in X_f$.     
 \end{proposition}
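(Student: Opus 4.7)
The plan is to unpack the two definitions carefully and observe that the equality $(\ker f')^{\perp} = \ker f$ in \thref{sampleperturbandstab}\ref{equality} splits naturally into an inclusion (which will be encoded by membership in $Y_f$) together with a dimension condition (which will be encoded by membership in $X_{m-r}$). With that observation the proof reduces to a linear-algebraic bookkeeping exercise.

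First I would translate the defining condition of $Y_f$ into orthogonality statements. A map $f' \colon \KK^m \to \KK^n$ descends to a map $\KK^m/(\ker f)^{\perp} \to (\im f)^{\perp}$ if and only if (a) $f'$ vanishes on $(\ker f)^{\perp}$, i.e.\ $(\ker f)^{\perp} \subseteq \ker f'$, equivalently $(\ker f')^{\perp} \subseteq \ker f$, and (b) $\im f' \subseteq (\im f)^{\perp}$. Condition (b) is literally condition \ref{inclusion} of \thref{sampleperturbandstab}, so the only content of the proposition concerns the kernel conditions.

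For the forward direction, suppose $f'$ is an $f$-perturbation. Condition \ref{inclusion} gives (b) immediately, and condition \ref{equality} rewritten as $\ker f' = (\ker f)^{\perp}$ gives (a), so $f' \in Y_f$. Taking dimensions in $\ker f' = (\ker f)^{\perp}$ yields $\dim \ker f' = r$, hence $\rank f' = m - r$, so $f' \in X_{m-r}$ and therefore $f' \in X_f$. For the converse, suppose $f' \in X_f$. Membership in $Y_f$ gives condition \ref{inclusion} together with the inclusion $(\ker f')^{\perp} \subseteq \ker f$. Membership in $X_{m-r}$ forces $\dim (\ker f')^{\perp} = m - \dim \ker f' = m - (m - r) = r = \dim \ker f$, so the inclusion is an equality and condition \ref{equality} holds.

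There is no real obstacle here: the content of the proposition is entirely the observation that the perturbation conditions are naturally partitioned into a linear-algebraic inclusion (captured by the linear subspace $Y_f$) and a rank constraint (captured by the locally closed stratum $X_{m-r}$), and that together these are equivalent to the equality $\ker f' = (\ker f)^{\perp}$. The only thing to be careful about is to verify that the rank is exactly $m - r$ rather than merely at most $m - r$, which is why $X_{m-r}$ is the locally closed rank stratum and not its closure.
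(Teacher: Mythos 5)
Your argument is correct and is essentially the paper's own proof: membership in $Y_f$ encodes condition (i) together with the inclusion $(\ker f')^{\perp} \subseteq \ker f$, and membership in the rank stratum $X_{m-r}$ upgrades that inclusion to the equality in condition (ii). One minor arithmetic slip in the converse: since $\dim \im f' = m-r$ you get $\dim \ker f' = r$ and hence $\dim (\ker f')^{\perp} = m-r = \dim \ker f$ (you wrote $r$ for these last two quantities); the two transpositions cancel, so the dimension count you need still holds and the conclusion is unaffected.
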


 \begin{proof} 
If $f'$ is an $f$-perturbation then it descends to a map from $\KK^m / (\ker  f)^{\perp} \cong \ker f$ to $(  \im  f)^{\perp}$, by definition. Thus it lies in $Y_f$. If $f' \in Y_f$, then it is an $f$-perturbation if and only if $\dim \ker f' = \dim \im f = r$, or equivalently if and only if $\dim \im f' = m -r$. Hence $f' \in Y_f$ lies in $X_f$ if and only if it has rank $m-r$. 
 \end{proof} 

 \begin{definition}[Parameter space of $f$-stabilisations] \thlabel{def:ps}
Given a sample $f$, the subvariety $X_f \subseteq X = \KK^{n \times m}$ defined in \eqref{defofpsYf} is the \emph{parameter space of $f$-stabilisations}.      
 \end{definition}

\begin{remark}[Link between $X_f$ and the moduli space of complete collineations]
The moduli space $\mathcal{M}$ of complete collineations from $\PP(\KK^m)$ to $\PP(\KK^n)$ can be constructed as a blow-up of $\PP(\Hom(\KK^m,\KK^n))$, see
Section \ref{subsec:pointsofms}. Hence there is a surjective morphism $$\pi: \mathcal{M} \to \PP(\Hom(\KK^m,\KK^n)),$$ 
which maps $([f_1],\hdots,[f_t])$ to $[f_1 ]$. Let $\mathcal{M}_{[f]} : = \pi^{-1}([f])$ and let $\mathcal{M}^{\operatorname{aff}}_{f}$ denote the space over $\mathcal{M}_{[f]}$ with fibre over each point  $([f_1],[f_2],\hdots,[f_t])$ given by $t-1$ copies of $\KK^{\ast} : = \KK \setminus \{0\}$, parametrising a choice of non-zero affine lift with first term $f$.
\thref{construction} then gives a map from $\mathcal{M}^{\operatorname{aff}}_{f}$ to the parameter space $X_f$ of $f$-stabilisations. 
\end{remark}

\section{MLEs given stabilisations}  \label{sec:MLEofsamplestab}

Let $\mathcal{G}$ be a connected DAG on $m$ vertices. We study the MLE given a stabilisation $\widetilde{f} : = f+ f'$ in the DAG model on $\Gcal$. We obtain necessary and sufficient conditions for the MLE given an $f$-stabilisation $\widetilde{f}$ to be an MLE given $f$, in Section \ref{subsec:uniqueMLE}.
If an MLE given $f$ does not exist, we study the analogous question for the $\Lambda$-MLE, which always exists by Proposition~\ref{prop:lambda_and_omega}.
We study which MLEs can be obtained as the MLE given a stabilisation in Section \ref{subsec:whichMLEs}.

\subsection{When is the MLE given an $f$-stabilisation an MLE given $f$?}
\label{subsec:uniqueMLE}

\begin{proposition}[When is the $\Lambda$-MLE given an $f$-stabilisation a $\Lambda$-MLE given $f$?]  \thlabel{MLEofstab}
Fix a DAG $\mathcal{G}$, a sample $f$, and an $f$-stabilisation $\widetilde{f}= f + f'$. Let $f_i$ be the columns of $f$ and $v_i$ the columns of $f'$. 
The $\Lambda$-MLE 
given $\widetilde{f}$ is a $\Lambda$-MLE given $f$ in the DAG model on $\Gcal$ if and only if  \begin{equation} \overline{f_i} +  \overline{v_i} \in \langle f_j + v_j : j \to i \rangle, \label{condition} \end{equation} 
for all child vertices $i$,
where $\overline{f_i} := \pi_{\langle f_j   :  j \to i \rangle}(f_i)$ and $\overline{v_i} := \pi_{\langle v_j  :  j \to i \rangle}(v_i)$. 
\end{proposition}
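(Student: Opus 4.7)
The plan is to work vertex-by-vertex, using the characterisation of the $\Lambda$-MLE given a sample $Y$ as the coefficients appearing in the orthogonal projection of $Y^{(i)}$ onto $\langle Y^{(j)} : j \in \pa(i)\rangle$ (recalled in Section~\ref{sec:asprelim}). Thus it suffices to fix a child vertex $i$, and show that the projection coefficients of $f_i+v_i$ onto $\langle f_j+v_j : j\to i\rangle$ are also projection coefficients of $f_i$ onto $\langle f_j : j\to i\rangle$ if and only if \eqref{condition} holds at $i$.

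The central input is the orthogonality coming from \thref{sampleperturbandstab}: since $\im f'\subseteq(\im f)^\perp$, the columns of $f$ are orthogonal to the columns of $f'$, i.e.\ $\langle f_a,v_b\rangle=0$ for all $a,b$. First I would use this to split the normal equations for the $\Lambda$-MLE given $\widetilde f$. Writing $\hat\lambda$ for the $\Lambda$-MLE given $\widetilde f$ and testing $\widetilde f_i-\sum_j\hat\lambda_{ij}\widetilde f_j$ against each $\widetilde f_k=f_k+v_k$, the cross terms vanish, leaving
$$\Bigl\langle f_i-\sum_j\hat\lambda_{ij}f_j,\,f_k\Bigr\rangle+\Bigl\langle v_i-\sum_j\hat\lambda_{ij}v_j,\,v_k\Bigr\rangle=0$$
for every $k\to i$. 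Consequently, letting $P:=\sum_j\hat\lambda_{ij}(f_j+v_j)$ denote the projection of $f_i+v_i$ onto $\langle f_j+v_j\rangle$, the orthogonal decomposition $\KK^n=\langle f_j:j\to i\rangle\oplus\langle f_j:j\to i\rangle^\perp$ shows that $\hat\lambda$ is a $\Lambda$-MLE given $f$ (i.e.\ $\sum_j\hat\lambda_{ij}f_j=\overline{f_i}$) if and only if $\sum_j\hat\lambda_{ij}v_j=\overline{v_i}$, and either equality is equivalent to $P=\overline{f_i}+\overline{v_i}$.

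Next I would establish that $P=\overline{f_i}+\overline{v_i}$ is equivalent to the membership \eqref{condition}. The forward direction is immediate since $P\in\langle f_j+v_j:j\to i\rangle$. For the converse, suppose $\overline{f_i}+\overline{v_i}=\sum_j\mu_j(f_j+v_j)$ for some scalars $\mu_j$. Using again that $\langle f_j:j\to i\rangle\perp\langle v_j:j\to i\rangle$ together with the fact that $\overline{f_i}\in\langle f_j\rangle$ and $\overline{v_i}\in\langle v_j\rangle$, the decomposition forces $\sum_j\mu_jf_j=\overline{f_i}$ and $\sum_j\mu_jv_j=\overline{v_i}$. A direct check (using the four orthogonality identities involving $f_i-\overline{f_i}$, $v_i-\overline{v_i}$, $f_k$ and $v_k$) shows that $\sum_j\mu_j(f_j+v_j)$ satisfies the normal equations for projection onto $\langle f_j+v_j\rangle$, hence equals $P$.

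Combining these two steps across all child vertices $i$ yields the equivalence stated in the proposition. The only real subtlety is the bookkeeping of the orthogonality $\langle f_a,v_b\rangle=0$ and the uniqueness of the decomposition into $f$- and $v$-parts; there is no further analytic content once this is in place. Because parents of non-child vertices are vacuous, the conditions are automatically satisfied there, which justifies quantifying only over child vertices.
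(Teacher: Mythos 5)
Your proof is correct, and at its core it exploits the same fact as the paper's proof: the columns of $f$ and $f'$ are orthogonal, so everything splits into an $f$-part in $\langle f_j : j \to i\rangle$ and a $v$-part in $\langle v_j : j \to i\rangle$. The difference is organisational: the paper works with projection identities, first showing $\pi_{\langle f_j+v_j : j\to i\rangle}(f_i+v_i)=\pi_{\langle f_j+v_j : j\to i\rangle}(\overline{f_i}+\overline{v_i})$ by composing projections, and then, for the converse direction, introducing an auxiliary vector $x=\sum_{j\to i}(\nu_{ij}-\lambda_{ij})v_j$ and showing $x\in\langle v_j : j\to i\rangle\cap\langle v_j : j\to i\rangle^{\perp}=\{0\}$. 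You instead phrase the $\Lambda_i$-MLE via the normal equations and observe that, after the cross terms vanish, the $f$-part equations hold if and only if the $v$-part equations hold; this makes the two directions symmetric, yields the useful intermediate equivalence ``$\hat\lambda$ is a $\Lambda_i$-MLE given $f$ iff $\sum_j\hat\lambda_{ij}v_j=\overline{v_i}$ iff $P=\overline{f_i}+\overline{v_i}$'', and renders the paper's separate $x=0$ argument unnecessary. Your remaining step, that $\overline{f_i}+\overline{v_i}\in\langle f_j+v_j : j\to i\rangle$ forces $P=\overline{f_i}+\overline{v_i}$, is verified correctly by checking the normal equations for the candidate $\sum_j\mu_j(f_j+v_j)$, using that $\langle f_j : j\to i\rangle$ and $\langle v_j : j\to i\rangle$ meet only in $0$. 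So the argument is sound; the only (harmless) looseness is the appeal to ``the orthogonal decomposition $\KK^n=\langle f_j : j\to i\rangle\oplus\langle f_j : j\to i\rangle^{\perp}$'', where what is really used is the directness of $\langle f_j : j\to i\rangle\oplus\langle v_j : j\to i\rangle$.
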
 

\begin{proof}
For a vertex $i$ of $\mathcal{G}$, we call the components $\lambda_{ij}$ of the $\Lambda$-MLE indexed by arrows $j \to i$ the $\Lambda_i$-MLE. 
We show that the $\Lambda_i$-MLE given $\widetilde{f}$ is a $\Lambda_i$-MLE given $f$ if and only if \eqref{condition} holds.
The $\Lambda_i$-MLE for $\widetilde{f}$ consists of coefficients $\{\lambda_{ij}\}_{j \to i}$ such that \begin{equation} \pi_{\langle f_j + v_j  :  j \to i \rangle } (f_i + v_i) = \sum_{j \to i} \lambda_{ij}(f_j + v_j).
\label{MLEdef} \end{equation}
Using the containment $ \langle f_j + v_j : j \to i \rangle \subseteq \langle f_j : j \to i \rangle \oplus \langle v_j : j \to i \rangle$, we obtain 
\begin{align*} 
\pi_{\langle f_j + v_j : j \to i \rangle}  (f_i + v_i) & = \pi_{\langle f_j + v_j : j \to i \rangle}  (\pi_{\langle f_j  : j \to i \rangle \oplus \langle v_j :  j \to i \rangle} (f_i + v_i))\\
 & = \pi_{\langle f_j + v_j : j \to i \rangle}  (\pi_{\langle f_j  : j \to i \rangle \oplus \langle v_j  : j \to i \rangle} (f_i )) +  \pi_{\langle f_j + v_j : j \to i \rangle}  (\pi_{\langle f_j  : j \to i \rangle \oplus \langle v_j  : j \to i \rangle} (v_i)) \\
& = \pi_{\langle f_j + v_j : j \to i \rangle} ( \overline{f_i} + \overline{v_i}),
\end{align*}
since $\langle v_1,\hdots, v_m \rangle$ and $\langle f_1, \hdots, f_m \rangle$ are orthogonal. 
If $\overline{f_i} + \overline{v_i} \in 
\langle  f_j + v_j : j \to i \rangle$, then
$$ \pi_{\langle f_j + v_j : j \to i \rangle}  (f_i + v_i) = \overline{f_i} + \overline{v_i}.$$
This is $\sum_{j \to i} \lambda_{ij} f_j + \sum_{j \to i} \lambda_{ij} v_j$, by \eqref{MLEdef}.
We conclude that 
$\overline{f_i} = \sum_{j \to i} \lambda_{ij} f_j$, 
again by the orthogonality of $\langle f_1, \hdots, f_m \rangle$ and $\langle v_1,\hdots, v_m \rangle$.
Hence the $\Lambda_i$-MLE for $\widetilde{f}$ is a $\Lambda_i$-MLE for $f$. 

Conversely, assume that the $\Lambda_i$-MLE for $\widetilde{f}$ is a $\Lambda_i$-MLE for $f$.
This means $\overline{f}_i = \sum_{j \to i} \lambda_{ij} f_j$ for the same cofficients $\lambda_{ij}$ as in~\eqref{MLEdef}.
Define $\overline{v}_i = \sum_{j \to i} \nu_{ij} v_j$ and $x = \sum_{j \to i} (\nu_{ij} - \lambda_{ij}) v_j$.
Then 
$$ \pi_{\langle  f_j + v_j :  j \to i \rangle}( \overline{f_i} + \overline{v_i}) =  \sum_{j \to i} \lambda_{ij}(f_j + v_j) = \sum_{j \to i} \lambda_{ij} f_j + \sum_{j \to i} \lambda_{ij} v_j = \overline{f_i} + \overline{v_i} - x. $$
We have $x \in \langle v_j : j \to i \rangle$, by definition. 
Moreover, $x \in \langle  f_j + v_j : j \to j \rangle^\perp$, 
since the projection of $\overline{f_i} + \overline{v_i}$ onto $\langle  f_j + v_j : j \to i \rangle$ differs from $\overline{f_i} + \overline{v_i}$ by $x$.
Hence $ x \cdot( f_j + v_j) = 0$ for all $j \to i$. But $ x \cdot ( f_j + v_j) =  x \cdot v_j$, since $f_j$ and $x$ are orthogonal for all $j \to i$. Hence $x \in \langle v_j : j \to i \rangle^\perp \cap \langle v_j : j \to i \rangle$ and we conclude that $x=0$.
\end{proof}

If an MLE given $f$ exists, we can ask when the MLE given an $f$-stabilisation $\widetilde{f}$ is an MLE given $f$. \thref{OmegaMLEofstab} below gives a complete answer -- this is \thref{firstmainresult}\ref{iff}.

\begin{corollary}[When is the MLE given an $f$-stabilisation an MLE given $f$?] \thlabel{OmegaMLEofstab}
Fix a DAG $\mathcal{G}$, sample $f$, and $f$-stabilisation $\widetilde{f} = f+ f'$. Let $f_i$ denote the columns of $f$ and $v_i$ the columns of $f'$. Then 
the MLE given $\widetilde{f}$ in the DAG model on $\Gcal$ is an MLE given $f$ if and only if 
 \begin{equation} v_i \in \langle v_j : j \to i \rangle \text{ and }  \overline{f_i} +  v_i \in \langle f_j + v_j : j \to i \rangle  \label{omega}
 \end{equation}
 for all child vertices $i$ of $\Gcal$, where $\overline{f_i} := \pi_{\langle f_j   :  j \to i \rangle}(f_i)$. 
\end{corollary}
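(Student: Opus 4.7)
My plan is to split the MLE matching condition into its $\Lambda$-part and $\Omega$-part and handle each separately. The $\Lambda$-part is already completely pinned down by \thref{MLEofstab}, which says the $\Lambda$-MLE given $\widetilde{f}$ is a $\Lambda$-MLE given $f$ exactly when $\overline{f_i} + \overline{v_i} \in \langle f_j + v_j : j \to i\rangle$ for every child $i$, with $\overline{v_i} := \pi_{\langle v_j : j \to i\rangle}(v_i)$. So the task reduces to identifying the extra condition needed for the $\Omega$-MLE given $\widetilde{f}$ to coincide with the $\Omega$-MLE given $f$, and then observing that, in the presence of that extra condition, the Proposition's hypothesis simplifies exactly to the condition appearing in the Corollary.

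The key tool for the $\Omega$-part is the orthogonality built into the definition of an $f$-perturbation: $\im f' \subseteq (\im f)^\perp$ gives $v_j \perp f_i$ for all $i,j$. Writing $\lambda$ for the (unique) $\Lambda$-MLE given $\widetilde{f}$, Pythagoras yields
\begin{equation*}
\bigl\| (f_i + v_i) - \sum_{j \to i} \lambda_{ij}(f_j + v_j)\bigr\|^2 = \bigl\|f_i - \sum_{j \to i} \lambda_{ij} f_j\bigr\|^2 + \bigl\|v_i - \sum_{j \to i} \lambda_{ij} v_j\bigr\|^2.
\end{equation*}
The left-hand side is $n \hat{\omega}_i$ for the $\Omega$-MLE given $\widetilde{f}$, while the first term on the right is the $\Omega$-MLE given $f$ (evaluated at the common $\lambda$). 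Hence the two $\Omega$-MLEs agree if and only if the second term vanishes, i.e.\ $v_i = \sum_{j\to i} \lambda_{ij} v_j$, which in particular forces $v_i \in \langle v_j : j \to i\rangle$.

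For the forward direction, assume the MLE given $\widetilde{f}$ is an MLE given $f$. Then \thref{MLEofstab} applies and the above Pythagorean identity forces $v_i = \sum_{j\to i}\lambda_{ij} v_j$, so $v_i \in \langle v_j : j \to i\rangle$ and $\overline{v_i} = v_i$. Substituting into the Proposition's condition yields $\overline{f_i} + v_i \in \langle f_j + v_j : j \to i\rangle$, giving \eqref{omega}. Conversely, assuming \eqref{omega}, the first condition gives $\overline{v_i} = v_i$, so the second condition matches the hypothesis of \thref{MLEofstab} and $\lambda$ is a $\Lambda$-MLE given $f$. I would then project the identity $\overline{f_i} + v_i = \sum_{j\to i}\lambda_{ij}(f_j + v_j)$ onto $\langle f_1,\ldots,f_m\rangle$ and $\langle v_1,\ldots,v_m\rangle$ separately, using orthogonality, to obtain $\overline{f_i} = \sum \lambda_{ij}f_j$ and $v_i = \sum \lambda_{ij}v_j$. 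Plugging into Pythagoras makes the two $\Omega$-MLEs agree, so the MLE given $\widetilde{f}$ coincides with an MLE given $f$.

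The only subtle point, and the one I expect needs the most care, is ensuring that the MLE given $f$ actually \emph{exists} under \eqref{omega} (otherwise the statement ``is an MLE given $f$'' would be vacuous in the reverse direction). This is where I would invoke \thref{samplestabisstable}: the MLE given $\widetilde{f}$ exists uniquely, so its residual satisfies $\hat{\omega}_i > 0$ for each child $i$. Once we know $v_i - \sum \lambda_{ij}v_j = 0$, the Pythagorean identity transfers this positivity to $\|f_i - \sum \lambda_{ij}f_j\|^2 > 0$, so $f_i \notin \langle f_j : j\to i\rangle$. By \thref{thm:DAG_Ymatrix}, the MLE given $f$ exists, making the equality of MLEs a meaningful statement.
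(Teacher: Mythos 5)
Your proposal is correct and takes essentially the same route as the paper: the $\Lambda$-part is delegated to \thref{MLEofstab}, and the $\Omega$-part is settled by the orthogonality of the $f$- and $v$-columns -- the paper phrases your Pythagorean identity as the statement that $\overline{f_i}-f_i+\overline{v_i}-v_i$ and $\overline{f_i}-f_i$ have equal norms with $\overline{v_i}-v_i$ orthogonal to $\overline{f_i}-f_i$, forcing $\overline{v_i}=v_i$. The existence of the MLE given $f$ under \eqref{omega}, which you rightly flag and prove via positivity of the residual for $\widetilde{f}$, is the one point the paper handles outside its proof, in the ``sanity check'' remark preceding it.
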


\begin{remark}[Sanity check]
It follows from \thref{OmegaMLEofstab} that if the MLE given $f$ does not exist, then the condition given in \eqref{omega} can never be satisfied by an $f$-stabilisation $\widetilde{f}$. This can be seen directly as follows. Suppose that the MLE given $f$ does not exist. Then there is some $i$ with $f_i \in \langle f_j : j \to i \rangle$, so that $\overline{f_i} = f_i$.  Suppose $\widetilde{f}$ is an $f$-stabilisation satisfying \eqref{omega}. Then $f_i  + v_i \in \langle f_j + v_j : j \to i \rangle$, contradicting the existence of the MLE given $\widetilde{f}$, by \thref{thm:DAG_Ymatrix}. 
\end{remark}

\begin{proof}[Proof of \thref{OmegaMLEofstab}]
Define the $\Lambda_i$-MLE as in the proof of \thref{MLEofstab} and similarly define the $\Omega_i$-MLE to be the component $\omega_i$ of the $\Omega$-MLE indexed by $i$. 
Suppose that the $\Lambda_i$-MLE and $\Omega_i$-MLE given $\widetilde{f}$ are a $\Lambda_i$-MLE and $\Omega_i$-MLE given $f$. Then $\overline{f_i}+ \overline{v_i} \in \langle f_j + v_j :j \to i \rangle$, 
by \thref{MLEofstab}.
The $\Omega_i$-MLE given $\widetilde{f}$ is the norm of $\pi_{\langle f_j + v_j :j\to i \rangle} (f_i + v_i) - f_i - v_i$.  
This is, equivalently, the norm of $\overline{f_i} - f_i + \overline{v_i} - v_i$
since $$\pi_{\langle f_j + v_j :j \to i \rangle} (f_i + v_i)  = \pi_{\langle f_j + v_j :j \to i \rangle} (\overline{f_i} + \overline{v_i}) = \overline{f_i} + \overline{v_i}.$$   
The $\Omega_i$-MLE given $f$ is the norm of $\overline{f_i} - f_i.$ For the MLEs to coincide, the vectors  $\overline{f_i} - f_i + \overline{v_i} - v_i$ and $\overline{f_i} - f_i$ must have the same norm. But given that $\overline{v_i} - v_i$ lies in $\langle \overline{f_i} - f_i \rangle^\perp$, this means the vectors must be equal, so that $v_i = \overline{v_i}$. Hence $v_i \in \langle v_j :j \to i \rangle$. 

For the other direction, suppose that $\overline{f_i}+ \overline{v_i} \in \langle f_j + v_j :j \to i \rangle$ and $v_i \in \langle v_j :j \to i \rangle.$ The first condition ensures that the $\Lambda_i$-MLE given $\widetilde{f}$ is a $\Lambda_i$-MLE given $f$, by \thref{MLEofstab}. It remains to show that the $\Omega_i$-MLE given $\widetilde{f}$ is the $\Omega_i$-MLE given $f$. But this follows from the fact that $v_i \in \langle v_j : j \to i \rangle$, by the same calculations as in the previous paragraph.    
\end{proof}

\begin{remark}\thlabel{examplestocome} 
It is reasonable to wonder whether there always exists a stabilisation of $f$ whose MLE is an MLE given $f$. \thref{unstableexample2} will show that this is not necessarily the case. 
\end{remark} 

\subsection{When is an MLE given $f$ the MLE given an $f$-stabilisation?} \label{subsec:whichMLEs}

\thref{OmegaMLEofstab} gives necessary and sufficient conditions for the MLE given an $f$-stabilisation to be an MLE given $f$. It is natural to ask which MLEs given $f$ are the MLE given some $f$-stabilisation. We reformulate this question geometrically, showing that it reduces to asking whether a locally closed subvariety of the parameter space $X_f$ from \thref{def:ps} is non-empty. 
As a first step, we characterise when, for a fixed MLE $\alpha$ given $f$, the MLE given an $f$-stabilisation is also $\alpha$. 

\begin{proposition} \thlabel{firststep} 
Let $\alpha$ be an MLE given $f$ in a DAG model on $\Gcal$. Let $\lambda$ denote the $\Lambda$-MLE part of $\alpha$. Fix an $f$-perturbation $f'$ and let $v_i$ denote its columns. Then the MLE given $\widetilde{f} : = f+ f'$ is $\alpha$ if and only if, for every child vertex $i$, \begin{equation}
v_i = \sum_{j \to i} \lambda_{ij} v_j.
\end{equation} 
\end{proposition}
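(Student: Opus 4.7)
The plan is to derive the equivalence by pairing \thref{OmegaMLEofstab} with the orthogonality bookkeeping already performed in the proof of \thref{MLEofstab}, and to use the uniqueness of the MLE given $\widetilde{f}$ from \thref{samplestabisstable} to upgrade ``equals some MLE given $f$'' to ``equals the prescribed MLE $\alpha$.''

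For the forward direction, suppose the MLE given $\widetilde{f}$ equals $\alpha$. Then $\alpha$ is in particular both the MLE given $\widetilde{f}$ and an MLE given $f$, so the two conditions of \thref{OmegaMLEofstab} hold: for every child vertex $i$ we have $v_i\in \langle v_j:j\to i\rangle$ (hence $\overline{v_i}=v_i$) and $\overline{f_i}+v_i\in \langle f_j+v_j:j\to i\rangle$. Because the $\Lambda$-MLE component of the MLE given $\widetilde{f}$ is $\lambda$, the coefficients realising this last membership are $\lambda_{ij}$, i.e.\ $\overline{f_i}+v_i=\sum_{j\to i}\lambda_{ij}(f_j+v_j)$. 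The orthogonality $\langle f_1,\ldots,f_m\rangle\perp\langle v_1,\ldots,v_m\rangle$ built into the definition of an $f$-perturbation (\thref{sampleperturbandstab}) lets us split this equation into its $f$-part and $v$-part; since $\overline{v_i}=v_i$, the $v$-part reads $v_i=\sum_{j\to i}\lambda_{ij}v_j$.

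For the converse, assume $v_i=\sum_{j\to i}\lambda_{ij}v_j$ for every child vertex $i$. Then $v_i\in\langle v_j:j\to i\rangle$, and since $\alpha$ is an MLE given $f$ with $\Lambda$-MLE $\lambda$ we have $\overline{f_i}=\sum_{j\to i}\lambda_{ij}f_j$, so
\begin{equation*}
\overline{f_i}+v_i=\sum_{j\to i}\lambda_{ij}(f_j+v_j)\in\langle f_j+v_j:j\to i\rangle.
\end{equation*}
Both hypotheses of \thref{OmegaMLEofstab} are met, so the MLE given $\widetilde{f}$ is some MLE given $f$. By \thref{samplestabisstable}, $\widetilde{f}$ has maximal rank, and hence so does every parent submatrix $\widetilde{f}^{(\pa(i))}$, which forces the $\Lambda$-MLE given $\widetilde{f}$ to be the unique solution of $\pi_{\langle f_j+v_j:j\to i\rangle}(f_i+v_i)=\sum_{j\to i}\lambda_{ij}'(f_j+v_j)$; the displayed identity identifies this with $\lambda$. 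The $\Omega$-MLE component then equals the squared norm of $f_i+v_i-\sum_{j\to i}\lambda_{ij}(f_j+v_j)=f_i-\sum_{j\to i}\lambda_{ij}f_j$, which coincides with the $\Omega$-MLE component of $\alpha$. Hence the MLE given $\widetilde{f}$ is exactly $\alpha$.

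The main conceptual step, already handled in \thref{OmegaMLEofstab} and \thref{MLEofstab}, is the separation of the two orthogonal ``layers'' carried by $f$ and $f'$; once this is in hand the argument is essentially a bookkeeping exercise. The only mildly delicate point is the last step, making sure that having ``an'' MLE given $f$ and the uniqueness provided by \thref{samplestabisstable} together pin down $\alpha$ rather than some other MLE given $f$; this is precisely where the assumption that $\lambda$ is the prescribed $\Lambda$-component of $\alpha$ gets used.
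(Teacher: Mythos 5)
Your proposal is correct and follows essentially the same route as the paper: both directions rest on \thref{OmegaMLEofstab} together with the projection/orthogonality computation from the proof of \thref{MLEofstab} (namely $\pi_{\langle f_j+v_j : j\to i\rangle}(f_i+v_i)=\overline{f_i}+\overline{v_i}$), with uniqueness of the $\Omega$-MLE (equivalently, your direct residual computation) pinning down $\alpha$ in the converse. The only cosmetic difference is that the paper invokes uniqueness of the $\Omega$-MLE given $f$ where you recompute the residual explicitly; the content is the same.
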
 

\begin{proof}
Suppose that $\alpha$ is the MLE given $\widetilde{f}$. Since $\alpha$ is also an MLE given $f$, we have \begin{equation*} v_i \in \langle v_j :j \to i \rangle \text{ and }  \overline{f_i} +  v_i \in \langle f_j + v_j :j \to i \rangle
 \end{equation*} for all child vertices $i$, where $\overline{f_i} := \pi_{\langle f_j : j \to i \rangle}(f_i)$, by \thref{OmegaMLEofstab}.
Since  $\lambda$ is a $\Lambda$-MLE given $f$, we know that $\overline{f_i} = \sum_{j \to i} \lambda_{ij} f_j$. Since $\lambda$ is also an $\Lambda$-MLE given $\widetilde{f}$, we have $$ \pi_{\langle f_j + v_j :j \to i \rangle}( f_i + v_i) =  \sum_{j \to i} \lambda_{ij} (f_j +  v_j) = \pi_{\langle f_j + v_j :j \to i \rangle} (\overline{f_i} + \overline{v_i} ) =  \overline{f_i} + \overline{v_i} =  \overline{f_i} + v_i .$$ It follows from orthogonality of the $f_i$ and $v_i$ that $v_i = \sum_{j \to i} \lambda_{ij} v_j$, as required.

 Conversely, suppose that $v_i = \sum_{j \to i} \lambda_{ij} v_j$ for all child vertices $i$. Then $\overline{v_i} = v_i$ and, since $\overline{f_i} = \sum_{j \to i} \lambda_{ij} f_j$, it follows that $\overline{f_i} + \overline{v_i } = \sum_{j \to i} \lambda_{ij} (f_j + v_j) \in \langle f_j + v_j :j \to i \rangle$. 
 Hence
 $$ \pi_{\langle f_j + v_j : j \to i \rangle} ( f_i + v_i) = \pi_{\langle f_j + v_j : j \to i \rangle} ( \overline{f_i}+  \overline{v_i}) = \overline{f_i} + \overline{v_i}   = \sum_{j \to i} \lambda_{ij} (f_j + v_j), $$ so that $\lambda$ is the $\Lambda$-MLE given $\widetilde{f}$.
 Since $\overline{v_i} = v_i$ and $ \overline{f_i} + \overline{v_i } = \sum_{j \to i} \lambda_{ij} (f_j + v_j) \in \langle f_j + v_j :j \to i \rangle$ for all $i$, by \thref{OmegaMLEofstab} we know that the $\Omega$-MLE given $\widetilde{f}$ is also an $\Omega$-MLE given $f$. The latter is unique. Therefore $\alpha$ is the MLE given $\widetilde{f}$.  
\end{proof}

We can use \thref{firststep} to characterise geometrically when an MLE given $f$ is the MLE given an $f$-stabilisation. Fix $\alpha$ an MLE given $f$, with $\lambda$ its $\Lambda$-MLE  component. For every vertex $i$ let $Y_{\alpha,i} \subseteq X = \KK^{n \times m}$ denote the linear subspace defined by 
\begin{equation} v_i - \sum_{j \to i} \lambda_{ij} v_j = 0. \label{explicit} 
\end{equation}  
Define $Y_{\alpha} = \bigcap_{i} Y_{\alpha,i} \subseteq X$ and \begin{equation} X_{f, \alpha} = Y_{\alpha} \cap X_f. \end{equation} By construction, $X_{f,\alpha}$ is a closed subvariety of $X_f$. Moreover, by \thref{firststep} we have that $\widetilde{f} \in X_{f, \alpha}$ if and only if the MLE given $\widetilde{f}$ is $\alpha$. We obtain the following, which is \thref{secondmainresult}\ref{ps2}. 

\begin{corollary}[When is an MLE given $f$ the MLE given an $f$-stabilisation?]
 \thlabel{MLEfromMLEofstab} Let $\alpha$ be an MLE given a sample $f$. Then $\alpha$ is the MLE given an $f$-stabilisation if and only if $X_{f, \alpha} \neq \emptyset$. 
\end{corollary}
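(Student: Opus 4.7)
The plan is to recognise this corollary as a direct geometric repackaging of Proposition \ref{firststep}, using Proposition \ref{psfperturb} to pass from the abstract notion of an $f$-perturbation to a concrete parameter space.

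First, I would fix notation: write $f' \in \KK^{n \times m}$ as the matrix with columns $v_1, \ldots, v_m$, and let $\lambda$ denote the $\Lambda$-MLE component of the chosen MLE $\alpha$. By Proposition \ref{psfperturb}, the subvariety $X_f \subseteq \KK^{n \times m}$ is precisely the locus of $f$-perturbations, and each $f' \in X_f$ determines the $f$-stabilisation $\widetilde{f} = f + f'$. So to prove the corollary it suffices to show that the extra equations cutting out $X_{f,\alpha}$ inside $X_f$ are exactly the necessary and sufficient conditions for the MLE given $\widetilde{f}$ to equal $\alpha$.

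Next I would unwind the definition of $X_{f,\alpha}$. By construction $X_{f,\alpha} = Y_\alpha \cap X_f$ where $Y_\alpha = \bigcap_i Y_{\alpha,i}$ and each $Y_{\alpha,i}$ is cut out by the linear equation \eqref{explicit}, namely $v_i - \sum_{j \to i} \lambda_{ij} v_j = 0$. For a non-child vertex $i$ the sum is empty and the equation $v_i = 0$ is either part of the system (if one chooses to impose it) or vacuous; in any case one may check that the constraint on $v_i$ coming from $Y_{\alpha,i}$ reduces to the condition stated in Proposition \ref{firststep}, which only ranges over child vertices. Thus $f' \in Y_\alpha$ if and only if, for every child vertex $i$, one has $v_i = \sum_{j \to i} \lambda_{ij} v_j$.

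Finally I would invoke Proposition \ref{firststep}: for $f' \in X_f$ (i.e.\ an $f$-perturbation), the condition $v_i = \sum_{j \to i} \lambda_{ij} v_j$ for every child vertex $i$ is equivalent to $\alpha$ being the MLE given $\widetilde{f} = f + f'$. Combining this with the previous paragraph gives $\widetilde{f} = f + f' \in X_{f,\alpha}$ if and only if the MLE given $\widetilde{f}$ is $\alpha$. Taking the contrapositive on each side, $X_{f,\alpha} = \emptyset$ if and only if no $f$-stabilisation has MLE equal to $\alpha$, which is exactly the statement of the corollary. There is no real obstacle here: all of the analytic work lives in Proposition \ref{firststep} and Proposition \ref{psfperturb}, and the only thing to check carefully is that the linear equations \eqref{explicit} for non-child vertices do not impose spurious constraints that would falsely shrink $X_{f,\alpha}$.
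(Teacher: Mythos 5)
Your argument is exactly the paper's: the corollary is obtained immediately by combining \thref{psfperturb} (identifying $X_f$ with the $f$-perturbations) with \thref{firststep} (whose equations are precisely the defining equations \eqref{explicit} of $X_{f,\alpha}$), so that $\widetilde{f}=f+f'\in X_{f,\alpha}$ if and only if the MLE given $\widetilde{f}$ is $\alpha$, and non-emptiness of $X_{f,\alpha}$ is then equivalent to $\alpha$ arising as the MLE of some $f$-stabilisation. The only point you hedge on, the equations \eqref{explicit} at non-child vertices (which read $v_i=0$ there), is an ambiguity already present in the paper's own definition of $Y_\alpha$ rather than a defect of your argument, and under the reading consistent with \thref{firststep} your proof matches the paper's.
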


\begin{definition}[Parameter space of $f$-stabilisations with MLE $\alpha$] \thlabel{defofps}
Let $f$ denote a sample and $\alpha$ an MLE given $f$. Then the closed subvariety $X_{f, \alpha} \subseteq X_f$ 
is the \emph{parameter space of $f$-stabilisations $\widetilde{f}$ such that $\alpha$ is the MLE given $\widetilde{f}$}. 
\end{definition}

The question of which MLEs given $f$ can be obtained as MLEs given an $f$-stabilisation amounts therefore to determining whether $X_{f,\alpha}$ is non-empty. We have given defining equations for $X_{f,\alpha}$ in \eqref{explicit}. This means we can apply techniques from algebraic geometry to determine whether the subvariety $X_{f, \alpha}$ is non-empty -- see \cite{Giusti1994} for $\KK=\CC$, and \cite{basu2006algorithms} for $\KK= \RR$.
We describe $X_{f,\alpha}$ explicitly for linear regression models in Section \ref{subsec:bounded}.

\section{Solutions of underdetermined linear systems using stabilisations} \label{sec:limitsolutions}

We investigate how solutions to underdetermined linear systems of a particular form can be obtained from limits of solutions to related full rank systems. As in Section \ref{sec:MLEofsamplestab} we fix the standard inner product on $\KK^n$.

Fix a matrix $A \in \KK^{n \times p}$ and a vector $b \in \KK^n$, with $n \geq p$. 
Let $\pi_A(b)$ denote the projection of $b$ onto the column space of $A$. We consider linear systems of the form $$ Ax = \pi_{A} (b).$$  
One solution is given by the pseudo-inverse
$x = A^+ \pi_A (b)$.
It is the unique solution
if and only if the matrix $A$ has full column rank, in which case it is $x = (A\T A)^{-1} A\T \pi_A(b)$.
 Note that $A^{\dagger} \pi_A(b) = A^{\dagger} b$, by properties of the pseudoinverse.
 
In this section we give two proofs of the following result.

\begin{theorem}
    \thlabel{thm:linear_system}
Let $A(\epsilon) = A + \epsilon E$ and $b(\epsilon) = b + \epsilon v$ where $A(\epsilon)$ has full column rank for each $\epsilon \neq 0$ and the columns of $A$ and $b$ are orthogonal to the columns of $E$ and to $v$. Let $x(\epsilon) = A(\epsilon)^+ \pi_{A (\epsilon)} (b(\epsilon))$. Then the limit  
$$ x: = \lim_{\epsilon \to 0} x(\epsilon)$$
exists  
and it is a solution to $Ax = \pi_A(b)$.
\end{theorem}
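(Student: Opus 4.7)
The plan is to reduce the problem to a tractable block computation by first exploiting the orthogonality hypotheses. Since $A(\epsilon)$ has full column rank for $\epsilon \neq 0$, I would write $x(\epsilon) = (A(\epsilon)^\top A(\epsilon))^{-1} A(\epsilon)^\top b(\epsilon)$, using the standard identity $A^+ \pi_A(b) = A^+ b$. The orthogonality of $A$ and $b$ to $E$ and $v$ amounts to $A^\top E = 0$, $A^\top v = 0$, and $E^\top b = 0$, which makes the cross terms in the Gram matrix and the right-hand side vanish:
\begin{align*}
A(\epsilon)^\top A(\epsilon) &= A^\top A + \epsilon^2 E^\top E, \\
A(\epsilon)^\top b(\epsilon) &= A^\top b + \epsilon^2 E^\top v.
\end{align*}
Setting $M = A^\top A$, $N = E^\top E$, $c = A^\top b$, and $w = E^\top v$, the task becomes analysing $x(\epsilon) = (M + \epsilon^2 N)^{-1}(c + \epsilon^2 w)$ as $\epsilon \to 0$.

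Next, I would decompose $\KK^p = K^\perp \oplus K$ with $K = \ker A$. With respect to this decomposition, $M$ is block diagonal with an invertible top-left block $M_{11}$ and zero bottom-right block, while $N$ decomposes symmetrically as $(N_{11}, N_{12}; N_{12}^\top, N_{22})$. Since $A^\top E = 0$ makes $Ax$ and $Ex$ orthogonal, we obtain $\ker A(\epsilon) = \ker A \cap \ker E$, and the full column rank hypothesis then forces $E$ to be injective on $K$, which is equivalent to $N_{22}$ being invertible. Moreover, $c = A^\top b \in \mathrm{im}(A^\top) = K^\perp$, so its $K$-component vanishes.

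Writing $x(\epsilon) = x_1(\epsilon) + x_2(\epsilon)$ and $w = w_1 + w_2$ according to this decomposition, the system becomes
\begin{align*}
(M_{11} + \epsilon^2 N_{11}) x_1(\epsilon) + \epsilon^2 N_{12} x_2(\epsilon) &= c + \epsilon^2 w_1, \\
\epsilon^2 N_{12}^\top x_1(\epsilon) + \epsilon^2 N_{22} x_2(\epsilon) &= \epsilon^2 w_2.
\end{align*}
Dividing the second equation by $\epsilon^2$ and eliminating $x_2(\epsilon) = N_{22}^{-1}(w_2 - N_{12}^\top x_1(\epsilon))$ via a Schur complement substitution into the first equation yields a coefficient matrix for $x_1(\epsilon)$ that converges to the invertible $M_{11}$ as $\epsilon \to 0$. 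Hence $x_1(\epsilon) \to M_{11}^{-1} c$ and $x_2(\epsilon) \to N_{22}^{-1}(w_2 - N_{12}^\top M_{11}^{-1} c)$, so the limit $x$ exists. Finally, since $x_2 \in K = \ker A$, we have $Ax = Ax_1 = A M_{11}^{-1} A^\top b$; and because the Moore--Penrose pseudoinverse satisfies $A^+ = (A^\top A)^+ A^\top$ with $(A^\top A)^+$ equal to $M_{11}^{-1}$ on $K^\perp$ and zero on $K$, we conclude $Ax = AA^+ b = \pi_A(b)$.

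The main obstacle is the bookkeeping around the $\epsilon^{-2}$ singularity: one must be careful that dividing the bottom block equation by $\epsilon^2$ is legitimate (requiring the invertibility of $N_{22}$ given by the full column rank assumption) and that the Schur complement arising from the elimination remains non-singular in the limit. An alternative route would be a generalised singular value decomposition of the pair $(A, E)$, which simultaneously diagonalises $M$ and $N$ compatibly with the decomposition $\KK^p = K^\perp \oplus K$, but the direct block-elimination argument above is the most transparent.
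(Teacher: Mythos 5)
Your argument is correct, and it is genuinely different from the one in the paper. You reduce, via the orthogonality relations $A\T E = 0$, $A\T v = 0$, $E\T b = 0$, to the normal equations $(A\T A + \epsilon^2 E\T E)\,x(\epsilon) = A\T b + \epsilon^2 E\T v$, and then do a single block elimination with respect to $\KK^p = (\ker A)^\perp \oplus \ker A$; the key points — that $N_{22}$ is positive definite because $\ker A(\epsilon) = \ker A \cap \ker E$ must vanish, that $c = A\T b$ has no $\ker A$-component, and that the Schur-complement coefficient tends to the invertible $M_{11}$ — are all correctly justified, and your direct verification $Ax = A M_{11}^{-1}A\T b = AA^+b = \pi_A(b)$ replaces the paper's separate continuity argument. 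The paper instead splits the statement: \thref{prop:if_limit} shows any limit is a solution by proving $\pi_{A(\epsilon)}(b(\epsilon)) \to \pi_A(b)$ via Grassmannian limits and rank-constancy of the pseudoinverse, and existence of the limit is proved twice, once geometrically (bounding the coefficients using the subspace-distance estimate of \thref{lemma2} together with rationality of $x(\epsilon)$ in $\epsilon$) and once algebraically (adjugate expansions, Jacobi's formula, and the induction of \thref{stronginduction}, culminating in the explicit formula of \thref{cor:limit_formula}). Your route is shorter and more transparent, and it yields a cleaner closed form for the limit, namely $x_1 = M_{11}^{-1}A\T b = A^+b$ (the minimal-norm part) plus the kernel correction $x_2 = N_{22}^{-1}(w_2 - N_{12}\T A^+ b)$, which is consistent with \thref{ex:works}; what the paper's lengthier proofs buy is machinery reused elsewhere — the limit formula of \thref{cor:limit_formula} is quoted verbatim in \thref{mainresultLambda}, and the projection-convergence argument of \thref{prop:if_limit} is reused for the $\Omega$-MLE limit in \thref{mainresult}. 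Two small caveats to record if you write this up: over $\KK = \CC$ every transpose must be a conjugate transpose (the paper makes the same remark), and you should state explicitly that $M$ has vanishing off-diagonal blocks (immediate since $A\T A$ annihilates $\ker A$ and is self-adjoint), which is what makes the second block equation exactly divisible by $\epsilon^2$.
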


We give a formula for $x:= \lim_{\epsilon \to 0} x(\epsilon)$ in \thref{cor:limit_formula}. In Section~\ref{sec:uniqueMLEinlimit} we will think of this limit solution as a  way to choose a unique $\Lambda$-MLE from a choice of infinitely many. An alternative choice of solution to $Ax = \pi_A (b)$ is the solution $x = A^+ \pi_A(b)$, which is the minimal norm solution. We will see in Section \ref{sec:linearregression} that in the special case of linear regression models, the limit solution agrees with the minimal norm solution.

The challenge in proving \thref{thm:linear_system} is that the pseudo-inverse is not necessarily a continuous function in the elements of the matrix. It is continuous if and only if $A(\epsilon)$ and $A$ have the same rank for sufficiently small $\epsilon$, see \cite{Stewart1969}. When they do not have the same rank, the limit $\operatorname{lim}_{\epsilon \to 0}(A(\epsilon))^{+}$ does not exist.   
Luckily, we are not interested in $A(\epsilon)^{+}$ and its limit but rather in $A(\epsilon)^{+} \pi_{A(\epsilon)}(b(\epsilon))$ and its limit. As we will see, multiplying by $\pi_{A(\epsilon)}(b(\epsilon))$ resolves the discontinuity to give a well-defined limit solution.

Theorem~\ref{thm:linear_system} has the following geometric interpretation. The vector $x(\epsilon) =  A(\epsilon)^{+} \pi_{A(\epsilon)}( b(\epsilon))$ gives the coefficients for the projection of $b(\epsilon)$ onto the column space of $A(\epsilon)$: $$ \pi_{A(\epsilon)}(b(\epsilon)) = \sum_{i=1}^p x_i(\epsilon) A(\epsilon)_i$$ where $x_i(\epsilon)$ denotes the $i$-th entry of $x$ and $A(\epsilon)_i$ the $i$-th column of $A(\epsilon)$. \thref{thm:linear_system} implies that these coefficients do not go off to infinity. Now we also have \begin{equation} \pi_{A(\epsilon)}(b(\epsilon)) =\pi_{A(\epsilon)} (\overline{b} + \epsilon \overline{v}), \label{keyequality}
\end{equation} where $\overline{b} = \pi_{A}(b)$ and $\overline{v} = \pi_E(v)$. This follows from the proof of \thref{MLEofstab}, since the columns of $A$ and $b$ are orthogonal to the columns of $E$ and to $v$. So instead of projecting $b(\epsilon)$ we can project $\overline{b} + \epsilon \overline{v}$, which is near the column space of $A$ for small $\epsilon$, and hence also near the column space of $A(\epsilon)$ for small $\epsilon$. Therefore \thref{thm:linear_system} says, roughly, that if we project a vector onto a subspace that is `very close' to it, the coefficients don't go off to infinity. This assumption is important because if we are projecting a vector that is `far away' from our subspace, the limit may not exist. We give two examples below to illustrate the two behaviours.

\begin{example}
\thlabel{ex:works} 
    Fix 
    $$ A(\epsilon) = \begin{pmatrix} 1 & 0 \\ 0 & 0 \end{pmatrix} +  \epsilon \begin{pmatrix} 0 & 0 \\ 0 & 1 \end{pmatrix}, \quad b(\epsilon) = \begin{pmatrix} 0 \\ 0 \end{pmatrix} + \epsilon \begin{pmatrix} 0 \\ 1\end{pmatrix}.$$ Then the conditions of \thref{thm:linear_system} are satisfied for $A(\epsilon)$ and $b(\epsilon)$ so $x(\epsilon)$ has a limit as $\epsilon \to 0$. It can be calculated as follows. 
We have $\pi_A(b) = 0$, so the system $Ax = \pi_{A}(b)$ has solutions $ce_2$ for any $c \in \KK$, where $e_2 = \begin{pmatrix} 0 & 1 \end{pmatrix}\T$. Moreover, we have $\pi_{A(\epsilon)}(b(\epsilon)) = b(\epsilon)$, so $A(\epsilon) x = \pi_{A(\epsilon)} (b(\epsilon))$ has unique solution $x(\epsilon) = e_2$.
    Thus $x(\epsilon)$ has a limit as $\epsilon \to 0$, and this is a solution to $A x = \pi_A(b)$. Note that the limit $x = e_2$ is not the solution obtained from the pseudo-inverse $A^+ b$, which is 
    the minimal norm solution $\begin{pmatrix} 0 & 0 \end{pmatrix}\T.$
\end{example}

\begin{example}
\thlabel{ex:fails}
Fix 
    $$ A(\epsilon) = A + \epsilon E := \begin{pmatrix} 1 & 0 \\ 0 & 0 \end{pmatrix} + \epsilon \begin{pmatrix} 0 & 0 \\ 0 & 1 \end{pmatrix}, \quad b(\epsilon) = \begin{pmatrix} 0 \\ 1 \end{pmatrix} + \epsilon \begin{pmatrix} 0 \\ 0 \end{pmatrix} .$$ Since $\begin{pmatrix} 0 & 1 \end{pmatrix}\T $ is not orthogonal to the second column $E$, the conditions of \thref{thm:linear_system} are not satisfied. We show that in this case $x(\epsilon)$ does not have a finite limit as $\epsilon \to 0$. 
Since $\pi_{\langle A \rangle}(b) = 0$, the system $Ax = \pi_{\langle A \rangle}(b)$ has solutions $ce_2$ for any $c \in \KK$.
Since $\pi_{A(\epsilon)}(b(\epsilon)) = b(\epsilon)$, the system $A(\epsilon) x = \pi_{A(\epsilon)} b(\epsilon)$ has unique solution $x(\epsilon) = \frac{1}{\epsilon} e_2$. This does not have a finite limit as $\epsilon \to 0$.
\end{example}

We prove the second part of \thref{thm:linear_system}, namely that whenever it exists the limit $x = \lim_{\epsilon \to 0} x(\epsilon)$ is a solution to $Ax = \pi_A(b)$, in \thref{prop:if_limit} below. 

\begin{proposition}[The limit is a solution]
\thlabel{prop:if_limit}
Let $A(\epsilon) = A+ \epsilon E$ and $b(\epsilon) = b + \epsilon v$ be as in \thref{thm:linear_system}. Let $x(\epsilon)$ denote the unique solution to $ A(\epsilon) x(\epsilon) = \pi_{A(\epsilon)}(b(\epsilon))$ for $\epsilon \neq 0$.  
Assume that $x:= \lim_{\epsilon \to 0} x(\epsilon)$ exists. Then $x$ is a solution to $Ax = \pi_A(b)$. 
\end{proposition}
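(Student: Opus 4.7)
The plan is to take limits of both sides of the defining relation
\[
A(\epsilon)\, x(\epsilon) \;=\; \pi_{A(\epsilon)}(b(\epsilon))
\]
as $\epsilon \to 0$. The left-hand side tends to $Ax$ by continuity of matrix-vector multiplication together with the hypothesis $x(\epsilon) \to x$. The work is therefore to show that the right-hand side tends to $\pi_A(b)$, since the equality $Ax = \pi_A(b)$ is exactly what is to be proved.

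For the right-hand side, I would first invoke identity \eqref{keyequality}, namely
$\pi_{A(\epsilon)}(b(\epsilon)) = \pi_{A(\epsilon)}(\overline{b} + \epsilon \overline{v})$, where $\overline{b} = \pi_A(b)$ and $\overline{v} = \pi_E(v)$. By linearity of orthogonal projection this splits as $\pi_{A(\epsilon)}(\overline{b}) + \epsilon\, \pi_{A(\epsilon)}(\overline{v})$. The second summand vanishes in the limit because projections are norm-decreasing, so $\|\epsilon\, \pi_{A(\epsilon)}(\overline{v})\| \leq \epsilon\, \|\overline{v}\| \to 0$.

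The crux is to show that $\pi_{A(\epsilon)}(\overline{b}) \to \overline{b}$ as $\epsilon \to 0$. Since $\overline{b} \in \operatorname{col}(A)$, write $\overline{b} = Ay$ for some $y \in \KK^p$. Then $A(\epsilon)\, y \in \operatorname{col}(A(\epsilon))$ and $A(\epsilon)\, y = Ay + \epsilon Ey \to \overline{b}$. Because orthogonal projection onto $\operatorname{col}(A(\epsilon))$ is the closest-point map in the standard inner product,
\[
\|\pi_{A(\epsilon)}(\overline{b}) - \overline{b}\| \;\leq\; \|A(\epsilon)\, y - \overline{b}\| \;\longrightarrow\; 0.
\]
Combining the two estimates gives $\pi_{A(\epsilon)}(b(\epsilon)) \to \overline{b} = \pi_A(b)$, which together with the identification of the left-hand limit as $Ax$ yields $Ax = \pi_A(b)$.

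The subtlety, rather than a genuine obstacle, is that the column spaces $\operatorname{col}(A(\epsilon))$ and $\operatorname{col}(A)$ typically have different dimensions precisely in the regime where the theorem is non-trivial (by assumption $A(\epsilon)$ has full column rank while the relevance of the statement requires $A$ to be rank-deficient). One might worry that projections onto a converging family of subspaces of varying dimension behave badly, as the failure of continuity of the pseudo-inverse flagged before Example \thref{ex:works} illustrates. What saves us is that we only need one-sided approximability, namely that points of $\operatorname{col}(A)$ can be approximated by points of $\operatorname{col}(A(\epsilon))$, and this follows immediately from $A(\epsilon) = A + \epsilon E$.
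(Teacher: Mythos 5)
Your proof is correct, and it takes a genuinely different and more elementary route than the paper's. The paper proves the convergence $\pi_{A(\epsilon)}(b(\epsilon)) \to \pi_A(b)$ by passing to the Grassmannian $G(p,n)$: it takes the limit subspace $L_0$ of the column spaces $L_\epsilon$ of $A(\epsilon)$ using compactness, lifts a basis of $L_0$ to bases of the $L_\epsilon$, and then invokes the continuity of the Moore--Penrose inverse along a rank-preserving family $M_\epsilon \to M_0$ (citing Ben-Israel and Stewart) to conclude $M_\epsilon M_\epsilon^{+}(\overline{b}+\epsilon\overline{v}) \to M_0 M_0^{+}\overline{b} = \overline{b}$. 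You instead use \eqref{keyequality} to replace $b(\epsilon)$ by $\overline{b}+\epsilon\overline{v}$, kill the $\epsilon\overline{v}$ term by the fact that projections are norm-decreasing, and handle the main term via the best-approximation (closest-point) characterisation of orthogonal projection: since $\overline{b}=Ay$ and $A(\epsilon)y \in \operatorname{col}(A(\epsilon))$ with $\|A(\epsilon)y-\overline{b}\| = |\epsilon|\,\|Ey\| \to 0$, you get $\|\pi_{A(\epsilon)}(\overline{b})-\overline{b}\| \to 0$ without any statement about convergence of the subspaces themselves. This buys a shorter, self-contained argument with no external continuity results and no compactness; your closing remark correctly identifies why the dimension jump is harmless, namely that only one-sided approximability of $\operatorname{col}(A)$ by $\operatorname{col}(A(\epsilon))$ is needed. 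What the paper's heavier construction buys is the explicit limit subspace $L_0$ and the matrices $M_\epsilon$, which it reuses in the geometric proof of boundedness in \thref{thm:linear_system}; your argument still delivers everything later results actually quote from this proposition (in particular the convergence $\pi_{A(\epsilon)}(b(\epsilon)) \to \pi_A(b)$ used in \thref{mainresult}), so it would serve as a drop-in replacement here.
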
 

\begin{proof}
We show that $\pi_{A(\epsilon)}(b(\epsilon))$ tends to $\pi_A (b)$ as $\epsilon \to 0$. Let $f_i$ and $v_i$ denote the columns of $A$ and $E$ respectively. 
For $\epsilon > 0$, each $\langle f_i + \epsilon v_i : 1 \leq i \leq p \rangle$       determines a point $L_{\epsilon}$ in the Grassmannian $G(p,n)$ of $p$-dimensional subspaces of $\KK^n$. 
     Since $G(p,n)$ is compact, there is a limit subspace $L_0 \in G(p,n)$ as $\epsilon \to 0$. Choose a basis $b_1^0, \hdots, b_p^0$ for $L_0$. By the geometric version of Nakayama's lemma, this basis can be lifted to a basis $b_1^{\epsilon}, \hdots, b_p^{\epsilon}$ of $L_{\epsilon}$ for small $\epsilon$. 
     
     Let $M_{\epsilon}$ be the $n \times p$ matrix with columns $b_1^{\epsilon}, \hdots, b_p^{\epsilon}$. Then $$\pi_{A(\epsilon)} (b(\epsilon)) = \pi_{A(\epsilon)}(\overline{b} + \epsilon \overline{v}) =  \pi_{L_{\epsilon}}(\overline{b} + \epsilon \overline{v}) = M_{\epsilon} M_{\epsilon}^{+} (\overline{b} + \epsilon \overline{v}).$$ Recall that the second equality follows from orthogonality of the columns of $A$ and $b$ with the columns of $E$ and $v$. 
     Now $\operatorname{lim}_{\epsilon \to 0} M_{\epsilon} = M_0$, and $M_0$ has the same rank as $M_{\epsilon}$ for $\epsilon \neq 0$. Therefore we have $\operatorname{lim}_{\epsilon \to 0} M_{\epsilon}^{+}  = M_0^{+}$, by~\cite{Israel1966} (see also \cite{Stewart1969}). Therefore \begin{align*} 
      \operatorname{lim}_{\epsilon \to 0} \pi_{A(\epsilon)} (b(\epsilon)) &  =  \operatorname{lim}_{\epsilon \to 0} M_{\epsilon} M_{\epsilon}^{+} (\overline{b} + \epsilon \overline{v}) = M_0 M_0^{+} (\overline{b}) \\
      & = \pi_{L_0}(\overline{b})  = \pi_A (\overline{b}) = \overline{b} = \pi_A(b). \qedhere \end{align*}  
\end{proof}

\begin{remark}[\thref{ex:works,ex:fails} revisited]
The choices involved in the proof of \thref{prop:if_limit} can be made explicit if we work with \thref{ex:works}. In this example we have $n,p=2$, so $G(p,n)$ consists of a single point, namely $\KK^2$. Therefore $L_{\epsilon} = L_0 = \KK^2$ for each $\epsilon \neq 0$. We can take the standard basis $\{b_1^0 = e_1,b_2^0= e^2\}$ for $L_0$. This same basis is a lift to a basis of $L_{\epsilon} = \KK^2$ for any $\epsilon$, i.e.\ we take $b_1^{\epsilon} = e_1$ and $b_2^{\epsilon}=e_2$. Then $M_{\epsilon}$ is the two by two identity matrix. 

\thref{ex:fails} shows that the condition that the columns of $A$ and $b$ are orthogonal to the columns of $E$ and $v$ in \thref{prop:if_limit} is necessary. The proof of \thref{prop:if_limit} fails for this example because it is not the case that $b$ is orthogonal to the columns of $E$, yet this condition is needed to ensure that $\pi_{A(\epsilon)}(b(\epsilon)) = \pi_{A(\epsilon)}(\overline{b} + \epsilon \overline{v}).$ In \thref{ex:fails} the left-hand side is the vector $\begin{pmatrix} 0 & \epsilon \end{pmatrix}\T $ while the right-hand side is the zero vector. 
\end{remark}

\subsection{Geometric proof}  We prove the first part of \thref{thm:linear_system}, namely that the limit $\operatorname{lim}_{\epsilon \to 0} x(\epsilon)$ exists, via a geometric argument. Recall that $x(\epsilon) $ for $\epsilon \neq 0$ is defined by the equation $$ A(\epsilon) x(\epsilon) = \pi_{A(\epsilon)}(b(\epsilon)), $$ where $A(\epsilon) = A + \epsilon E$ and $b(\epsilon) = b + \epsilon v$ with the columns of $A$ and $b$ orthogonal to the columns of $E$ and $v$. Let $f_1, \hdots, f_p$ and $v_1, \hdots, v_p$ denote the columns of $A$ and $E$ respectively. Then the coefficients $x_i(\epsilon)$ of the vector $x(\epsilon)$ satisfy \begin{equation} \pi_{A(\epsilon)}(b+ \epsilon v) = \sum_{i=1}^p x_i(\epsilon) (f_i + \epsilon v_i). \label{coefficientequation} \end{equation}  The orthogonality assumptions ensure that $\pi_{A(\epsilon)}(b+ \epsilon v) = \pi_{A(\epsilon)}(\overline{b} + \epsilon \overline{v})$, where $\overline{b} = \pi_{A}(b)$ and $\overline{v} = \pi_{E}(v)$, by \eqref{keyequality}. We can therefore assume without loss of generality that $b$ and $v$ lie in the column spaces of $A$ and $E$ respectively (if not we just work with $\overline{b}$ and $\overline{v}$ instead).

 By replacing some of the $f_i$ and $v_i$ by their negatives if necessary, we can assume that $b + \epsilon v$ sits inside the positive orthant, i.e.\ that $x_i(\epsilon) \geq 0$ for $1 \leq i \leq p$. Our aim is to show that the coefficients $x_i(\epsilon)$ are bounded. 
 This is enough to conclude that $\operatorname{lim}_{\epsilon \to 0}(x_i(\epsilon))$ exists for each $i$, by the following argument. As $A(\epsilon)$ has full rank for each $\epsilon \neq 0$, we have the following formula for $x(\epsilon)$ when $\epsilon \neq 0$: $$x(\epsilon) = (A(\epsilon)\T  A(\epsilon))^{-1} A(\epsilon)\T  \pi_{A(\epsilon)}(b(\epsilon)).$$ Since the entries of the vector on the right-hand side are rational functions in $\epsilon$, so are the coefficients of $x(\epsilon)$. But a bounded rational function in $\epsilon$ has a finite limit as $\epsilon \to 0$. 

To prove that the coefficients $x_i(\epsilon)$ are bounded above we start by applying the projection $\pi_A^{\perp} : = \pi_{\langle  f_1, \hdots,  f_p \rangle^{\perp}}$ to both sides of \eqref{coefficientequation}. This yields  
\begin{equation} \pi_{A}^{\perp} (\pi_{A(\epsilon)}(b+ \epsilon v)) = \sum_{i=1}^p x_i(\epsilon) \pi_{A}^{\perp}(f_i + \epsilon v_i) = \epsilon \sum_{i=1}^p x_i(\epsilon) \pi_{A}^{\perp}(v_i). \label{LambdajMLEprojected} \end{equation} Our aim is to show that the left-hand side of \eqref{LambdajMLEprojected} is bounded above by a quantity proportional to $\epsilon$ for small enough $\epsilon$. 
We use the following two lemmas. 

\begin{lemma} \thlabel{lemma1} 
    Let $P,S$ be subspaces of $\KK^n$ with $P \subseteq S$. 
    Then projections $\pi_S$ and $\pi_{P^{\perp}}$ commute. 
\end{lemma}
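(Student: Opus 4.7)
My plan is to exploit the orthogonal decomposition of $\KK^n$ induced by the nested subspaces $P \subseteq S$. The key elementary observation is that $P \subseteq S$ implies $S^\perp \subseteq P^\perp$, which lets me split $\KK^n$ as an orthogonal direct sum
$$\KK^n \;=\; P \,\oplus\, (S \cap P^\perp) \,\oplus\, S^\perp,$$
where each summand is orthogonal to the other two. Once this decomposition is in hand, the commutation is a straightforward bookkeeping check.

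Concretely, I would write an arbitrary $v \in \KK^n$ as $v = v_1 + v_2 + v_3$ with $v_1 \in P$, $v_2 \in S \cap P^\perp$, and $v_3 \in S^\perp$. Then I would simply compute both compositions: since $S = P \oplus (S \cap P^\perp)$, one has $\pi_S(v) = v_1 + v_2$, and since $v_1 \in P$ while $v_2 \in P^\perp$, a second projection gives $\pi_{P^\perp}(\pi_S(v)) = v_2$. For the other order, note that $v_2$ and $v_3$ both lie in $P^\perp$ (the latter because $S^\perp \subseteq P^\perp$), so $\pi_{P^\perp}(v) = v_2 + v_3$; applying $\pi_S$ kills the $S^\perp$-component and yields $\pi_S(\pi_{P^\perp}(v)) = v_2$. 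The two outputs agree, which is the claim.

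There is no real obstacle here; the only point that requires any care is justifying the triple orthogonal decomposition, and in particular the inclusion $S^\perp \subseteq P^\perp$, which is immediate from $P \subseteq S$ together with the definition of the orthogonal complement. Everything else reduces to the defining property of orthogonal projection onto an orthogonal direct sum, namely that $\pi_{A \oplus B}$ splits as the sum of the projections onto the summands when $A \perp B$. I expect the entire proof to fit in a few lines once the decomposition is stated.
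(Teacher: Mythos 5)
Your proof is correct and follows essentially the same route as the paper: both arguments reduce to identifying each composition with the orthogonal projection onto $S \cap P^{\perp}$ via the orthogonal decomposition $\KK^n = P \oplus (S \cap P^{\perp}) \oplus S^{\perp}$. The paper packages this as the identity $\pi_U \circ \pi_V = \pi_{U \cap V}$ for $V^{\perp} \subseteq U$ (applied with $U = S$, $V = P^{\perp}$) and then notes the composition is itself a projection, whereas you check both orders directly, but the content is the same.
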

\begin{proof}
    If $U,V$ are subspace of $\KK^n$ satisfying $V^\perp \subset U$, then: 
   $$\pi_U \circ \pi_{V} = \pi_{(U\cap V) \oplus V^{\perp}} \circ \pi_{V} = \pi_{U\cap V} \circ \pi_{V} + \pi_{V^{\perp}} \circ \pi_{V} = \pi_{U\cap V}.$$ Therefore the composition of projections is a projection itself (onto $U \cap V$), so the projections commute. Applying this result with $U = S$ and $V = P^{\perp}$ yields the desired result.  
\end{proof}

\begin{lemma} \thlabel{lemma2} 
   Let $S,S' \in G(p,n)$ denote two subspaces of $\KK^n$ of dimension $p$. Let \[d(S,S')=\sup_{u\in S, |u|=1} d(u,S')\] where $d(u,S')$ is the distance between $u$ and $S'$. Then if $d(S,S') < \epsilon <1/4$, for any unit vector $v \in \KK^n$ we have $|\pi_S(v) -\pi_{S'}(v)|<4\epsilon$.    
\end{lemma}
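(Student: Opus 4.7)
The plan is to decompose $v=\pi_S(v)+\pi_{S^\perp}(v)$, apply $\pi_{S'}$ by linearity, and estimate
\[
\pi_S(v)-\pi_{S'}(v) \;=\; \bigl(\pi_S(v)-\pi_{S'}(\pi_S(v))\bigr)\;-\;\pi_{S'}(\pi_{S^\perp}(v)),
\]
bounding each of the two summands by $\epsilon$ via the triangle inequality, which yields $2\epsilon<4\epsilon$ (the factor $4$ is loose for this approach, but will do). The first summand is handled immediately: $\pi_S(v)\in S$ and $|\pi_S(v)|\leq 1$, so by the definition of $d(S,S')$ applied to the unit vector $\pi_S(v)/|\pi_S(v)|$ and rescaled,
\[
|\pi_S(v)-\pi_{S'}(\pi_S(v))|=d(\pi_S(v),S')\leq d(S,S')\,|\pi_S(v)|<\epsilon.
\]

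For the second summand, let $w=\pi_{S^\perp}(v)\in S^\perp$ with $|w|\leq 1$. The orthogonal decomposition $w=\pi_{S'}(w)+\pi_{S'^\perp}(w)$ gives $|\pi_{S'}(w)|=d(w,S'^\perp)$, so it suffices to show the symmetric bound
\[
d(S^\perp,S'^\perp)\leq d(S,S')<\epsilon,
\]
from which $|\pi_{S'}(w)|\leq d(S^\perp,S'^\perp)\,|w|<\epsilon$ follows. Together with the first estimate, this yields the conclusion.

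The main obstacle, and the only place the hypothesis $\dim S=\dim S'=p$ is essential, is establishing this symmetric bound. I would do it in two steps. First, for arbitrary subspaces $A,B$ one has the identity $d(A,B)=d(B^\perp,A^\perp)$: indeed $d(A,B)$ is the operator norm of the restriction $\pi_{B^\perp}|_A\colon A\to B^\perp$, whose Hilbert-space adjoint is $\pi_A|_{B^\perp}\colon B^\perp\to A$, and adjoints have equal operator norm, which is $d(B^\perp,A^\perp)$. Applied with $A=S^\perp$, $B=S'^\perp$ this reduces the problem to showing $d(S',S)=d(S,S')$. Second, using $\dim S=\dim S'=p$, the maps $\pi_{S'}|_S\colon S\to S'$ and $\pi_S|_{S'}\colon S'\to S$ are mutually adjoint $p$-dimensional linear maps, hence share their singular values; combining this with the identity $d(S,S')^2=1-\sigma_{\min}(\pi_{S'}|_S)^2$ (which follows directly from $|u|^2=|\pi_{S'}u|^2+|\pi_{S'^\perp}u|^2$ for $u\in S$) and its $S\leftrightarrow S'$ analogue gives $d(S,S')=d(S',S)$.

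Assembling the bounds gives $|\pi_S(v)-\pi_{S'}(v)|<2\epsilon$, which is strictly less than $4\epsilon$. The hypothesis $\epsilon<1/4$ is not actually required in this route (it appears to be reserved for the applications of the lemma in Section~\ref{sec:uniqueMLEinlimit}); a less tight but more elementary approach, avoiding the adjoint/singular-value argument, would bound $d(S',S)\leq d(S,S')/(1-d(S,S'))$ via a Neumann-type inversion of $\pi_{S'}|_S$, giving $d(S',S)<\epsilon/(1-\epsilon)<4\epsilon/3$ when $\epsilon<1/4$, and hence the stated bound $4\epsilon$ exactly.
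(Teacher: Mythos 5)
Your proof is correct, and it takes a genuinely different route from the paper's. The paper argues synthetically: it picks an auxiliary point $w \in S \cap {S'}^{\perp}(v)$, bounds $|\pi_S(v)-\pi_{S'}(v)|$ by $d(w,S')+|\pi_S(v)-w|$ via the triangle inequality, and controls each term using an angle estimate ($\cos\tau\neq 0$, $|w|<2$) and a similar-triangles argument, with a separate degenerate case when one of the projections vanishes; this is where the hypothesis $\epsilon<1/4$ and the constant $4$ come from. You instead argue purely linear-algebraically: split $v=\pi_S(v)+\pi_{S^\perp}(v)$, bound the first resulting term directly from the definition of $d(S,S')$, and reduce the second to the inequality $d(S^\perp,{S'}^{\perp})\le d(S,S')$, which you obtain from the duality $d(A,B)=d(B^\perp,A^\perp)$ (the restricted projections $\pi_{B^\perp}|_A$ and $\pi_A|_{B^\perp}$ are mutually adjoint, hence have equal operator norm) together with the symmetry $d(S,S')=d(S',S)$ for equal-dimensional subspaces, proved via the shared singular values of the mutually adjoint maps $\pi_{S'}|_S$ and $\pi_S|_{S'}$ and the identity $d(S,S')^2=1-\sigma_{\min}(\pi_{S'}|_S)^2$; you correctly isolate equal dimension as the only place $\dim S=\dim S'$ is used (the quantity $d$ is not symmetric in general). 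Your route buys a sharper constant ($2\epsilon$ rather than $4\epsilon$), removes the hypothesis $\epsilon<1/4$ entirely, treats the degenerate cases uniformly, and transfers verbatim to $\KK=\CC$ where pictures of angles and similar triangles are less immediate; the paper's argument is more elementary and visual but needs the smallness assumption and case analysis. Your closing remark is also sound: since $d(S',S)\le d(S,S')/\sqrt{1-d(S,S')^2}\le d(S,S')/(1-d(S,S'))$, the cruder Neumann-style bound suffices to recover the stated constant $4$ when $\epsilon<1/4$, so even that fallback is consistent.
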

\begin{proof} Assume that $d(S,S') < \epsilon < 1/4$. Let $|v|=1$ be a unit vector, and let ${S'}^\perp(v) \subset \KK^n$ be the affine space orthogonal to $S'$ and passing through $v$. First assume that $\pi_{S'}(v)$ and $\pi_{S}(v)$ are both non-zero, and let $w \in S \cap {S'}^\perp(v)$ be any point in the intersection $S\cap {S'}^\perp(v)$, see Figure~\ref{figure:drawing}. Observe that $\pi_{S'}(v) = \pi_{S'}(w)$. Then 
\begin{equation}\label{triangleineq}
    |\pi_S(v)-\pi_{S'}(v)|\le |\pi_{S'}(v)-w|+|\pi_{S}(v)-w|= 
    d(w,S')+|\pi_{S}(v)-w|.
\end{equation}

\begin{figure}[ht!]
\centering
\includegraphics[width=70mm]{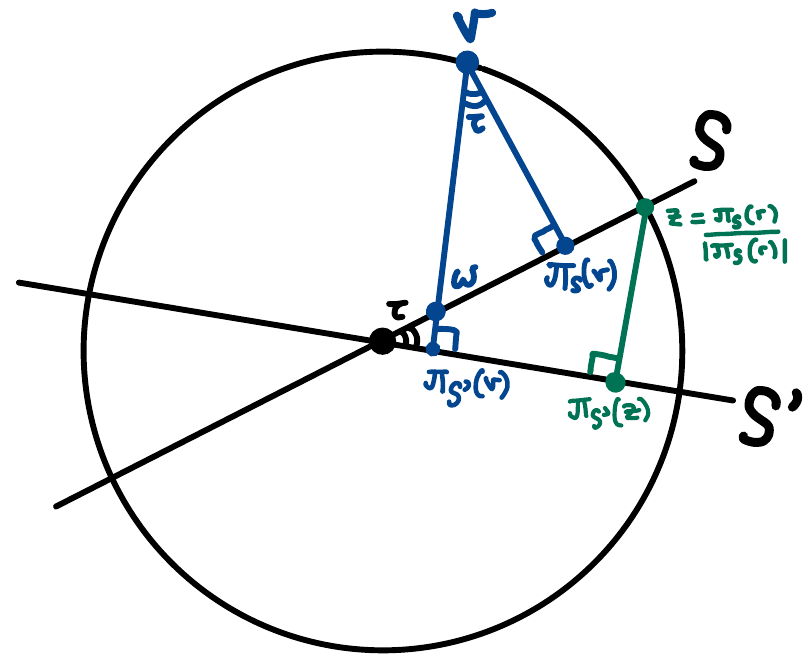}
\caption{Sketch of the set-up}
\label{figure:drawing}
\end{figure}

We wish to bound the two terms on the right-hand side of \eqref{triangleineq}. First we show that $d(w,S') < 2 \epsilon$. Let $\tau$ denote the angle formed by the subspaces $S$ and $S'$. For $\epsilon$ small enough we have $\cos (\tau) \neq 0$. Then $$ |w| =  \frac{|\pi_{S'}(w)|}{\cos(\tau)} = \frac{|\pi_{S'}(v)|}{\cos (\tau)} \leq \frac{1}{\cos(\tau)}.$$
So by choosing $\epsilon$ sufficiently small we can ensure that $|w|<2$. By definition of $d(S,S')$, $$ d(S,S') \geq d \left( \frac{w}{|w|},S' \right) = \frac{d(w,S')}{|w|}.$$ Therefore $d(w,S') = |w| d(S,S') < 2 d(S,S') < 2 \epsilon$. 
Next we show that $|\pi_S(v) - w| < 2 \epsilon$. Let $z :=\frac{\pi_S(v)}{|\pi_S(v)|}$ be the unit vector in direction $\pi_S(v)$. The two triangles 
\[(v,w,\pi_{S}(v)) \text{ and } (0,z,\pi_{S'}(z))\]
are similar as they have two equal angles -- see Figure \ref{figure:drawing}. Their scaling ratio is 
\[\frac{|v-w|}{|z|}=|v - w| \leq |v| + |w| = |w| < 2.\]
By similarity of the triangles,
\[ \frac{|\pi_S(v) - w | }{|\pi_{S'} (z) - z|} = |v-w|\] and hence $|\pi_{S}(v)-w| <  2|z-\pi_{S'}(z)|=2d(z,S)\le 2d(S,S')=2\epsilon.$ 
Hence from \eqref{triangleineq} we have $$ |\pi_S(v) - \pi_{S'}(v) | \leq d(w, S') + |\pi_S(v) - w| < 2 \epsilon + 2 \epsilon  = 4 \epsilon. $$
Finally, we consider the case where $\pi_{S'}(v)=0$ (the case $\pi_{S}(v)=0$ is similar). Assume first that $\pi_{S'}(v)=0$. Then $|\pi_S(v)-\pi_{S'}(v)|=|\pi_S(v)|$, and the triangles $(0,v,\pi_S(v))$ and $(0,z,\pi_{S'}(z))$ are congruent (isometric). Hence 
\[|\pi_S(v)|=|z-\pi_{S'}(z)|=d(z,S')<\epsilon. \qedhere \]
\end{proof}

We use \thref{lemma1,lemma2} to show that the left-hand side of \eqref{LambdajMLEprojected} is bounded above by a quantity proportional to $\epsilon$. Let $L_{\epsilon}: = \langle  f_1+\epsilon v_1, \hdots,  f_p + \epsilon v_p \rangle$ and let $L_0$ denote the limit of $L_{\epsilon}$ as $\epsilon \to 0$ in the Grassmannian $G(p,n)$ of $p$-dimensionsal subspaces of $\RR^n$. Since $\langle  f_1, \hdots,  f_p \rangle \subseteq L_0$, we can apply \thref{lemma1} with $P=\langle  f_1, \hdots,  f_p \rangle$ and $S= L_0$ to obtain \begin{equation} \pi_{A}^{\perp} (\pi_{L_0}(b+ \epsilon v)) = \pi_{L_0}(\pi_A^{\perp}(b+ \epsilon v)) = \epsilon \pi_{L_0}(\pi_{A}^{\perp}(v)), \label{commutingprojections}
\end{equation} where the second equality follows from $\pi_{A}^{\perp}(b) = 0$ (since $b$ lies in the column space of $A$ by assumption). 
Therefore by \eqref{LambdajMLEprojected} we have: \begin{align} \epsilon \sum_{i=1}^p x_i(\epsilon) \pi_A^{\perp} (v_i) & = \pi_A^{\perp} (\pi_{A(\epsilon)}(b+ \epsilon v)) \notag \\
& =  \pi_A^{\perp} (\pi_{A(\epsilon)}(b+ \epsilon v)) + \pi_A^{\perp}(\pi_{L_0}(b+ \epsilon v)) -  \pi_A^{\perp}(\pi_{L_0}(b+ \epsilon v)) \notag \\
& = \pi_A^{\perp} (\pi_{A(\epsilon)}(b+ \epsilon v)) + \epsilon \pi_{L_0}(\pi_A^{\perp}(v)) -\pi_A^{\perp}(\pi_{L_0}(b+ \epsilon v))  \text{ by \eqref{commutingprojections}} \notag \\
& = \epsilon \pi_{L_0}(\pi_A^{\perp}(v)) + \pi_A^{\perp} ((\pi_{A(\epsilon)}(b+ \epsilon v)) - \pi_{L_0}(b+ \epsilon v)). \label{finalequation}
\end{align}
We can use \thref{lemma2} to bound the norm of the difference $\pi_{A(\epsilon)}(b+ \epsilon v) - \pi_{L_0}(b+ \epsilon v) = \pi_{L_{\epsilon}}(b+ \epsilon v)) - \pi_{L_0}(b+ \epsilon v)$, by setting $S=L_{\epsilon}$ and $S' = L_0$. Note that by definition
\[d(L_{\epsilon},L_0) < d(f_i+\epsilon v_i,f_i)=\epsilon |v_i|\]
for $1\le i \le p$ and hence
\[d(L_{\epsilon},L_0)<\epsilon \max_i|v_i|\]
Then provided that $\epsilon \max_i|v_i|<\frac{1}{4}$ 
we obtain
$$ \left| \pi_{L_{\epsilon}}\left( \frac{b+ \epsilon v}{| b+ \epsilon v|}\right) - \pi_{L_0} \left( \frac{b+ \epsilon v}{| b+ \epsilon v|}\right) \right| < 4 \epsilon \max_i|v_i|$$ by \thref{lemma2}. Hence \begin{equation} |\pi_{A(\epsilon)}(b+\epsilon v) - \pi_{L_0}(b+ \epsilon v)| < 4 \epsilon | b+ \epsilon v|\max_i|v_i| < 4 \epsilon (|b|+ |v|)\max_i|v_i|. \label{inequality2}
\end{equation} 
Projecting onto the orthogonal complement of the column space of $A$ can only decrease the norm, hence we obtain from \eqref{LambdajMLEprojected}, \eqref{finalequation} and \eqref{inequality2} that \begin{equation} \pi_{A}^{\perp} (\pi_{A(\epsilon)}(b+ \epsilon v)) = \epsilon \sum_{i=1}^p x_i(\epsilon) | \pi_A^{\perp} (v_i)| < \epsilon |\pi_{L_0}(\pi_A^{\perp}(v))| + 4 \epsilon (|b|+|v|)\max_i|v_i|. \label{normestimation} \end{equation} 
Since $x_i(\epsilon) \geq 0$, it follows that the $x_i(\epsilon)$ are bounded above, with bound
\[x_i(\epsilon)\le \frac{|\pi_{L_0}(\pi_{A}^{\perp}(v))|+4(|b| + |v|)\max_i|v_i|}{\max_{1 \leq i \leq p} |\pi_A^\perp(v_i)|}.\]

\subsection{Algebraic proof}
We now prove the first part of Theorem~\ref{thm:linear_system} again, via an algebraic argument. The algebraic approach has the advantage that it also gives an explicit description of the limit. We assume in this section that $\KK= \RR$. The proof over $\KK = \CC$ is identical as long as we replace the transpose by the conjugate transpose. 

Let $A(\epsilon) = A+ \epsilon E $ and $b(\epsilon) = b + \epsilon v$ be as in \thref{thm:linear_system}.  Then the unique solution $x(\epsilon)$ to $A(\epsilon) x(\epsilon) = \pi_{A(\epsilon)} (b(\epsilon))$ is given by $$ x(\epsilon) = A(\epsilon)^+ \pi_{A(\epsilon)} (b+\epsilon v).$$ Let $f_i$ denote the columns of $A$ and $v_i$ the columns of $E$. Define $\overline{b} = \pi_{A }(b)$ and $\overline{v} = \pi_{E}(v)$. Then $ \pi_{A(\epsilon)} ( b(\epsilon)) = \pi_{A(\epsilon)} (\overline{b} + \epsilon \overline{v}),$ by \eqref{keyequality}.

Since $A(\epsilon)$ has full column rank by assumption for $\epsilon \neq 0$, its pseudo-inverse is $$ A(\epsilon)^{+} = (A(\epsilon)\T(A(\epsilon)))^{-1} A(\epsilon)\T.$$ Therefore \begin{align*}  x(\epsilon) & = (A(\epsilon)\T A(\epsilon))^{-1} A(\epsilon)\T  A(\epsilon) (A(\epsilon)\T  A(\epsilon))^{-1} A(\epsilon)\T  (\overline{b} + \epsilon \overline{v}) \\
& =  (A(\epsilon)\T A(\epsilon))^{-1} A(\epsilon)\T  (\overline{b} + \epsilon \overline{v}).
\end{align*} 
Define $C(\epsilon) := A(\epsilon)\T A(\epsilon).$  Note that $C(\epsilon) = A\T A + \epsilon^2 E\T  E$, since the columns of $A$ are orthogonal to those of $E$. Then \begin{equation} x(\epsilon) = C(\epsilon)^{-1} A(\epsilon)\T  (\overline{b} + \epsilon \overline{v}) =  \frac{1}{\det C(\epsilon)} \adj  C(\epsilon) \begin{pmatrix} 
 f_1 \cdot \overline{b} + \epsilon^2 v_1 \cdot \overline{v} \\
\vdots \\
 f_p \cdot \overline{b} + \epsilon^2 v_p \cdot \overline{v}
\end{pmatrix}.\label{equationforxe} \end{equation}

We seek an expression for $x(\epsilon)$ without powers of $\epsilon$ in the denominator. 
We begin with the case  where $\overline{b} + \overline{v}$ lies in $\langle f_i + v_i : 1 \leq i \leq p \rangle$. This is the same assumption as in \thref{MLEofstab}.  

\begin{lemma} \thlabel{simplecase}
Suppose $\overline{b} + \overline{v} \in \langle f_i + v_i :1 \leq i \leq p \rangle, $ so that $\overline{b} + \overline{v} = \sum_{i = 1}^p \mu_i (f_i + v_i)$ for some $\mu_i \in \KK$.
Let $e_i$ be the $i$-th standard basis vector in $\KK^p$. 
Then for all $\epsilon$ we have
$x(\epsilon) =    \sum_{i =1}^p \mu_i e_i$.
\end{lemma}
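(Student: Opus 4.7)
The plan is to directly verify that $y := \sum_{i=1}^p \mu_i e_i$ solves the equation $A(\epsilon) x = \pi_{A(\epsilon)}(b(\epsilon))$, and then invoke uniqueness. The key inputs are the orthogonality of $\langle f_1,\ldots,f_p\rangle$ and $\langle v_1,\ldots,v_p\rangle$ (from the assumed orthogonality of the columns of $A$ and $E$), together with the identity $\pi_{A(\epsilon)}(b(\epsilon)) = \pi_{A(\epsilon)}(\overline{b} + \epsilon \overline{v})$ from \eqref{keyequality}.

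First I would decompose the hypothesis. Since $\overline{b} = \pi_A(b)$ lies in $\langle f_1,\ldots,f_p\rangle$ and $\overline{v} = \pi_E(v)$ lies in $\langle v_1,\ldots,v_p\rangle$, and these two subspaces are orthogonal, the decomposition
\[ \overline{b} + \overline{v} \;=\; \sum_i \mu_i f_i + \sum_i \mu_i v_i \]
separates uniquely into $\overline{b} = \sum_i \mu_i f_i$ and $\overline{v} = \sum_i \mu_i v_i$.

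Next I would compute
\[ A(\epsilon)\, y \;=\; \sum_i \mu_i\, (f_i + \epsilon v_i) \;=\; \overline{b} + \epsilon \overline{v}, \]
which shows in particular that $\overline{b} + \epsilon \overline{v}$ lies in the column space of $A(\epsilon)$, so $\pi_{A(\epsilon)}(\overline{b} + \epsilon \overline{v}) = \overline{b} + \epsilon \overline{v}$. Combining this with \eqref{keyequality} gives $A(\epsilon)\, y = \pi_{A(\epsilon)}(b(\epsilon))$. For $\epsilon \neq 0$ the matrix $A(\epsilon)$ has full column rank by hypothesis, so this equation admits a unique solution, forcing $x(\epsilon) = y = \sum_{i=1}^p \mu_i e_i$.

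I do not expect any serious obstacle: the argument amounts to bookkeeping with the orthogonality assumption. The main subtlety is recognising that one should verify the defining equation directly rather than attempting to simplify the rational expression \eqref{equationforxe}, where the cancellations between the numerator and $\det C(\epsilon)$ would obscure the argument.
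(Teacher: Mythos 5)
Your argument is correct and is essentially the paper's own geometric proof: the orthogonality of the column spaces of $A$ and $E$ splits the hypothesis into $\overline{b}=\sum_i\mu_i f_i$ and $\overline{v}=\sum_i\mu_i v_i$, so that $\overline{b}+\epsilon\overline{v}=A(\epsilon)\bigl(\sum_i\mu_i e_i\bigr)$ lies in the column space of $A(\epsilon)$, the projection acts trivially on it, and full column rank for $\epsilon\neq 0$ forces $x(\epsilon)=\sum_i\mu_i e_i$. The only cosmetic differences are that you treat the general linear combination directly with an explicit uniqueness step, whereas the paper reduces to the case $\overline{b}+\overline{v}=f_i+v_i$ and additionally records an algebraic variant via the cofactor expansion of $\adj C(\epsilon)$ in \eqref{equationforxe}.
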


\begin{proof}
We can assume that $\overline{b} + \overline{v} = f_i + v_i$ for some $1 \leq i \leq p$.
The case where $\overline{b} + \overline{v}$ is a general linear combination follows similarly.
We want to prove that $x(\epsilon) = e_i$. We give both an algebraic and a geometric proof of this result. We start with the algebraic proof.

From \eqref{equationforxe} we see that the $i$-th entry of $x(\epsilon)$ is $1/\det C(\epsilon)$ times  
the dot product of the $i$-th row of $\adj C(\epsilon)$ with the $i$-th column of $C(\epsilon)$.
The latter is $\det C(\epsilon)$,
in its cofactor expansion along the $i$-th column of $C(\epsilon)$. Hence the $i$-th entry of $x(\epsilon)$ is $1$. Now take $i \neq l \in \{1, \hdots, p \}$. The $l$-th entry of $x(\epsilon)$ is $1/\det(\epsilon)$ times the dot product of the $l$-th row of $\adj C(\epsilon)$ with the $i$-th column of $C(\epsilon)$. Since $l \neq i$ this is a cofactor expansion using a different column, and therefore the expression vanishes. Hence $x(\epsilon) = e_i$. 

The geometric proof is as follows. The entries of $x(\epsilon)$ are the coefficients in front of $\{f_i + \epsilon v_i :1 \leq i \leq p\}$ in the projection of $\overline{b} + \epsilon \overline{v}$ to $\langle f_i + \epsilon v_i : 1 \leq i \leq p \rangle$. Since we are assuming $\overline{b} +  \overline{v} = f_i + v_i$, we have $\overline{b}= f_i$ and $\overline{v} = v_i$ by the orthogonality assumptions. Therefore $$\pi_{\langle f_i + \epsilon v_i : i \in \{1, \hdots, p\} \rangle}(\overline{b}+ \epsilon \overline{v}) = \pi_{\langle f_i + \epsilon v_i : i \in \{1, \hdots, p\} \rangle} (f_i + \epsilon v_i) = f_i + \epsilon v_i.$$ So $x(\epsilon) = e_i$. 
\end{proof} 

We now turn to the general case.
Since $\epsilon$ only appears in $x(\epsilon)$ with even powers, to simplify calculations we let \begin{equation} C'(\epsilon) = A\T  A + \epsilon E\T E 
\label{Cprime}
\end{equation} and consider the following vector: \begin{equation}  
x'(\epsilon) = \frac{1}{\det C'(\epsilon)} \adj C'(\epsilon)  \begin{pmatrix} 
 f_1 \cdot \overline{b} + \epsilon v_1 \cdot \overline{v} \\
\vdots \\
 f_p \cdot \overline{b} + \epsilon v_p \cdot \overline{v}
\end{pmatrix}. \label{optimalexpression2}
\end{equation}
Note that $\lim_{\epsilon \to 0} x'(\epsilon)$ exists if and only if $\lim_{\epsilon \to 0} x(\epsilon)$ exists, since $x(\epsilon) = x'(\epsilon^2)$.
We expand the polynomial $\det C'(\epsilon)$ as
$$ \det C'(\epsilon) = c_0 + c_1 \epsilon + c_2 \epsilon^2 + \cdots + c_{p} \epsilon^{p}$$ for some coefficients $c_i \in \KK$. Similarly, we write 
$$\adj C'(\epsilon)\begin{pmatrix} 
 f_1 \cdot \overline{b} + \epsilon v_1 \cdot \overline{v} \\
\vdots \\
 f_p \cdot \overline{b} + \epsilon v_p \cdot \overline{v}
\end{pmatrix} = D_0 + D_1 \epsilon  + \cdots + D_{p} \epsilon^{p}$$
for some column vectors $D_i$, 
since the entries are 
polynomials in $\epsilon$.
Then
\begin{equation} x'(\epsilon) = \frac{D_0 + D_1 \epsilon  + \cdots + D_{p} \epsilon^{p}}{ c_0 + c_1 \epsilon + \cdots + c_{p} \epsilon^{p}}.  \label{secondpolynomial}
\end{equation}
We see that $\operatorname{lim}_{\epsilon \to 0} x'(\epsilon)$ exists if and only if whenever $c_{k} = 0$ for all $k \leq l$ (for some $0 \leq l \leq p)$, we have 
 $D_{k}= 0$ for all $k \leq l$.
 
We now describe the coefficients $c_i$. First, 
$$ c_0 = \det C(0) = \det A\T A,$$ since to obtain the constant term in $\epsilon$, only the matrix $A\T A$ need be considered. We use Jacobi's formula for the derivative of a determinant to calculate
\begin{align*} 
c_1 & = \left.\frac{d}{d \epsilon}\right|_{\epsilon =0} \det C'(\epsilon)  =  \operatorname{tr}  \left( \adj C'(\epsilon) \frac{d}{d \epsilon} C(\epsilon) \right)(0) \\
 & = \operatorname{tr} ( (\adj C'(0) )E\T E ) = \operatorname{tr} ( (\adj A\T A) E\T E) .
\end{align*} 
We apply Jacobi's formula again to compute $c_2$:
$$ 
c_2  = \left.\frac{d^2}{d \epsilon^2}\right|_{\epsilon =0} \det C'(\epsilon) = \left.\frac{d}{d \epsilon}\right|_{\epsilon =0}  \left( \frac{d}{d \epsilon} \det C'(\epsilon) \right)  = \left.\frac{d}{d \epsilon}\right|_{\epsilon =0} \operatorname{tr} ( (\adj C'(\epsilon)) E\T  E ). 
$$ 
Proceeding in this way we obtain  \begin{equation} c_i = \operatorname{tr} \left(  \left( \left. \frac{d^{i-1}}{d \epsilon^{i-1}} \right|_{\epsilon = 0}  \adj C'(\epsilon) \right)  E\T  E \right), \label{ci} 
\end{equation} 
for all $i \in \{1, \hdots, p\}$.
We now turn to the coefficients $D_i$. Expanding \eqref{secondpolynomial} gives
\begin{equation}  
D_0 + D_1 \epsilon + \cdots + D_p \epsilon^p = \adj C'(\epsilon) \begin{pmatrix} 
 f_1 \cdot \overline{b} \\
\vdots \\
 f_p \cdot \overline{b}
\end{pmatrix} + \epsilon \adj C'(\epsilon) \begin{pmatrix} v_1 \cdot \overline{v} \\
\vdots \\
v_p \cdot \overline{v}
\end{pmatrix}. \label{calculatingDi}
\end{equation} 
It follows that $$D_0 = \adj C'(0) \begin{pmatrix}  f_1 \cdot \overline{b} \\
\vdots \\
 f_p \cdot \overline{b}
\end{pmatrix} = \adj A\T  A \begin{pmatrix}  f_1 \cdot \overline{b} \\
\vdots \\
 f_p \cdot \overline{b}
\end{pmatrix} .$$  The coefficient $D_1$ is the sum of the degree $1$ part of $\adj C'(\epsilon)$ multiplied by the vector with entries $f_i \cdot \overline{b}$, and the degree $0$ part of $\adj C'(\epsilon)$ multiplied by the vector with entries $v_i \cdot \overline{v}$. Therefore we have 
\begin{equation}  D_1 =  \left. \frac{d}{d \epsilon} \right|_{\epsilon =0} \adj C'(\epsilon) \begin{pmatrix}  f_1 \cdot \overline{b} \\
\vdots \\
 f_p \cdot \overline{b}
\end{pmatrix} + \adj C'(0) \begin{pmatrix} v_1 \cdot \overline{v} \\
\vdots \\
v_p \cdot \overline{v}
\end{pmatrix}. \label{D1}  
\end{equation} 
Proceeding in this way we obtain for all $i \in \{1, \hdots, p\}$ that
\begin{equation} D_i = \left. \frac{d^i}{d \epsilon^i} \right|_{\epsilon = 0} \adj C'(\epsilon)\begin{pmatrix}  f_1 \cdot \overline{b} \\
\vdots \\
 f_p \cdot \overline{b}
\end{pmatrix} + \left. \frac{d^{i-1}}{d \epsilon^{i-1}} \right|_{\epsilon = 0} \adj C'(\epsilon)\begin{pmatrix} v_1 \cdot \overline{v} \\
\vdots \\
v_p \cdot \overline{v}
\end{pmatrix}. \label{Di} \end{equation}

We want to show that if $c_i = 0$ for all $i \leq l$ (for some $0 \leq l \leq p$), then $D_i =0$ for all $ i \leq l$. The following lemma achieves this. Indeed, conditions \ref{conda} and \ref{condb} together ensure that $D_i = 0$ for all $i \leq l$, based on the expression for $D_i$ given in \eqref{Di} above.

\begin{lemma} \thlabel{stronginduction}
Fix $0 \leq l \leq p$ and suppose that $c_i = 0$ for all $i \leq l$. Then \begin{enumerate}[(a)]
\item \label{conda} $\left. \frac{d^i}{d \epsilon^{i}} \right|_{\epsilon = 0} \adj C'(\epsilon) = 0$ for all $ i < l$;  
\item \label{condb}   $\left. \frac{d^l}{d \epsilon^{l}} \right|_{\epsilon =0} \adj C'(\epsilon) A\T  A = 0.$
\end{enumerate}
\end{lemma}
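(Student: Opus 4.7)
The plan is to prove both parts of the lemma simultaneously by strong induction on $k$ for $0 \leq k \leq l$, with inductive statement
$$(P_k) \colon \ B_j = 0 \text{ for all } j < k, \quad B_k \, A\T A = 0,$$
where $B_k := \frac{1}{k!} \left. \frac{d^k}{d\epsilon^k} \right|_{\epsilon=0} \adj C'(\epsilon)$. The lemma is then precisely $(P_l)$. Expanding both sides of $\adj C'(\epsilon)\, C'(\epsilon) = C'(\epsilon)\, \adj C'(\epsilon) = \det C'(\epsilon)\, I$ as power series in $\epsilon$, using $C'(\epsilon) = A\T A + \epsilon E\T E$, and comparing coefficients of $\epsilon^k$ yields the companion recursions
$$B_k\, A\T A + B_{k-1}\, E\T E = c_k I \quad \text{and} \quad A\T A\, B_k + E\T E\, B_{k-1} = c_k I,$$
with $B_{-1} := 0$. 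The base case $k = 0$ is immediate: $B_0 \, A\T A = \adj(A\T A)\, A\T A = \det(A\T A)\, I = c_0 I = 0$.

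For the inductive step, assume $(P_k)$ with $k < l$; the new content required for $(P_{k+1})$ is $B_k = 0$ and $B_{k+1}\, A\T A = 0$. Each $B_k$ is symmetric, as a Taylor coefficient of $\adj C'(\epsilon)$ which is symmetric for all $\epsilon$. Combined with $B_k\, A\T A = 0$ from $(P_k)$, symmetry yields $A\T A\, B_k = 0$, equivalently $A B_k = 0$ since $\ker A\T A = \ker A$. The recursion at index $k+1$, using $c_{k+1} = 0$ (valid since $k+1 \leq l$), gives $E\T E\, B_k = -A\T A\, B_{k+1}$. Now compute the trace of $B_k\T E\T E B_k$ in two ways:
$$\tr(B_k\T E\T E B_k) = \tr\big((E B_k)\T (E B_k)\big) = \sum_{i,j} (E B_k)_{ij}^2,$$
and, using the recursion together with $A B_k = 0$,
$$\tr(B_k\T E\T E B_k) = -\tr(B_k\T A\T A\, B_{k+1}) = -\tr\big((A B_k)\T (A B_{k+1})\big) = 0.$$
Hence $E B_k = 0$, so every column of $B_k$ lies in $\ker A \cap \ker E$. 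The hypothesis that $A + \epsilon E$ has full column rank for each $\epsilon \neq 0$ forces this intersection to be $\{0\}$, so $B_k = 0$. The companion recursion at index $k+1$ then gives $B_{k+1}\, A\T A = -B_k\, E\T E = 0$, completing the induction.

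The main obstacle is the inductive step: $B_k\, A\T A = 0$ alone only forces the columns of $B_k$ into $\ker A$, which is insufficient to conclude $B_k = 0$. The key idea is to exploit positivity via the trace identity above, turning an algebraic vanishing into the sum-of-squares equality $\sum_{i,j}(E B_k)_{ij}^2 = 0$; the recursion at the next index is exactly what is needed to evaluate the trace and show it vanishes, and the full-rank hypothesis on $A + \epsilon E$ is then used precisely to force $\ker A \cap \ker E = \{0\}$. Over $\KK = \CC$ the argument is identical with transpose replaced by conjugate transpose: $C'(\epsilon)$ is Hermitian so $B_k$ is Hermitian, and $\sum_{i,j}|(E B_k)_{ij}|^2 = 0$ still forces $E B_k = 0$.
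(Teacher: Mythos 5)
Your proof is correct, and it takes a genuinely different route from the paper's. The paper proves the same induction by combining Jacobi's formula (expressing each $c_i$ as $\tr$ of a derivative of $\adj C'(\epsilon)$ against $E\T E$) with a positivity argument: it shows that $\lim_{\epsilon \to 0}\epsilon^{-(l-1)}\adj C'(\epsilon)$ is positive semi-definite, invokes the fact that the trace of a product of PSD matrices vanishes only if the product does, and then concludes via invertibility of $A\T A + E\T E = (A+E)\T (A+E)$; the $(b_l)$-type statements are handled by a separate cofactor-expansion computation. You instead expand the adjugate identity $\adj C'(\epsilon)\,C'(\epsilon) = C'(\epsilon)\,\adj C'(\epsilon) = \det C'(\epsilon)\, I$ coefficientwise to get the two recursions $B_k A\T A + B_{k-1}E\T E = c_k I$ and $A\T A B_k + E\T E B_{k-1} = c_k I$, and close the induction by a Gram-matrix trace computation ($\tr(B_k\T E\T E B_k) = \sum_{i,j}(EB_k)_{ij}^2$, shown to vanish via the recursion and $AB_k=0$), finishing with the elementary observation that $\ker A \cap \ker E = \{0\}$ follows from full column rank of $A+\epsilon E$ for $\epsilon \neq 0$ — which is in fact equivalent to the invertibility fact the paper uses, since $x\T(A\T A + E\T E)x = \|Ax\|^2 + \|Ex\|^2$. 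Both proofs ultimately rest on the same two ingredients (a sum-of-squares positivity argument and the full-rank hypothesis at $\epsilon \neq 0$), but yours is more self-contained and uniform: it avoids Jacobi's formula, avoids the limit argument needed to establish positive semi-definiteness of the derivative matrix, and obtains the $(b)$-type statements for free from the companion recursion rather than by a separate cofactor expansion; it also cleanly isolates the stronger intermediate fact $B_k = 0$ with all columns killed by both $A$ and $E$. What the paper's route buys is that the trace expressions for the $c_i$ produced by Jacobi's formula are exactly the quantities reused in the explicit limit formula of \thref{cor:limit_formula}, so its bookkeeping feeds directly into that corollary; your argument would need to note separately that the leading nonvanishing coefficient $c_l$ is the trace appearing there. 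Your remark about the complex case (Hermitian adjugate, conjugate transpose, $\sum|(EB_k)_{ij}|^2$) matches the paper's convention.
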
 

\begin{proof}
We use strong induction. We start with the base case $l = 0$, so we assume that $c_0 = 0$. For \ref{conda} there is nothing to check since $l = 0$. To show \ref{condb}, it is enough to show that  $$ \adj C'(0)  \begin{pmatrix} 
 f_1 \cdot  f_k \\
\vdots \\
 f_p \cdot  f_k
\end{pmatrix} = 0,$$
for each $k \in \{1, \hdots, p\}$.
Now $\adj C'(0) = A\T A$, and we have that $$ \adj A\T  A \begin{pmatrix} 
 f_1 \cdot  f_k \\
\vdots \\
 f_p \cdot  f_k
\end{pmatrix} =  (0, \hdots, 0 , \det A\T  A ,0,\hdots, 0)\T ,$$ where $\det A\T A$ appears in the $k$-the entry, by the same cofactor expansion argument as in the proof of \thref{simplecase}. Since $c_0 = \det A\T A =0$, it follows that the above expression vanishes, which shows \ref{condb} when $l = 0$. 
This establishes the base case.

Fix some $1 \leq l \leq p$ and suppose that $c_i = 0$ for all $i \leq l$. Assume:
\begin{enumerate}
\item[$(a_{l-1})$]  $\left. \frac{d^k}{d \epsilon^{k}} \right|_{\epsilon = 0} \adj C'(\epsilon) = 0$ for all $ k < l-1$; 
\item[$(b_{l-1})$]  $\left. \frac{d^{l-1}}{d \epsilon^{l-1}} \right|_{\epsilon =0} \adj C'(\epsilon) A\T  A = 0.$
\end{enumerate}
We wish to show that: 
\begin{enumerate}
\item[$(a_{l})$]  $\left. \frac{d^{l-1}}{d \epsilon^{l-1}} \right|_{\epsilon = 0} \adj C'(\epsilon) = 0$;
\item[$(b_{l})$]  $\left. \frac{d^{l}}{d \epsilon^{l}} \right|_{\epsilon =0} \adj C'(\epsilon) A\T  A = 0.$
\end{enumerate}
We start by proving $(a_l)$. By $(b_{l-1})$ we know that $\left. \frac{d^{l-1}}{d \epsilon^{l-1}} \right|_{\epsilon =0} \adj C'(\epsilon) A\T  A = 0,$ therefore it is sufficient to show that \begin{equation} \label{whatwewant} \left. \frac{d^{l-1}}{d \epsilon^{l-1}} \right|_{\epsilon =0} \adj C'(\epsilon) E\T  E = 0, \end{equation} since $A\T A + E\T E = (A+E)\T (A+E)$ is invertible. We now prove \eqref{whatwewant} using the assumptions $c_l = 0$ and $(a_{l-1})$. 
Using the expression for $c_l$ given in \eqref{ci} we have
\begin{equation} \operatorname{tr} \left(\left.  \frac{d^{l-1}}{d \epsilon^{l-1}} \right|_{\epsilon = 0} \adj C'(\epsilon)  E\T  E \right) = 0. \label{tracezero} 
\end{equation} 
If $ \left.  \frac{d^{l-1}}{d \epsilon^{l-1}} \right|_{\epsilon = 0} \adj C'(\epsilon)$ is positive semi-definite, then the product inside the trace in~\eqref{tracezero} is zero. This is because $E\T  E$ is positive semi-definite and the trace of a product of positive semi-definite matrices is zero if and only if the product is zero. We can establish that $ \left.  \frac{d^{l-1}}{d \epsilon^{l-1}} \right|_{\epsilon = 0} \adj C'(\epsilon)$ is positive semi-definite using $(b_{l-1})$.
By $(b_{l-1})$, we know that \begin{align*}  \adj C'(\epsilon) &  = \epsilon^{l-1} \left( \left. \frac{d^{l-1}}{d \epsilon^{l-1}} \right|_{\epsilon = 0} \adj C'(\epsilon) \right) + \epsilon^l \left( \left. \frac{d^{l}}{d \epsilon^{l}} \right|_{\epsilon = 0} \adj C'(\epsilon) \right) + \cdots \\
& = \epsilon^{l-1} \left(  \left( \left. \frac{d^{l-1}}{d \epsilon^{l-1}} \right|_{\epsilon = 0} \adj C'(\epsilon) \right) + \epsilon \left( \left. \frac{d^{l}}{d \epsilon^{l}} \right|_{\epsilon = 0} \adj C'(\epsilon) \right) + \cdots \right) . \label{cancelling} 
\end{align*} 
The matrix $C'(\epsilon)$ is positive semi-definite. Hence $\operatorname{lim}_{\epsilon \to 0} \epsilon^{-l+1} \adj C'(\epsilon)$ is positive semi-definite. But this limit is
$$ \left. \frac{d^{l-1}}{d \epsilon^{l-1}} \right|_{\epsilon = 0} \adj C'(\epsilon).$$ 
Hence the product inside the trace in~\eqref{tracezero} is zero.
This proves $(a_l)$. 
To show $(b_l)$, it is enough to show that for any $k \in \{1, \hdots, p\}$ we have $$\left. \frac{d^l}{d \epsilon^l} \right|_{\epsilon = 0} \adj C'(\epsilon)\begin{pmatrix}  f_1 \cdot  f_k \\
\vdots \\
 f_p \cdot  f_k
\end{pmatrix}  = 0. $$ 
Expanding the expression on the left hand side gives
\begin{align*} 
  & \left. \frac{d^l}{d \epsilon^l} \right|_{\epsilon = 0} \left(  \adj C'(\epsilon)  \begin{pmatrix} 
 f_1 \cdot  f_k + \epsilon v_1 \cdot v_k \\
\vdots \\
 f_p \cdot  f_k + \epsilon v_p \cdot v_k \end{pmatrix} \right) -  \epsilon \left( \left. \frac{d^{l-1}}{d \epsilon^{l-1}} \right|_{\epsilon = 0} \adj C'(\epsilon) \right) \begin{pmatrix}
v_1 \cdot v_k \\
\vdots \\
v_p \cdot v_k 
\end{pmatrix} \\
  = & \left. \left( \frac{d^l}{d \epsilon^l}\right|_{\epsilon = 0} \det C'(\epsilon)\right) e_k  - \epsilon \left( \left. \frac{d^{l-1}}{d \epsilon^{l-1}} \right|_{\epsilon =0} \adj C'(\epsilon) \right) \begin{pmatrix} 
 v_1 \cdot v_k \\
 \vdots \\
 v_p \cdot v_k
 \end{pmatrix} ,
\end{align*} 
This is zero, because 
$\left. \frac{d^l}{d \epsilon^l} \right|_{\epsilon = 0} \det C'(\epsilon) = 0$
and the product inside the trace in~\eqref{tracezero} is zero.
 This proves $(b_l)$.     \end{proof}

\begin{corollary}
\thlabel{cor:limit_formula}
Let $A(\epsilon) = A + \epsilon E$ and $b(\epsilon) = b+ \epsilon v$ be as in \thref{thm:linear_system}. Let $\{  f_1, \ldots,  f_p \}$ be the columns of $A$ and $\{ v_1, \ldots, v_p \}$ the columns of $E$. 
Define $\overline{b} = \pi_A (b)$ and $\overline{v} = \pi_E(v)$. 
Let $C'(\epsilon)$ be defined as at \eqref{Cprime}.
Let $x(\epsilon)$ be the unique solution to $A(\epsilon) x(\epsilon) = \pi_{A(\epsilon)}(b(\epsilon))$. Then the limit $\lim_{\epsilon \to 0} x(\epsilon)$ exists and equals 
$$  \frac{1}{\operatorname{tr} \left( \left.  \frac{d^{l-1}}{\epsilon^{l-1}} \right|_{\epsilon = 0}  \adj C'(\epsilon) E\T E \right)} \left( \left. \frac{d^l}{d \epsilon^l} \right|_{\epsilon = 0} \adj C'(\epsilon)\begin{pmatrix}  f_1 \cdot \overline{b} \\
\vdots \\
 f_p \cdot \overline{b}
\end{pmatrix} + \left. \frac{d^{l-1}}{d \epsilon^{l-1}} \right|_{\epsilon = 0} \adj C'(\epsilon)\begin{pmatrix} v_1 \cdot \overline{v_j} \\
\vdots \\
v_p \cdot \overline{v_j}
\end{pmatrix}  \right) ,$$ 
where  $l$ denotes the smallest integer in $\{1, \hdots, p\}$ with  $\operatorname{tr} \left( \left.  \frac{d^{l-1}}{\epsilon^{l-1}} \right|_{\epsilon = 0}  \adj C'(\epsilon) E\T E \right) \neq 0$.  
\end{corollary}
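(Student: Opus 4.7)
The plan is to deduce this from the scaffolding already built up in Section~\ref{sec:limitsolutions}, combining the existence argument implicit in \thref{stronginduction} with the explicit formulas \eqref{ci} and \eqref{Di}. First I would observe that since $x(\epsilon)=x'(\epsilon^2)$ it suffices to show the limit $\lim_{\epsilon\to 0} x'(\epsilon)$ exists and equals the asserted expression, where
$$ x'(\epsilon)=\frac{D_0+D_1\epsilon+\cdots+D_p\epsilon^p}{c_0+c_1\epsilon+\cdots+c_p\epsilon^p} $$
is the rational function of $\epsilon$ from \eqref{secondpolynomial}. The hypothesis that $A(\epsilon)$ has full column rank for all $\epsilon\ne 0$ guarantees that $\det C'(\epsilon)=\det A(\sqrt{\epsilon})\T A(\sqrt{\epsilon})$ is a nonzero polynomial, so the set $\{i:c_i\ne 0\}$ is non-empty. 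Let $l$ denote its minimum; by formula \eqref{ci} this $l$ coincides with the one specified in the statement, namely the smallest index such that $\operatorname{tr}\bigl(\bigl(\left.\tfrac{d^{l-1}}{d\epsilon^{l-1}}\right|_{\epsilon=0}\adj C'(\epsilon)\bigr)E\T E\bigr)\ne 0$.

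The key step is to invoke \thref{stronginduction} with this $l$: its conclusions \ref{conda} and \ref{condb} combined with the explicit expression \eqref{Di} for $D_i$ imply that $D_i=0$ for every $i<l$. Indeed, the first summand of \eqref{Di} vanishes because $A\T A$ times $\tfrac{d^{i-1}}{d\epsilon^{i-1}}|_0\adj C'(\epsilon)$ vanishes (by part~\ref{condb} used at indices below $l$), and the second summand vanishes because $\tfrac{d^{i-1}}{d\epsilon^{i-1}}|_0\adj C'(\epsilon)=0$ for $i-1<l-1$ (by part~\ref{conda}); the case $i=l-1$ itself follows from $(a_l)$ plus $(b_{l-1})$ applied to the two summands respectively. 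Consequently both numerator and denominator of $x'(\epsilon)$ are divisible by $\epsilon^l$, and dividing through yields
$$\lim_{\epsilon\to 0}x'(\epsilon)\;=\;\frac{D_l}{c_l}.$$

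Finally I would substitute the explicit formulas \eqref{ci} and \eqref{Di} for $c_l$ and $D_l$ into this ratio; the result is exactly the expression stated in the corollary. The main technical obstacle is the bookkeeping surrounding the vanishing of the lower-order $D_i$: one must check carefully that the two summands in \eqref{Di} vanish for precisely the right reasons, using the two parts of \thref{stronginduction} in tandem, and that the possible discrepancy between derivatives at $\epsilon=0$ and Taylor coefficients (off by factorials) cancels uniformly between numerator and denominator so that $D_l/c_l$ gives the genuine limit. Everything else is a direct assembly of results already proved in this section.
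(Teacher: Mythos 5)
Your argument is essentially identical to the paper's proof: the paper also notes $x(\epsilon)=x'(\epsilon^2)$, uses \thref{stronginduction} together with \eqref{Di} to conclude $D_i=0$ for all $i<l$, cancels $\epsilon^{l}$ (there $\epsilon^{2l}$) from numerator and denominator to get the limit $D_l/c_l$, and then substitutes \eqref{ci} and \eqref{Di}. The extra bookkeeping you supply (killing the first summand of \eqref{Di} via part \ref{condb} at levels below $l$, using $\overline{b}\in\operatorname{im}A$, and the second via part \ref{conda}) is exactly what the paper leaves implicit, so the proposal is correct and follows the same route.
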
 

\begin{proof}
Since $c_i =0$ for all $i <l$, by \thref{stronginduction} we have $D_i = 0$ for all $i <l$ (based on the expression for $D_i$ given in \eqref{Di}), so that $$ \mu_j(\epsilon) = \frac{1}{c_l \epsilon^{2l} + c_{l+1} \epsilon^{2l+2} + \cdots} \left( D_l \epsilon^{2l} + D_{l+1} \epsilon^{2l+2} + \cdots \right) = \frac{D_l}{c_l} + \frac{D_{l+1}}{c_{l+1}} \epsilon^2 + \cdots.$$ Therefore the limit of $\mu_j(\epsilon)$ as $\epsilon$ tends to zero exists, and equals $ D_l/ c_l$. \end{proof}

\section{MLEs given sample stabilisations in the limit} \label{sec:uniqueMLEinlimit} 

We gave necessary and sufficient conditions for the MLE given an $f$-stabilisation to be an MLE given $f$ in Section \ref{sec:MLEofsamplestab}. In this section we consider the limit of the $\Lambda$-MLE or MLE given $\widetilde{f}(\epsilon) : = f+ \epsilon f'$ as $\epsilon \to 0$. We show that we always obtain an MLE given $f$ (if one exists, otherwise a $\Lambda$-MLE), in Section \ref{subsec:limitMLE}. We study which MLEs given $f$ can be obtained as MLEs given $f$-stabilisations under such a limit, in Section \ref{subsec:whichMLEslimit}.

\subsection{The limit MLE given $\widetilde{f}$ exists and is an MLE given $f$} \label{subsec:limitMLE}

We prove that if $\widetilde{f} = f+ f'$ is an $f$-stabilisation, then the MLE given $\widetilde{f}(\epsilon) : =f+ \epsilon f'$ has a well-defined limit as $\epsilon$ tends to zero, and moreover that this limit is an MLE given $f$ if one exists. If the MLE given $f$ does not exist, then the previous statement remains true by considering the $\Lambda$-MLE. We also describe the $\Lambda$-MLE and MLE given $f$ that is picked out by this process. We start by proving the result about $\Lambda$-MLEs, before turning to MLEs in \thref{mainresult}. 

\begin{proposition}[Limit $\Lambda$-MLE given an $f$-stabilisation] \thlabel{mainresultLambda}
Fix a DAG $\mathcal{G}$, a sample $f$ and an $f$-stabilisation $\widetilde{f}= f + f'$. Let $f_i$ denote the columns of $f$ and $v_i$ the columns of $f'$. For each child vertex $i$ let $C'_i(\epsilon) : = A_i\T A_i + \epsilon E_i\T E_i$, where $A_i$ (respectively $E_i$) is the $n \times m$ matrix with columns the subset of the $f_j$ (respectively $v_j$) such that $j \to i$. Let $\overline{f_i}= \pi_{\langle f_j :j \to i \rangle}(f_i)$ and let $\overline{v_i} = \pi_{\langle v_j :j \to i \rangle}( v_i)$. Let $\widetilde{f}(\epsilon) = f + \epsilon f'$ for $\epsilon \neq 0$. Then we have the following results about MLEs in the DAG model on $\Gcal$: 
\begin{enumerate}[(a)]
    \item \label{uniqueepsilon} a unique $\Lambda$-MLE exists given $\widetilde{f}(\epsilon)$ for any $\epsilon \neq 0$; 
    
    \item \label{existenceoflimit} fix a vertex $i$ and suppose for simplicity that $ f_1, \hdots,  f_p$ and $v_1, \hdots, v_p$ are the columns of $f$ and $f'$ respectively indexed by edges $j \to i$. Then the $\Lambda_i$-MLE given $\widetilde{f}(\epsilon)$ has a well-defined limit as $\epsilon$ tends to zero, given by: 
        $$ \frac{1}{\operatorname{tr} \left( \left.  \frac{d^{l-1}}{\epsilon^{l-1}} \right|_{\epsilon = 0}  \adj C_i'(\epsilon) E\T  E \right)} \left( \left. \frac{d^l}{d \epsilon^l} \right|_{\epsilon = 0} \adj C_i'(\epsilon)\begin{pmatrix}  f_1 \cdot \overline{f_i} \\
        \vdots \\
         f_p \cdot \overline{f_i}
        \end{pmatrix} + \left. \frac{d^{l-1}}{d \epsilon^{l-1}} \right|_{\epsilon = 0} \adj C_i'(\epsilon)\begin{pmatrix} v_1 \cdot \overline{v_i} \\
        \vdots \\
        v_p \cdot \overline{v_i}
        \end{pmatrix}  \right), $$
    where  $l \in \{1, \hdots, p\}$ denotes the smallest integer such that
        $$ \operatorname{tr} \left( \left.  \frac{d^{l-1}}{\epsilon^{l-1}} \right|_{\epsilon = 0}  \adj C_i'(\epsilon) E\T  E \right) \neq 0.$$  
    
    \item \label{limit} the limit of the $\Lambda$-MLE given $\widetilde{f}(\epsilon)$ as $\epsilon$ tends to zero is a $\Lambda$-MLE given $f$;
    
    \item \label{independent} if $\overline{f_i} + \overline{v_i} \in \langle f_j + v_j :j \to i \rangle$ for all vertices $i$, then the $\Lambda$-MLE given $\widetilde{f}(\epsilon)$ is independent of $\epsilon$ and is a $\Lambda$-MLE given $f$. 
\end{enumerate}
   \end{proposition}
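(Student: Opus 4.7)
The plan is to reduce each part to results already established in Sections~5 and~6. The unifying observation is that $\epsilon f'$ is itself an $f$-perturbation for every $\epsilon\ne 0$, since the conditions $\im f'\subseteq(\im f)^\perp$ and $(\ker f')^\perp=\ker f$ in \thref{sampleperturbandstab} are both scale-invariant; in particular every column of $f'$ is orthogonal to every column of $f$. For part~\ref{uniqueepsilon}, this observation together with \thref{samplestabisstable} gives that $\widetilde f(\epsilon)$ has maximal rank, whence every submatrix of $\widetilde f(\epsilon)$ indexed by $\pa(i)\cup\{i\}$ has full column rank, and \thref{thm:DAG_Ymatrix} delivers uniqueness of the MLE, and hence of the $\Lambda$-MLE, given $\widetilde f(\epsilon)$.

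For part~\ref{existenceoflimit}, I would fix a child vertex $i$ and observe that the $\Lambda_i$-MLE given $\widetilde f(\epsilon)$ is the unique $x(\epsilon)\in\KK^p$ solving
\[A_i(\epsilon)\, x(\epsilon) \;=\; \pi_{A_i(\epsilon)}(f_i+\epsilon v_i),\qquad A_i(\epsilon)=A_i+\epsilon E_i.\]
The orthogonality hypotheses of \thref{cor:limit_formula}---that the columns of $A_i$ and the vector $f_i$ are orthogonal to the columns of $E_i$ and to $v_i$---are exactly what the inclusion $\im f'\subseteq(\im f)^\perp$ provides, while full column rank of $A_i(\epsilon)$ for $\epsilon\ne 0$ comes from part~\ref{uniqueepsilon}. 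Applying \thref{cor:limit_formula} with $A=A_i$, $E=E_i$, $b=f_i$, $v=v_i$ then yields existence of the limit and the displayed formula verbatim. Part~\ref{limit} follows at once by invoking \thref{prop:if_limit} child-vertex by child-vertex: having established existence of each limit in~\ref{existenceoflimit}, the limit necessarily solves $A_i\, x=\pi_{A_i}(f_i)$, which is precisely the defining relation for a $\Lambda_i$-MLE given $f$; collecting across all child vertices produces a $\Lambda$-MLE given $f$.

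For part~\ref{independent}, the hypothesis combined with the orthogonality of $\langle f_1,\hdots,f_m\rangle$ and $\langle v_1,\hdots,v_m\rangle$ forces the \emph{same} coefficients $\mu_{ij}$ in the expansions $\overline{f_i}=\sum_{j\to i}\mu_{ij}f_j$ and $\overline{v_i}=\sum_{j\to i}\mu_{ij}v_j$, so that $\overline{f_i}+\epsilon\overline{v_i}=\sum_{j\to i}\mu_{ij}(f_j+\epsilon v_j)$ lies in $\langle f_j+\epsilon v_j:j\to i\rangle$ for every $\epsilon$. \thref{MLEofstab} applied to the $f$-perturbation $\epsilon f'$ then identifies $(\mu_{ij})$ as the unique $\Lambda_i$-MLE given $\widetilde f(\epsilon)$, independent of $\epsilon$, and simultaneously as a $\Lambda_i$-MLE given $f$. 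The main obstacle is the translation carried out in part~\ref{existenceoflimit}: one must verify carefully that the \emph{full} definition of an $f$-perturbation---both the image inclusion and the kernel equality---is required to match the orthogonality and rank hypotheses of \thref{cor:limit_formula} uniformly across the child vertices $i$. Once this matching is in place, the remaining parts reduce to bookkeeping around the scale-invariance of the perturbation conditions.
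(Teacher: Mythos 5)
Your proposal is correct and follows essentially the same route as the paper: parts (a)--(c) are argued identically, by noting that $\epsilon f'$ is again an $f$-perturbation, invoking \thref{samplestabisstable} for uniqueness, and recasting each $\Lambda_i$-MLE as the solution of $A_i(\epsilon)x=\pi_{A_i(\epsilon)}(f_i+\epsilon v_i)$ so that \thref{thm:linear_system}, \thref{prop:if_limit} and \thref{cor:limit_formula} apply verbatim. The only divergence is part (d), where you split the coefficients via the orthogonality of the column spans and then appeal to \thref{MLEofstab} applied to $\epsilon f'$ (plus uniqueness from (a)), whereas the paper deliberately argues through \thref{simplecase}; both rest on the same projection computation, and your version is sound.
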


\begin{proof}
If $f'$ is an $f$-perturbation, then $\epsilon f'$ is also an $f$-perturbation for any $\epsilon  \neq 0$. Therefore $\widetilde{f}(\epsilon)$ is an $f$-stabilisation for any $\epsilon \neq 0$ and so by \thref{samplestabisstable} there is a unique MLE given $\widetilde{f}$. In particular there is a unique $\Lambda$-MLE given $\widetilde{f}$. This proves \ref{uniqueepsilon}.

We now turn to \ref{existenceoflimit} and \ref{limit}. 
We can find the $\Lambda$-MLE by finding each $\Lambda_i$-MLE independently.
The $\Lambda_i$-MLE given a sample $Y$ are the coefficients $\lambda_{ij}$ in front of each $Y^{(j)}$ in the orthogonal projection of $Y^{(i)}$ onto the span of $\{ Y^{(j)}  :j \to i \}$. 
Hence they are the entries of $x$ in a linear system of the form $Ax = \pi_A(b)$, where $A$ has the vectors $Y^{(j)}$ for $j \to i$ as its columns and $b= Y^{(i)}$. Therefore the $\Lambda_i$-MLE is not unique if and only if the linear system is underdetermined. 

By definition, the $\Lambda_i$-MLE given $\widetilde{f}(\epsilon)$ is the unique solution $x(\epsilon)$ to the linear system $$(A_i+ \epsilon E_i)x(\epsilon) = \pi_{\langle f_j + \epsilon v_j  : j \to i \rangle}(f_i+v_i).$$ Recall that the matrix $A_i+\epsilon E_i$ is the matrix obtained from $f+ \epsilon f'$ by picking out those columns indexed by vertices $j$ such that $j \to i$.  Since the columns of $f$ are orthogonal to the columns of $f'$ by the definition of a sample stabilisation, it follows that the columns of $A_i$ and $f_i$ are orthogonal to the columns of $E_i$ and $v_i$. We also know that $f+ \epsilon f'$ has full column rank, since $f+ \epsilon f'$ is an $f$-stabilisation. Therefore $A_i + \epsilon E_i$ has full column rank for each $\epsilon \neq 0$. We have thus shown that $A_i(\epsilon) = A_i + \epsilon E_i$ and $f_i(\epsilon) = f_i+ \epsilon v_i$ satisfy the assumptions of \thref{thm:linear_system}. It follows that $x(\epsilon)$ has a well-defined limit as $\epsilon$ tends to zero, and moreover that the limit $x(0)$ is a solution to $ A_i x = \overline{f_i}$. This ensures that the limit $x(0)$ is a $\Lambda_i$-MLE given $f$. The formula is obtained from Corollary~\ref{cor:limit_formula}.

It remains to show \ref{independent}. If $\overline{f_i} + \overline{v_i} \in \langle f_j + v_j :j \to i \rangle$, so that $\overline{f_i} + \overline{v_i} = \sum_{j \to i} \mu_j (f_j + v_j)$ for some $\mu_j \in \KK$, then the $\Lambda_i$-MLE given $\widetilde{f}(\epsilon)$ is 
$ \mu_i(\epsilon) = \sum_{j \to i} \mu_j e_j$, which is independent of~$\epsilon$, by \thref{simplecase}. Hence the limit $\Lambda$-MLE, which is a $\Lambda$-MLE given $f$, is also a $\Lambda$-MLE given $\widetilde{f}(\epsilon)$ for any $\epsilon \neq 0$. 
\end{proof}

\begin{remark}[Connection to \thref{MLEofstab}]
\thref{mainresultLambda}\ref{independent} is the reverse implication of \thref{MLEofstab}. We included it above because we prove it using a different method.  
\end{remark}

We build on \thref{mainresultLambda} to obtain an analogous result about MLEs. 

\begin{theorem}[Limit MLE given a sample stabilisation] \thlabel{mainresult} 
Fix a DAG $\mathcal{G}$, a sample $f$ and an $f$-stabilisation $\widetilde{f} : = f+ f'$. Let $\widetilde{f}(\epsilon) : = f+ \epsilon f'$ for $\epsilon \neq 0 $. Then we have the following results about MLEs in the DAG model on $\Gcal$:
\begin{enumerate}[(a)]
\item $\widetilde{f}(\epsilon)$ has a unique MLE for any $\epsilon \neq 0$; \label{parta}
\item the MLE given $\widetilde{f}(\epsilon)$ has a well-defined limit as $\epsilon \to 0$; \label{partb}
\item if $f$ has at least one MLE then the limit is an MLE given $f$, more precisely the unique MLE with $\Lambda$-MLE component given in \thref{mainresultLambda}\ref{existenceoflimit}; \label{partc}
\item the MLE given $\widetilde{f}(\epsilon)$ is independent of $\epsilon$ and an MLE given $f$ if and only if \begin{equation} v_i \in \langle v_j :j \to i \rangle \text{ and }  \overline{f_i} +  v_i \in \langle f_j + v_j :j \to i \rangle,  
\label{conditions}
\end{equation} for all child vertices $i$, where $\overline{f_i} := \pi_{\langle f_j : j \to i \rangle}(f_i)$.  \label{partd}
\end{enumerate}
    
\end{theorem}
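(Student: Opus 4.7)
The plan is to bootstrap from \thref{mainresultLambda} (for the $\Lambda$-part) and \thref{OmegaMLEofstab} (for the $\Omega$-part), combining them via a continuity argument on the residuals.

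\textbf{Part \ref{parta}.} First I observe that if $f'$ is an $f$-perturbation then so is $\epsilon f'$ for every $\epsilon\neq 0$: the two orthogonality conditions in \thref{sampleperturbandstab} are unchanged under rescaling, and $\ker(\epsilon f') = \ker(f')$. Hence $\widetilde{f}(\epsilon)$ is an $f$-stabilisation and by \thref{samplestabisstable} has a unique MLE.

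\textbf{Parts \ref{partb} and \ref{partc}.} The MLE given a sample $Y$ splits into the $\Lambda$-MLE $\hat\lambda_{ij}(Y)$ and the $\Omega$-MLE components $\hat\omega_i(Y)$, with the latter being the residuals $\tfrac{1}{n}\|Y^{(i)} - \sum_{j\to i}\hat\lambda_{ij}(Y)\, Y^{(j)}\|^2$ (whenever these are strictly positive). By \thref{mainresultLambda}\ref{existenceoflimit} the $\Lambda$-MLE given $\widetilde{f}(\epsilon)$ has a well-defined limit $\hat\lambda(0)$ as $\epsilon\to 0$, with the explicit formula stated there. Substituting $\widetilde{f}(\epsilon)^{(i)} = f_i + \epsilon v_i$ into the residual and using that $\hat\lambda_{ij}(\widetilde{f}(\epsilon))$ converges, the residuals converge to $\tfrac{1}{n}\|f_i - \sum_{j\to i}\hat\lambda_{ij}(0) f_j\|^2$. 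This proves \ref{partb}.

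For \ref{partc}, by \thref{mainresultLambda}\ref{limit} the limiting $\hat\lambda(0)$ is a $\Lambda$-MLE given $f$, so the residuals above coincide with the $\Omega$-MLE residuals for $f$. Assuming an MLE given $f$ exists, \thref{thm:DAG_Ymatrix}(b) ensures these residuals are strictly positive, so they really are the $\Omega$-MLE components given $f$. Together we obtain an MLE given $f$ whose $\Lambda$-part is the one in \thref{mainresultLambda}\ref{existenceoflimit}. Uniqueness of the $\Omega$-MLE given the $\Lambda$-MLE (Proposition~\ref{prop:lambda_and_omega}) then pins down the limit as the stated MLE.

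\textbf{Part \ref{partd}.} For the reverse implication, assume \eqref{conditions} holds. The key observation is that these conditions are stable under rescaling $f'$ by $\epsilon$: clearly $\epsilon v_i \in \langle \epsilon v_j: j\to i\rangle$, and writing $v_i = \sum_{j\to i}\mu_{ij}v_j$ and using orthogonality of the $f_j$ and $v_j$, the second condition forces $\overline{f_i} = \sum_{j\to i}\mu_{ij}f_j$ with the same coefficients, so $\overline{f_i} + \epsilon v_i = \sum_{j\to i}\mu_{ij}(f_j+\epsilon v_j) \in \langle f_j + \epsilon v_j: j\to i\rangle$. Hence \thref{OmegaMLEofstab} applies to $\widetilde{f}(\epsilon)$ for every $\epsilon\neq 0$, yielding that the MLE given $\widetilde{f}(\epsilon)$ is an MLE given $f$. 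By \thref{mainresultLambda}\ref{independent} the $\Lambda$-part is independent of $\epsilon$; the residual computation from \ref{partb} then shows the $\Omega$-part is also independent of $\epsilon$ (the $\epsilon v_j$ contributions cancel against $\epsilon v_i$ using $v_i = \sum \mu_{ij}v_j$). The forward implication is immediate: applying \thref{OmegaMLEofstab} to $\widetilde{f} = \widetilde{f}(1)$ directly yields \eqref{conditions}.

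\textbf{Main obstacle.} The subtle point is not the forward directions, which reduce cleanly to \thref{OmegaMLEofstab}, but verifying that the residuals behave continuously in $\epsilon$ and that the limiting residual is strictly positive precisely when an MLE given $f$ exists. This uses both the continuity of $\hat\lambda_{ij}(\widetilde{f}(\epsilon))$ from \thref{thm:linear_system} and the characterisation of MLE existence in \thref{thm:DAG_Ymatrix}.
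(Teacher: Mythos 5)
Your proposal is correct and follows essentially the same route as the paper: part \ref{parta} via scale-invariance of the perturbation conditions and \thref{samplestabisstable}, parts \ref{partb}--\ref{partc} by combining \thref{mainresultLambda} for the $\Lambda$-part with continuity of the residuals for the $\Omega$-part (the paper phrases this through convergence of the projections from the proof of \thref{prop:if_limit}, you phrase it through convergence of the $\hat\lambda_{ij}(\epsilon)$, which is the same ingredient), and part \ref{partd} via the $\epsilon$-invariance of \eqref{conditions} together with \thref{OmegaMLEofstab} and \thref{mainresultLambda}\ref{independent}. One small wording fix in \ref{partd}: rather than fixing coefficients with $v_i=\sum_{j\to i}\mu_{ij}v_j$ first, take the coefficients witnessing $\overline{f_i}+v_i\in\langle f_j+v_j : j\to i\rangle$ and use orthogonality of $\langle f_1,\ldots,f_m\rangle$ and $\langle v_1,\ldots,v_m\rangle$ to split them, which yields both $\overline{f_i}=\sum c_j f_j$ and $v_i=\sum c_j v_j$ simultaneously.
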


\thref{mainresult}\ref{parta} and \ref{partc} is \thref{firstmainresult}\ref{iff}, while \thref{mainresult}\ref{partc} is \thref{firstmainresult}\ref{limitis}. 

\begin{proof}
For \ref{parta}, see the proof of \thref{mainresultLambda}\ref{uniqueepsilon}. 
By \thref{mainresultLambda}\ref{existenceoflimit}, the $\Lambda$-MLE given $\widetilde{f}(\epsilon)$ has a well-defined limit as $\epsilon \to 0$. It remains to show that the $\Omega$-MLE also has a well-defined limit. 
The $\Omega$-MLE $\omega(\epsilon)$ given $\widetilde{f}(\epsilon)$ has components
$$ \omega_{i}(\epsilon) = || \pi_{\langle f_j + \epsilon v_j :  j \to i \rangle} (f_i+  \epsilon v_i) - f_i - \epsilon v_i || .$$
We have
$$\pi_{\langle f_j + \epsilon v_j : j \to i \rangle} (f_i + \epsilon v_i) \to \pi_{\langle f_j  : j \to i \rangle}(f_i)$$ as $\epsilon \to 0$,
by the proof of Proposition~\ref{prop:if_limit}.
Since the limit commutes with taking the norm, it follows that $\omega_{i}(\epsilon)$ tends to $||  \pi_{\langle f_j :  j \to i \rangle}(f_i) - f_i ||$ as $\epsilon$ tends to zero. Therefore the $\Omega$-MLE given $\widetilde{f}(\epsilon)$ has a limit as $\epsilon$ tends to zero, which proves \ref{partb}. Moreover, if the $\Omega$-MLE given $f$ exists, then the limits $||  \pi_{\langle f_j : j \to i \rangle}(f_i) - f_i ||$ for all $j \to i$ make up the $\Omega$-MLE given $f$. Together with \thref{mainresultLambda}\ref{existenceoflimit}, this establishes \ref{partc}. 

To prove \ref{partd}, suppose first that the MLE given $\widetilde{f}(\epsilon)$ is independent of $\epsilon$ and an MLE given $f$. The second assumption ensures by \thref{OmegaMLEofstab} that the equations in \eqref{conditions} are satisfied for all child vertices $i$.
Conversely, if these equations are satisfied then they are also satisfied if the $v_i$ and $v_j$ are replaced by $\epsilon v_i$ and $\epsilon v_j$ for $\epsilon \neq 0$. Therefore the MLE given $\widetilde{f}(\epsilon)$ is an MLE given $f$, by \thref{OmegaMLEofstab}. In particular the $\Omega$-MLE given $\widetilde{f}(\epsilon)$ is the unique $\Omega$-MLE given $f$, which is independent of $\epsilon$. 
Moreover, if these equations are satisfied, then by \thref{mainresultLambda}\ref{independent} the $\Lambda$-MLE given $\widetilde{f}(\epsilon)$ is independent of $\epsilon$ and also a $\Lambda$-MLE given $f$. This shows that the MLE given $\widetilde{f}(\epsilon)$ is independent of $\epsilon$ and an MLE given $f$.
\end{proof} 

\begin{remark}[Strenghtening \thref{mainresult}\ref{partc}]
Our proof of \thref{mainresult}\ref{partc} proves a stronger statement, which doesn't require that an MLE given $f$ exists: if an MLE exists on a subset of vertices, then the limit of the partial MLE given $\widetilde{f}(\epsilon)$ on these vertices is a partial MLE given~$f$. 
\end{remark} 

We conclude this section by giving a name to the MLEs and $\Lambda$-MLEs obtained in the limit.

\begin{definition}[Limit MLE given a sample stabilisation]
Given a sample $f$ and an $f$-stabilisation $\widetilde{f} = f+ f'$, the \emph{limit $\Lambda$-MLE given $\widetilde{f}$} is the limit as $\epsilon$ tends to zero of the $\Lambda$-MLE given $\widetilde{f}(\epsilon) = f+\epsilon f'$. If the MLE given $f$ exists, then the \emph{limit MLE given $\widetilde{f}$} is defined analogously.  
\end{definition} 

\subsection{When is an MLE given $f$ the limit MLE given an $f$-stabilisation?} \label{subsec:whichMLEslimit}

We know that for a sample $f$, the limit MLE given any $f$-stabilisation is an MLE given $f$ if at least one exists given $f$, by \thref{mainresult}. In this section we address the following question: which MLEs given $f$ are limit MLEs given $f$-stabilisations? This question should be viewed as an extension of the question posed in Section \ref{subsec:whichMLEs} regarding which MLEs given $f$ coincide with the MLE given an $f$-stabilisation. We approach the question geometrically, giving an analogue of \thref{MLEfromMLEofstab}. We start first by answering the question for $\Lambda$-MLEs in \thref{answerq2} below. The solution to the problem for MLEs will follow immediately, see \thref{answerq2gen}. 

The statement of \thref{answerq2} requires defining for a $\Lambda$-MLE $\lambda$ given $f$ an associated locally closed subvariety $X_{f,\lambda}^{\operatorname{lim}}$ of the parameter space $X_f \subseteq X = \KK^{n \times m}$ of $f$-stabilisations defined in Section \ref{subsec:paramspace}. This subvariety will parametrise $f$-stabilisations $\widetilde{f}$ such that the limit $\Lambda$-MLE given $\widetilde{f}$ is $\lambda$.  To define $X_{f,\lambda}^{\operatorname{lim}}$, fix a sample $f$ and $\lambda$ a $\Lambda$-MLE given $f$. Let $\lambda_i$ denote the $\Lambda_i$-MLE for  each child vertex $i$. We represent $\lambda_i$ as a column vector of length $|\operatorname{pa}(i)|.$

By \thref{mainresultLambda}\ref{existenceoflimit} we know that for any $f$-perturbation $\widetilde{f}$ and any vertex $i$, the limit of the $\Lambda_i$-MLE given $\widetilde{f}(\epsilon) = f + \epsilon f'$ as $\epsilon$ tends to zero equals $D_l/c_l.$ 
We are therefore interested in whether or not there exists an $f$-stabilisation $\widetilde{f} = f+f'$ in the parameter space $X_f$ such that the following equation is satisfied for each vertex $i$: 
\begin{equation} c_l  \lambda_i  -  D_l=0. \label{fixedlambda} \end{equation} Each  entry in the above vector can be viewed as a polynomial in the entries of $f'$. Therefore \eqref{fixedlambda} cuts out a closed subvariety $X_{f,\alpha}^{i,\operatorname{lim}}$ of $X_f$ defined by the vanishing of the polynomial equations appearing in the entries of the vector in the left-hand side of \eqref{fixedlambda}. 
Let $$X_{f, \alpha}^{\operatorname{lim}} = \bigcap_{i} X_{f,\alpha}^{i,\operatorname{lim}} \subseteq X_f,$$ where the intersection ranges over all child vertices $j$ of $\mathcal{G}$. This is a closed subvariety of the parameter space $X_f$ of $f$-stabilisations, with defining equations given explicitly by \eqref{fixedlambda}. We have thus proved the following.

\begin{proposition}[When is a $\Lambda$-MLE given $f$ the limit $\Lambda$-MLE given an $f$-stabilisation?] \thlabel{answerq2}
Let $f$ denote a sample and $\lambda$ a $\Lambda$-MLE given $f$. Then $$ X_{f,\lambda}^{\operatorname{lim}} \subseteq X_f$$  parametrises those $f$-stabilisations such that the $\Lambda$-MLE given $\widetilde{f}(\epsilon) : = f+ \epsilon f'$ tends to $\lambda$ as $\epsilon$ tends to zero. In particular, the $\Lambda$-MLE $\lambda$ given $f$ is the limit $\Lambda$-MLE given an $f$-stabilisation if and only if $$ X_{f, \lambda}^{\operatorname{lim}} \neq \emptyset.$$ 
\end{proposition}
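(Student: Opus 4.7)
The plan is to read off Proposition~\ref{answerq2} directly from the explicit limit formula proven in \thref{mainresultLambda}\ref{existenceoflimit}. The work of constructing $X_{f,\lambda}^{\operatorname{lim}}$ and identifying its defining equations \eqref{fixedlambda} has already been done in the paragraphs preceding the statement, so the proof is essentially one of unpacking definitions.

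First, I would recall that by \thref{mainresultLambda}\ref{existenceoflimit}, for any $f$-stabilisation $\widetilde{f} = f + f'$ and any child vertex $i$, the limit as $\epsilon \to 0$ of the $\Lambda_i$-MLE given $\widetilde{f}(\epsilon) = f + \epsilon f'$ exists and equals the explicit expression $D_l/c_l$ appearing in the formula, where all quantities depend polynomially on the columns of $f'$ indexed by the parents of~$i$. Thus the limit $\Lambda$-MLE given $\widetilde{f}$ equals the prescribed $\Lambda$-MLE $\lambda = (\lambda_i)_i$ of $f$ if and only if $D_l = c_l \lambda_i$ holds at every child vertex~$i$, which is precisely equation~\eqref{fixedlambda}.

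Second, I would note that both $c_l$ and the entries of $D_l$ are polynomials in the entries of $f'$, since $\adj C'_i(\epsilon)$ and its derivatives evaluated at $\epsilon = 0$ are polynomial expressions in the columns of $E_i$, which are a subset of the columns of~$f'$. Consequently, for each child vertex $i$ the equation $c_l \lambda_i - D_l = 0$ cuts out a closed subvariety $X_{f,\lambda}^{i,\operatorname{lim}}$ of the parameter space $X_f$ from \thref{psfperturb}, and their intersection $X_{f,\lambda}^{\operatorname{lim}} = \bigcap_i X_{f,\lambda}^{i,\operatorname{lim}}$ is again a closed subvariety of $X_f$. By the equivalence just established, $\widetilde{f} = f + f' \in X_f$ has limit $\Lambda$-MLE equal to $\lambda$ if and only if $f' \in X_{f,\lambda}^{\operatorname{lim}}$, which is the first assertion of the proposition.

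Finally, the second assertion is immediate: a $\Lambda$-MLE $\lambda$ given $f$ arises as the limit $\Lambda$-MLE of some $f$-stabilisation precisely when there exists some $f'$ in $X_{f,\lambda}^{\operatorname{lim}}$, i.e.\ when this variety is non-empty. The only subtlety requiring care is that the integer $l$ at each vertex $i$ depends on $f'$, so that \eqref{fixedlambda} should really be understood stratum-wise on $X_f$ according to the smallest $l$ for which the relevant trace is non-zero; however, since \thref{mainresultLambda}\ref{existenceoflimit} guarantees that on each such stratum the limit is well-defined and given by $D_l/c_l$, the cumulative vanishing locus is well-defined as a closed subvariety and the argument goes through. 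This stratification point is the only place where care is needed; once it is addressed, the proposition follows formally.
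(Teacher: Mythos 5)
Your proposal is correct and follows essentially the same route as the paper: the paper also treats \thref{answerq2} as a direct consequence of the limit formula in \thref{mainresultLambda}, with the equations \eqref{fixedlambda} cutting out the closed subvarieties $X_{f,\lambda}^{i,\operatorname{lim}}$ whose intersection is $X_{f,\lambda}^{\operatorname{lim}}$, and the non-emptiness statement then being immediate. Your remark about the dependence of $l$ on $f'$ flags a genuine subtlety that the paper itself glosses over, so including it only strengthens the argument.
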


We can use \thref{answerq2} to answer the analogous question for MLEs rather than $\Lambda$-MLEs, as per \thref{answerq2gen} below which corresponds to \thref{secondmainresult}\ref{ps3}.  

\begin{corollary}[When is an MLE given $f$ the limit MLE given an $f$-stabilisation?] \thlabel{answerq2gen}
Assume an MLE $\alpha$ exists given sample $f$. 
Then $X_{f,\alpha}^{\operatorname{lim}} \subseteq X_f$ parameterises the $f$-stabilisations $\widetilde{f}$ such that the limit MLE given $\widetilde{f}$ is $\alpha$. In particular, $\alpha$ is a limit MLE given an $f$-stabilisation if and only if $X_{f, \alpha}^{\operatorname{lim}} \neq \emptyset$. 
\end{corollary}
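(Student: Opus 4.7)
The plan is to reduce the statement to \thref{answerq2} by handling the $\Omega$-part of the MLE separately from the $\Lambda$-part. Since an MLE $\alpha$ exists given $f$ by assumption, we may write $\alpha = (\lambda, \omega)$ where $\lambda$ is its $\Lambda$-MLE component and $\omega$ its $\Omega$-MLE component. Note that $\omega$ is the \emph{unique} $\Omega$-MLE given $f$ by Proposition~\ref{prop:lambda_and_omega}, and that the subvariety $X_{f,\alpha}^{\operatorname{lim}}$ is defined via equations \eqref{fixedlambda} which depend only on $\lambda$; thus $X_{f,\alpha}^{\operatorname{lim}} = X_{f,\lambda}^{\operatorname{lim}}$ as subvarieties of $X_f$.

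First, I would observe that for any $f$-stabilisation $\widetilde{f} = f + f'$, the limit MLE given $\widetilde{f}$ exists by \thref{mainresult}\ref{partb}, and its $\Omega$-component has entries $\lVert \pi_{\langle f_j : j \to i \rangle}(f_i) - f_i \rVert^2$ for each child vertex $i$, by the computation in the proof of \thref{mainresult}\ref{partc}. But these are precisely the entries of the unique $\Omega$-MLE given $f$, which is $\omega$. Hence the $\Omega$-component of the limit MLE given $\widetilde{f}$ automatically equals the $\Omega$-component of $\alpha$, regardless of the choice of $f$-stabilisation.

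Next, I would conclude that the limit MLE given $\widetilde{f}$ equals $\alpha$ if and only if the limit $\Lambda$-MLE given $\widetilde{f}$ equals $\lambda$. By \thref{answerq2}, this happens if and only if $\widetilde{f} \in X_{f,\lambda}^{\operatorname{lim}}$, which by the identification above is exactly $X_{f,\alpha}^{\operatorname{lim}}$. This proves the first assertion. The ``in particular'' clause is then immediate: $\alpha$ is realised as the limit MLE of some $f$-stabilisation if and only if $X_{f,\alpha}^{\operatorname{lim}}$ contains at least one point.

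There is no real obstacle here beyond bookkeeping: the content is already carried by \thref{mainresult} (which supplies the rigidity of the $\Omega$-part in the limit) and \thref{answerq2} (which parametrises the $\Lambda$-part). The only mild subtlety worth flagging explicitly is that the hypothesis ``$\alpha$ is an MLE given $f$'' is what guarantees that the $\Omega$-MLE given $f$ exists in the first place, so that the limit $\Omega$-values from \thref{mainresult}\ref{partc} can genuinely be interpreted as the $\Omega$-component of an MLE.
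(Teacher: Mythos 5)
Your proposal is correct and follows essentially the same route as the paper: both reduce the statement to \thref{answerq2} for the $\Lambda$-component and then use the uniqueness of the $\Omega$-MLE together with \thref{mainresult} to see that the $\Omega$-part of the limit MLE is forced, so the limit MLE equals $\alpha$ exactly when the limit $\Lambda$-MLE equals $\lambda$. The only cosmetic difference is that you compute the limiting $\Omega$-values explicitly from the proof of \thref{mainresult}, whereas the paper simply cites \thref{mainresult}\ref{partc} and the uniqueness of the MLE given $f$ with a fixed $\Lambda$-component.
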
 

\begin{proof}
Let $\lambda$ denote the $\Lambda$-MLE component of $\alpha$.
By \thref{answerq2} we know that $\widetilde{f}$ lies in $X_{f, \lambda}^{\operatorname{lim}}$ if and only if its limit $\Lambda$-MLE is $\lambda$. But by \thref{mainresult}\ref{partc} we also know that the limit MLE is an MLE given $f$, as by assumption $f$ has at least one MLE. Since $\Omega$-MLEs are unique, there is a unique MLE given $f$ with a fixed $\Lambda$-MLE component. The MLE $\alpha$ has this property therefore the limit MLE given $\widetilde{f}$ is $\alpha$ as required. 
\end{proof}

\thref{answerq2gen} shows that $X_{f,\alpha}^{\operatorname{lim}}$ parametrises those $f$-stabilisations $\widetilde{f}$ in $X_f$ satisfying the property that the limit MLE given $\widetilde{f}$ is $\alpha$, which leads us naturally to the following

\begin{definition}[Parameter space of $f$-stabilisations with limit MLE $\alpha$] \thlabel{defofps2}
Let $f$ denote a sample and $\alpha$ an MLE given $f$. Then the closed subvariety $$X_{f, \alpha}^{\operatorname{lim}} \subseteq X_f$$ of the parameter space of $f$-stabilisations is the \emph{parameter space of $f$-stabilisations $\widetilde{f}$ such that $\alpha$ is the limit MLE given $\widetilde{f}$}. 
\end{definition}

\section{Linear regression} \label{sec:linearregression}

We illustrate our results for star-shaped graphs $\mathcal{G}$. These are connected graphs with a single child vertex, see Figure~\ref{fig:star}. Statistical models determined by graphs of this type are linear regression models: they express the child node as a linear combination of the parent nodes plus noise. In Section \ref{subsec:bounded} we consider the case where the MLE exists. In Section \ref{subsec:unbounded} we consider the case where the MLE does not exist.

\subsection{When the MLE exists} \label{subsec:bounded} 

We show that for a star-shaped $\Gcal$, if the MLE exists given a sample $f$ then the MLE given any $f$-stabilisation is the same: the minimal norm MLE given $f$. We apply results from Section \ref{sec:MLEofsamplestab} to prove this. 
First we show that the conditions given in \thref{OmegaMLEofstab} are satisfied for all $f$-stabilisations. These characterise when the MLE given an $f$-stabilisation is an MLE given $f$. We prove this in \thref{condalwayssatisfied}. Secondly we show that only one MLE given $f$ can be obtained in this way and describe it explicitly, see \thref{specialcasegraph} below. This gives an explicit description of the parameter spaces $X_{f, \alpha}$ from \thref{defofps}, for all samples $f$ and MLEs $\alpha$ given $f$, see \thref{reformulation}.

\begin{proposition}[The MLE given any $f$-stabilisation is an MLE given $f$] \thlabel{condalwayssatisfied}
Fix a star-shaped graph $\mathcal{G}$ on $m$ vertices. 
Let $f$ be a sample and $\widetilde{f}$ a stabilisation of $f$. Assume the MLE given $f$ exists. 
Then the MLE given $\widetilde{f}$ is an MLE given $f$.
\end{proposition}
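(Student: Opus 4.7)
The plan is to reduce, via \thref{OmegaMLEofstab}, to verifying two conditions at the unique child vertex $i$ of $\Gcal$, and then to exploit the hypothesis that the MLE given $f$ exists, which by \thref{thm:DAG_Ymatrix} says $f_i \notin \langle f_j : j \to i \rangle$. Since $\Gcal$ is star-shaped, $i$ is the only child vertex, so I need only check
\[v_i \in \langle v_j : j \to i \rangle \quad \text{and} \quad \overline{f_i} + v_i \in \langle f_j + v_j : j \to i \rangle\]
for this single $i$, where $v_j$ are the columns of the $f$-perturbation $f' = \widetilde{f} - f$.

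For the first condition, I would argue that $v_i = 0$. The hypothesis $f_i \notin \langle f_j : j \to i \rangle$ forces every linear dependence among the columns of $f$ to have zero $i$-th coefficient, so $\ker f \subseteq e_i^\perp$, where $e_i$ is the $i$-th standard basis vector of $\KK^m$. Taking orthogonal complements gives $e_i \in (\ker f)^\perp = \ker f'$ by condition (ii) of \thref{sampleperturbandstab}. Since $f'(e_i) = v_i$, this yields $v_i = 0$ and hence $v_i \in \langle v_j : j \to i \rangle$.

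For the second condition, after substituting $v_i = 0$ it reduces to $\overline{f_i} \in \langle f_j + v_j : j \to i \rangle$. The crucial step is to pick the right coefficients expressing $\overline{f_i}$ as a linear combination of the $f_j$. Let $A$ be the $n \times (m-1)$ matrix whose columns are the $f_j$ for $j \to i$, and take the minimum-norm coefficient vector $\hat{\lambda}_i = A^+ f_i$, so that $A\hat{\lambda}_i = \overline{f_i}$ and $\hat{\lambda}_i \in (\ker A)^\perp$ by a standard property of the Moore--Penrose pseudo-inverse. Lift $\hat{\lambda}_i$ to $\tilde{\lambda}_i \in \KK^m$ by inserting a zero in position $i$. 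Then both $\tilde{\lambda}_i$ and $\ker f$ lie in $e_i^\perp$, and under the obvious identification $e_i^\perp \cong \KK^{m-1}$ one has $\tilde{\lambda}_i \leftrightarrow \hat{\lambda}_i$ and $\ker f \leftrightarrow \ker A$; therefore $\tilde{\lambda}_i \in (\ker f)^\perp = \ker f'$. It follows that $\sum_{j \to i} \hat{\lambda}_{ij} v_j = f'(\tilde{\lambda}_i) = 0$, so
\[\sum_{j \to i} \hat{\lambda}_{ij}(f_j + v_j) = \overline{f_i} + 0 = \overline{f_i} \in \langle f_j + v_j : j \to i \rangle,\]
and \thref{OmegaMLEofstab} delivers the conclusion.

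The main conceptual point, and the only non-routine step, is recognising that one must use the minimum-norm choice of $\Lambda_i$-MLE coefficients: among the many valid $\hat{\lambda}_i$ expressing $\overline{f_i}$ as a combination of the $f_j$, the pseudo-inverse solution is uniquely characterised by being orthogonal to $\ker A$, and this orthogonality is exactly what makes the perturbation columns $v_j$ cancel when combined with these coefficients.
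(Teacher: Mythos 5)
Your proof is correct, but it reaches the key span condition by a different route than the paper. For the first condition both arguments coincide in substance: you and the paper each reduce $v_i=0$ to showing $e_i\in(\ker f)^{\perp}=\ker f'$ (the paper exhibits $e_i$ up to scale as $f\T(f_i-\overline{f_i})$, you observe that $f_i\notin\langle f_j: j\to i\rangle$ forces every column dependence of $f$ to have zero $i$-th coordinate, so $\ker f\subseteq e_i^{\perp}$ — here you are implicitly using that in a star-shaped graph every other vertex is a parent of $i$, which is worth stating). For the second condition the paper argues by a dimension count: full column rank of $\widetilde f$ together with $\dim\langle f_j:j\to i\rangle=r-1$ and $\dim\langle v_1,\dots,v_m\rangle=m-r$ forces $\langle f_j+v_j:j\to i\rangle=\langle f_j:j\to i\rangle\oplus\langle v_j:j\to i\rangle$, whence $\overline{f_i}+\overline{v_i}$ lies in the span without ever naming coefficients. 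You instead exhibit explicit coefficients, the minimal-norm solution $\hat\lambda_i=A^{+}f_i$, and use $\hat\lambda_i\perp\ker A$ together with $\ker f'=(\ker f)^{\perp}$ (and $\ker f$ being the zero-extension of $\ker A$, again via star-shapedness and $v_i=0$) to kill the $\sum_{j\to i}\hat\lambda_{ij}v_j$ term. Both are valid; the paper's count is shorter and coordinate-free, while your choice of coefficients buys more: since $\widetilde f$ has full column rank its $\Lambda$-MLE is unique, so your computation essentially also shows that the $\Lambda$-MLE given $\widetilde f$ is the minimal-norm $\Lambda$-MLE given $f$, i.e.\ it nearly delivers \thref{specialcasegraph} (and hence \thref{linearregression}) for free, which the paper establishes by a separate argument.
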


\begin{proof}
Without loss of generality the unique child vertex is vertex $m$. Let $f'$ be any $f$-perturbation. To show that the MLE given $\widetilde{f} = f+ f'$ is an MLE given $f$, by \thref{OmegaMLEofstab} it suffices to show that $$v_m \in \langle v_i :i \to m \rangle \text{ and } \overline{ f_m} +  \overline{v_m} \in \langle f_i + v_i :i \to m \rangle.$$ We will starting by proving 
\begin{equation}
\langle   f_i + v_i :i \to m \rangle  = \langle f_i  :i \to m \rangle \oplus \langle v_i :i \to m \rangle, \label{equalityofsubspaces}
\end{equation} which implies that $\overline{ f_m} + \overline{v_m} \in \langle f_i + v_i :i \to m \rangle$, since $\overline{ f_m}$ and $\overline{v_m}$ lie in $\langle f_i :i \to m \rangle $ and $\langle v_i :i \to m \rangle$ respectively.

We have 
    \begin{equation}
        \langle   f_i + v_i :i \to m \rangle  \subseteq \langle f_i  :i \to m \rangle \oplus \langle v_i :i \to m \rangle \subseteq \langle f_i :i \to m \rangle \oplus \langle v_1, \hdots, v_m \rangle. \label{inclusions}
    \end{equation}
The left-hand side has dimension equal to the number of parents of $m$, namely $m-1$, since the rows of $\widetilde{f}$ are linearly independent. We now show that the right-hand side has dimension less than or equal to $m-1$. 

By definition of an $f$-perturbation, the map $f': \KK^m \to \KK^n$ has kernel of dimension $r: = \dim \im f$. Then the span of the columns $v_1, \hdots, v_m$ of $f'$ has dimension $m-r$, and the span of the columns $f_1, \hdots,  f_m$ of $f$ has dimension $r$. Since $f$ is semistable we know that $ f_m \notin \langle f_i :i \to m \rangle$. Therefore $\operatorname{dim} \langle f_i :i \to m \rangle = r-1$.  It follows that  the right-hand side of \eqref{inclusions} has dimension $m-1$.
As a result, the inclusions in \eqref{inclusions} above must all be equalities, giving \eqref{equalityofsubspaces}.

It remains to show that $v_m \in \langle v_i :i \to m \rangle$. In fact we will show the stronger statement that $v_m = 0$. Recall that $f'$ has kernel equal to $(\ker  f)^{\perp}.$ Therefore to show that $v_m = 0$, it suffices to show that the standard basis vector $e_m : = (0, \hdots, 0,1) \in \KK^m$ lies in $(\ker f)^{\perp} = \im f^T$, as $v_m = f'(e_m)$.  Since $f$ is semistable, we know that $ f_m \notin \langle f_i :i \to m \rangle$, so that $x: =  f_m - \overline{ f_m} \neq 0$. Note that $x \in \langle  f_1, \hdots,  f_m \rangle$. By construction $x \in \langle f_i  :i \to m \rangle^{\perp}$, therefore $f_i \cdot x = 0$ for all $ i \to m$.  Note also that $ f_m \cdot x \neq 0$ since otherwise $x \in \langle  f_1, \hdots,  f_m \rangle^{\perp} \cap \langle  f_1, \hdots,  f_m \rangle = \{0\}$ which contradicts $x \neq 0$. Therefore $f^T(x) = e_m$, so that $e_m \in \im  f^T$ as required. 
\end{proof}

We now strengthen \thref{condalwayssatisfied}. That is, in \thref{specialcasegraph} below we show that for a sample $f$ such that an MLE given $f$ exists, not only do we have that the MLEs given $\widetilde{f}$ are MLEs given $f$ 
for any $f$-stabilisation $\widetilde{f}$, but also that only one MLE given $f$ can be obtained in this way, namely the minimal norm MLE given $f$. This is \thref{linearregression}.  

\begin{proposition}[The MLE given any $f$-stabilisation is the minimal norm MLE given $f$]  \thlabel{specialcasegraph} 
Fix a star-shaped graph $\mathcal{G}$ on $m$ vertices and let $f$ denote a sample such that an MLE given $f$ exists. Then the $\Lambda$-MLE given any stabilisation $\widetilde{f}$ of $f$ is the minimal norm $\Lambda$-MLE given $f$. 
\end{proposition}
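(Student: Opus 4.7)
My plan is to combine Proposition~\thref{condalwayssatisfied} with the orthogonality structure forced by the definition of an $f$-perturbation to pin down exactly which $\Lambda$-MLE the stabilisation selects.

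Assume without loss of generality that the unique child vertex is $m$, and let $A$ denote the $n \times (m-1)$ matrix with columns $f_1, \ldots, f_{m-1}$. The set of $\Lambda$-MLEs given $f$ is the affine solution space $\lambda_0 + \ker A$ to $A\lambda = \overline{f_m}$, and the minimum $2$-norm one is characterised as the unique solution lying in $(\ker A)^{\perp}$. Existence of the MLE given $f$ means $f_m \notin \langle f_1,\ldots,f_{m-1}\rangle$, from which a short direct argument gives the identification $\ker f = \ker A \times \{0\} \subset \KK^m$.

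Now let $\widetilde f = f + f'$ be a stabilisation with (necessarily unique) $\Lambda$-MLE $\lambda$. The proof of Proposition~\thref{condalwayssatisfied} established that $v_m = 0$ and that $\langle f_j + v_j : j \to m\rangle = \langle f_j : j \to m\rangle \oplus \langle v_j : j \to m\rangle$; combined with the orthogonality of the $f_j$ and $v_j$, this yields $\pi_{\langle f_j + v_j\rangle}(f_m + v_m) = \overline{f_m}$. Hence the defining equation of $\lambda$ becomes
\begin{equation*}
\sum_{i=1}^{m-1} \lambda_i (f_i + v_i) \;=\; \overline{f_m}.
\end{equation*}
Projecting both sides onto the mutually orthogonal subspaces $\langle f_j : j \to m\rangle$ and $\langle v_j : j \to m\rangle$ splits this single relation into two: the particular-solution statement $A\lambda = \overline{f_m}$ (recovering the conclusion of Proposition~\thref{condalwayssatisfied} that $\lambda$ is a $\Lambda$-MLE given $f$), together with the crucial extra relation $\sum_{i=1}^{m-1} \lambda_i v_i = 0$.

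This extra relation says precisely that $(\lambda, 0) \in \ker f'$. Using the $f$-perturbation identity $\ker f' = (\ker f)^{\perp}$ together with the identification $\ker f = \ker A \times \{0\}$, it translates into $\lambda \perp \ker A$, i.e.\ $\lambda \in (\ker A)^{\perp}$, which is exactly the characterisation of the minimum $2$-norm $\Lambda$-MLE. The main content of the argument is the clean orthogonal split of the defining equation, which is only available because the definition of an $f$-perturbation simultaneously enforces $\langle f_j\rangle \perp \langle v_j\rangle$ and converts the minimum-norm condition into the kernel relation $(\ker f')^{\perp} = \ker f$; everything else is routine bookkeeping, and I do not anticipate a substantial obstacle beyond verifying the identification $\ker f \cong \ker A$ under the hypothesis that the MLE given $f$ exists.
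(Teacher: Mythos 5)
Your proof is correct and follows essentially the same route as the paper: invoke \thref{condalwayssatisfied} (including $v_m=0$ and the direct sum decomposition), split the defining equation of the $\Lambda$-MLE by orthogonality into $A\lambda = \overline{f_m}$ and $\sum_i \lambda_i v_i = 0$, and then convert the second relation into $\lambda \in (\ker A)^{\perp}$. The only (minor) difference is the last step: the paper proves the subspace equality $(\ker f_m)^{\perp} = \ker f'_m$ via row-orthogonality plus a dimension count, whereas you deduce $\lambda \in (\ker A)^{\perp}$ directly from $(\lambda,0) \in \ker f' = (\ker f)^{\perp}$ and the identification $\ker f = \ker A \times \{0\}$, which avoids the dimension count.
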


\thref{reformulation} below gives an explicit description of the parameter space $X_{f,\alpha}$ from Section \ref{subsec:whichMLEs}, for any sample $f$ for which $\alpha$ is an MLE given $f$. 

\begin{corollary} \thlabel{reformulation} 
Fix a connected DAG $\mathcal{G}$ on $m$ vertices with a unique child vertex, and let $f$ denote a sample such that an MLE given $f$ exists. Let $\alpha$ denote any MLE given $f$. Then 
$$X_f \supseteq X_{f, \alpha} = \begin{cases} \emptyset & \text{if $\alpha$ is not the minimal norm MLE given $f$;} \\
 X_{f} & \text{if $\alpha$ is the minimal norm MLE given $f$.} \end{cases} $$
\end{corollary}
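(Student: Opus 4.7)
The plan is to deduce this corollary directly from the two preceding results \thref{condalwayssatisfied} and \thref{specialcasegraph}, together with the defining property of $X_{f,\alpha}$ given in \thref{MLEfromMLEofstab}. By construction, $X_{f,\alpha}$ parametrises exactly those $f$-stabilisations $\widetilde{f}$ for which the MLE given $\widetilde{f}$ equals $\alpha$. So the task reduces to identifying, for a star-shaped $\Gcal$, which MLEs given $f$ actually arise as the MLE given some $f$-stabilisation.

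First I would observe that for a star-shaped graph, \thref{condalwayssatisfied} guarantees that \emph{every} $f$-stabilisation $\widetilde{f}$ produces an MLE that is an MLE given $f$, so there are no stabilisations to discard on existence grounds. Next, \thref{specialcasegraph} pins down the $\Lambda$-component of that MLE uniquely as the minimal-norm $\Lambda$-MLE given $f$. Because the $\Omega$-MLE given $f$ is unique whenever it exists (Proposition~\ref{prop:lambda_and_omega}), and because the MLE given $\widetilde{f}$ is an MLE given $f$, the $\Omega$-components of the MLE given $\widetilde{f}$ and of $\alpha$ automatically agree. Hence the MLE given $\widetilde{f}$ is completely determined, as an MLE given $f$, by its $\Lambda$-component.

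Combining these two facts gives the dichotomy: if $\alpha$ is not the minimal-norm MLE given $f$, then its $\Lambda$-component is not the minimal-norm $\Lambda$-MLE, so by \thref{specialcasegraph} no $f$-stabilisation can have $\alpha$ as its MLE, whence $X_{f,\alpha} = \emptyset$. If instead $\alpha$ is the minimal-norm MLE given $f$, then by \thref{condalwayssatisfied} and \thref{specialcasegraph} every $f$-stabilisation produces the MLE with $\Lambda$-component equal to the minimal-norm $\Lambda$-MLE, which must be $\alpha$; hence every point of $X_f$ lies in $X_{f,\alpha}$, giving $X_{f,\alpha} = X_f$.

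There is no real obstacle here beyond assembling the earlier results correctly; the only subtle point to make explicit is that matching the $\Lambda$-components is enough to match the full MLEs, which uses both that $\Omega$-MLEs are unique and that \thref{condalwayssatisfied} places the MLE given $\widetilde{f}$ among the MLEs given $f$ (so its $\Omega$-component has no freedom to differ from that of $\alpha$).
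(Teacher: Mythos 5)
Your proposal is correct and follows essentially the same route as the paper, which states \thref{reformulation} as an immediate consequence of \thref{condalwayssatisfied} and \thref{specialcasegraph} together with the defining property of $X_{f,\alpha}$. The only addition is that you spell out why matching $\Lambda$-components suffices (uniqueness of the $\Omega$-MLE), a point the paper leaves implicit.
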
 

\begin{proof}[Proof of \thref{specialcasegraph}] By relabeling the vertices of $\mathcal{G}$ if necessary we can assume that $m$ is the unique child vertex.  Let $\widetilde{f} = f+f'$ denote a stabilisation of $f$. 

The MLE given $\widetilde{f}$ is an MLE given $f$, by \thref{condalwayssatisfied}. Moreover, in the proof of \thref{condalwayssatisfied} we have also shown that $v_m =0$, where $v_m$ is the last column of $f'$. To show that the MLE given $\widetilde{f}$ is the minimal norm MLE given $f$, recall that the $\Lambda$-MLE $\{\lambda_{im}\}_{i \to m}$ given $\widetilde{f}$ is determined by the equation $$ \overline{ f_m} + \overline{v_m} = \sum_{i \to m} \lambda_{im} f_i + \sum_{i \to m} \lambda_{im} v_i.$$ 

Since $\overline{v_m} = \overline{0} = 0$, the coefficients $\lambda_{im}$ satisfy $\sum_{i \to m} \lambda_{im} f_i = \overline{ f_m}$ and $\sum_{i \to m} \lambda_{im} v_i = 0$, using the fact that the $v_i$ and $f_i$ are orthogonal to each other. The latter equation is equivalent to asking that the vector $\lambda_m = (\lambda_{1m}, \lambda_{2m}, \hdots, \lambda_{m-1,m})$ lies in $\operatorname{ker} {f'_m}$ where $f'_m$ is obtained from $f'$ by removing the last column. Let $f_m$ denote the matrix obtained by removing the last column of $f$. Then the minimal norm MLE given $f$ has as its $\Lambda$-MLE the solution to the system $f_m  x = \overline{f_m}$ which lies in $(\ker  f_m )^{\perp}$. We claim now that $(\ker  f_m )^{\perp} = \ker  {f'_m} .$ 

By definition of a sample perturbation, we know that $(\ker f)^{\perp} = \ker f'$. Since $v_m = 0$, the rows of ${f'_m}$ and of $f_m$ are also orthogonal to each  other, therefore $(\ker f_m)^{\perp} \subseteq \ker f_m'$. To show that equality holds, we calculate the dimension of each side. Since $f_m \notin \langle f_i : i \to m \rangle$ by semistability of $f$, on the left-hand side we have $$\dim (\ker  f_m)^{\perp} = \dim \im f_m = \dim \im f - 1.$$ Since $v_m = 0$, we also have that $\dim \ker f_m' = \dim \ker f' - 1$. So on the right-hand side we have $$ \dim \ker f_m' = \dim \ker f' - 1 = \dim (\ker f)^{\perp} -1 = \dim \im f - 1.$$  Thus $(\ker f_m)^{\perp} = \ker f_m'$. Hence the MLE given $\widetilde{f}$ is the minimal norm MLE given~$f$. 
\end{proof}

For general DAG models we do not expect the above results to continue to hold for all samples $f$ such that an MLE given $f$ exists. It would be interesting to obtain counterexamples.

\subsection{When the MLE does not exist} \label{subsec:unbounded}

Since the $\Lambda$-MLE always exists, we study which $\Lambda$-MLEs can be achieved as the $\Lambda$-MLE given a stabilisation. We may also ask which $\Lambda$-MLEs can be achieved as the limit $\Lambda$-MLE given a stabilisation. We address both these questions through specific examples. 

\thref{unstableexample1} below gives an example of a DAG $\mathcal{G}$ and sample $f$ such that the $\Lambda$-MLE given any $f$-stabilisation is a $\Lambda$-MLE given $f$.  It also shows that any $\Lambda$-MLE given $f$ can be obtained as the $\Lambda$-MLE given an $f$-stabilisation. This is in contrast with \thref{specialcasegraph} in Section \ref{subsec:unbounded} above where only one $\Lambda$-MLE can be obtained. Finally, it provides an explicit description of the varieties $X_{f, \lambda}^{\operatorname{lim}}$ appearing in \thref{answerq2}. 

\begin{proposition}  \thlabel{unstableexample1} 
Let $\mathcal{G}$ denote the DAG $ 1 \to 3 \leftarrow 2$, and let $f$ denote the sample with first column $ f_1 =(1,0,\hdots,0)$ and zero second and third column. Then the $\Lambda$-MLE given any $f$-stabilisation is an MLE given $f$, and moreover any $\Lambda$-MLE $\lambda$ of $f$ can be achieved as the $\Lambda$-MLE given a suitable $f$-stabilisation. In addition, given a $\Lambda$-MLE $\lambda = (0,b)$ of $f$, we have: $$ X_{f,\lambda}^{\operatorname{lim}} = \{f' \in X_f :v_2 \cdot v_3 = b (v_2 \cdot v_2) \},$$ where $v_2$ and $v_3$ are the second and third columns of $f'$ respectively. 
\end{proposition}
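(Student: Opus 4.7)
The plan is to make everything explicit, using the fact that $f$ has a particularly simple form. First I would describe $f$-perturbations: since $\im f = \langle e_1 \rangle$ and $\ker f = \langle b_2, b_3 \rangle$ (where the $b_i$ are the standard basis vectors of $\KK^3$), the conditions of \thref{sampleperturbandstab} force $v_1 = 0$ and $v_2, v_3$ to be linearly independent vectors of $\langle e_2, \ldots, e_n \rangle \subset \KK^n$. Hence a stabilisation $\widetilde{f}$ has columns $e_1, v_2, v_3$ of this form; in particular $v_2, v_3 \perp e_1$.

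Next I would compute the $\Lambda$-MLEs given $f$: the only non-trivial step is projecting $f_3 = 0$ onto $\langle f_1, f_2 \rangle = \langle e_1 \rangle$, which gives $0$. So the $\Lambda$-MLEs given $f$ are exactly $\{(0, b) : b \in \KK\}$. I would then compute the $\Lambda$-MLE given $\widetilde{f}$ by projecting $v_3$ onto $\langle e_1, v_2 \rangle$. Since $v_2, v_3 \perp e_1$, we have $\langle e_1, v_2 \rangle = \langle e_1 \rangle \oplus \langle v_2 \rangle$ and
\[
\pi_{\langle e_1, v_2 \rangle}(v_3) \;=\; \pi_{\langle v_2 \rangle}(v_3) \;=\; \frac{v_2 \cdot v_3}{v_2 \cdot v_2}\, v_2,
\]
so the $\Lambda$-MLE given $\widetilde{f}$ is $\big(0,\,(v_2 \cdot v_3)/(v_2 \cdot v_2)\big)$. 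This is of the form $(0, b)$, establishing the first claim. For the second claim, given any $b \in \KK$, the choice $v_2 = e_2,\; v_3 = b\, e_2 + e_3$ (valid since $n \geq 3$ by \thref{nbiggerthanm}) is a legitimate $f$-perturbation and yields $(v_2 \cdot v_3)/(v_2 \cdot v_2) = b$.

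Finally, for the description of $X_{f,\lambda}^{\operatorname{lim}}$, I would repeat the projection computation for $\widetilde{f}(\epsilon) = f + \epsilon f'$, whose columns are $e_1, \epsilon v_2, \epsilon v_3$. The projection of $\epsilon v_3$ onto $\langle e_1, \epsilon v_2 \rangle = \langle e_1, v_2 \rangle$ equals $\frac{(\epsilon v_3)\cdot(\epsilon v_2)}{(\epsilon v_2)\cdot(\epsilon v_2)}\,\epsilon v_2$, and the $\epsilon$'s cancel to give $\lambda_{32}(\epsilon) = (v_2 \cdot v_3)/(v_2 \cdot v_2)$ independent of $\epsilon$, consistent with \thref{mainresultLambda}\ref{independent}. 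The limit as $\epsilon \to 0$ thus equals $(0, b)$ if and only if $v_2 \cdot v_3 = b\,(v_2 \cdot v_2)$, which is the stated description of $X_{f,\lambda}^{\operatorname{lim}}$.

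There is no serious obstacle; the proof is essentially a direct computation. The only subtle point is to recognise that the orthogonality built into the definition of an $f$-perturbation makes the projection split as an orthogonal decomposition, which both simplifies the formula and causes the $\epsilon$-dependence to disappear.
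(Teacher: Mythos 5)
Your proposal is correct and follows essentially the same route as the paper: characterise $f$-perturbations as $v_1=0$ with $v_2,v_3$ linearly independent and orthogonal to $e_1$, compute the $\Lambda$-MLE given $\widetilde f$ by an orthogonal projection onto $\langle e_1,v_2\rangle$ to get $\bigl(0,\,(v_2\cdot v_3)/(v_2\cdot v_2)\bigr)$, realise any $(0,b)$ by an explicit choice of $v_2,v_3$, and note the $\epsilon$-independence of the computation to identify $X_{f,\lambda}^{\operatorname{lim}}$ via \thref{answerq2}. The only differences are cosmetic (you exhibit an explicit perturbation achieving $(0,b)$ and redo the projection for $\widetilde f(\epsilon)$, where the paper simply observes the MLE and limit MLE coincide), so no changes are needed.
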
 

\begin{proof}
The $\Lambda$-MLEs given $f$ are pairs of the form $(0,b)$ for $b \in \KK$. We now show that the $\Lambda$-MLE given any $f$-stabilisation has this form. A map $f': \KK^m \to \KK^n$ with columns $v_1, v_2,v_3$ is an $f$-perturbation if and only if $v_1 = 0$ (to ensure $\im f' \subseteq (\im f)^\perp$), $v_2, v_3$ have zero first entry (to ensure that $(\ker f')^\perp \subseteq \ker f$) and are linearly independent (to give equality $(\ker f')^{\perp} = \ker f$). Choose such an $f$-perturbation $f'$. 

Then the $\Lambda$-MLE given $\widetilde{f} = f+f'$ is the pair $(c,d)$ such that
    $$ \pi_{\langle  f_1 + v_1,  f_2 + v_2 \rangle}( f_3 + v_3) =  c  f_1 + d v_2.$$ But $$ \pi_{\langle  f_1 + v_1,  f_2 + v_2 \rangle}(\overline{ f_3} + \overline{v_3})  = \pi_{\langle  f_1, v_2 \rangle}(\overline{v_3}) = \overline{v_3},$$ since $\overline{v_3} \in \langle v_2 \rangle$. Here $\overline{ f_3} = \pi_{\langle  f_1,  f_2 \rangle}( f_3) $ and $\overline{v_3} = \pi_{\langle v_1, v_2 \rangle}(v_3)$. 
Therefore  $  c  f_1 + d v_2 = \overline{v_3} \in \langle v_2 \rangle$. Since $v_2$ and $ f_1$ are orthogonal, it follows that $c=0$ and that $d v_2 = \overline{v_3}$. Therefore $d = v_2 \cdot v_3 / v_2 \cdot v_2$. In other words, the $\Lambda$-MLE given $\widetilde{f}$ is the pair $(0, v_2 \cdot v_3 / v_2 \cdot v_2)$, which is a well-defined $\Lambda$-MLE given $f$. Note that we could also obtain this result by showing instead that the condition of \thref{MLEofstab} holds, but the direct proof we have given also proves the second part of \thref{unstableexample1}. Indeed, given any $\Lambda$-MLE $(0,b)$ of $f$, we can always find an $f$-perturbation $f'$ such that $v_2 \cdot v_3 = b ( v_2 \cdot v_2)$. 

The equation above defines a quadratic $Q_b$ in $X_f$, and the $\Lambda$-MLE  given $\widetilde{f}$ is $(0,b)$ (which coincides with the limit $\Lambda$-MLE given $\widetilde{f}$) if and only if $\widetilde{f} \in Q_b$. Therefore setting $\lambda=(0,b)$, we have $$ X_{f,\lambda}^{\operatorname{lim}} = X_f \cap Q_b = \{ f' \in X_f :\langle v_2, v_3 \rangle = b \langle v_2, v_2 \rangle \}. \qedhere$$ 
\end{proof} 

We now give an example of a sample $f$ such that the $\Lambda$-MLE given any $f$-stabilisation is never a $\Lambda$-MLE given $f$. We use this example to illustrate \thref{mainresultLambda}, by describing the $\Lambda$-MLE given $\widetilde{f}(\epsilon) = f+ \epsilon f'$ for any $f$-stabilisation $\widetilde{f} = f +f'$ and its limit as $\epsilon \to 0$.

\begin{proposition} \thlabel{unstableexample2} 
Let $\mathcal{G}$ denote the DAG $1 \to 3 \leftarrow 2$. Let $f$ denote the sample with first column $ f_1= (1,0,\hdots, 0)$, second column $ f_2 = (1,1,0, \hdots, 0)$ and third column $ f_3 = (2,1,0,\hdots, 0)$, with unique $\Lambda$-MLE given $f$ equal to $(1,1)$. Then the $\Lambda$-MLE given any $f$-stabilisation is not $(1,1)$. Moreover, the $\Lambda$-MLE given $\widetilde{f}(\epsilon) = f + \epsilon f'$ for any $f$-stabilisation $\widetilde{f} = f+f'$ and $\epsilon \neq 0$ is $$ \left( \frac{1 - 2 \epsilon^2 (v \cdot v)}{1 + \epsilon^2 (v \cdot v)} , 1 \right),$$ which tends to $(1,1)$ as $\epsilon \to 0$. 
\end{proposition}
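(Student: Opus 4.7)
The plan is to parametrise the space of $f$-perturbations explicitly in this special case and then solve the normal equations for the $\Lambda$-MLE given $\widetilde{f}(\epsilon)$ by direct linear algebra.

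First, I would record the structure of $f$. Since $f_3 = f_1 + f_2$, we have $\operatorname{rank}(f) = 2$, $\im f = \langle e_1, e_2\rangle$, and $\ker f = \langle (-1,-1,1)\rangle$; correspondingly, the unique $\Lambda$-MLE given $f$ is $(1,1)$.

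Second, I would classify all $f$-perturbations. By \thref{sampleperturbandstab}, an $f$-perturbation $f'$ has columns $v_1,v_2,v_3 \in (\im f)^\perp = \langle e_3,\ldots,e_n\rangle$ and satisfies $\ker f' = (\ker f)^\perp = \{(x,y,z) : x+y = z\}$. Since $(1,0,1)$ and $(0,1,1)$ span this kernel, the equations $f'(e_1+e_3) = 0$ and $f'(e_2+e_3) = 0$ force $v_1 + v_3 = 0$ and $v_2 + v_3 = 0$; hence $v_1 = v_2 = v$ and $v_3 = -v$ for some nonzero $v \in \langle e_3,\ldots,e_n\rangle$.

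Third, I would solve the normal equations for $\widetilde{f}(\epsilon) = f + \epsilon f'$. Using $f_i\cdot v = 0$ and writing $k := v\cdot v$, the identity $\widetilde{f}_i(\epsilon)\cdot\widetilde{f}_j(\epsilon) = f_i\cdot f_j + \epsilon^2\, v_i\cdot v_j$ gives
$\widetilde{f}_1\cdot\widetilde{f}_1 = \widetilde{f}_1\cdot\widetilde{f}_2 = 1+\epsilon^2 k$, $\widetilde{f}_2\cdot\widetilde{f}_2 = 2+\epsilon^2 k$, $\widetilde{f}_1\cdot\widetilde{f}_3 = 2-\epsilon^2 k$, $\widetilde{f}_2\cdot\widetilde{f}_3 = 3-\epsilon^2 k$. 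The normal equations then read
\begin{align*}
(1+\epsilon^2 k)(\lambda_{31}+\lambda_{32}) &= 2-\epsilon^2 k,\\
(1+\epsilon^2 k)\lambda_{31} + (2+\epsilon^2 k)\lambda_{32} &= 3-\epsilon^2 k,
\end{align*}
and upon subtraction yield $\lambda_{32} = 1$; substituting back gives $\lambda_{31} = (1-2\epsilon^2 k)/(1+\epsilon^2 k)$, as claimed.

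Finally, I would draw both conclusions of the proposition. Taking $\epsilon \to 0$ recovers $(1,1)$, in agreement with \thref{mainresultLambda}\ref{limit}. Setting $\epsilon = 1$ yields $\lambda_{31} = (1-2k)/(1+k)$, which equals $1$ if and only if $k = 0$; but $v \neq 0$ forces $k > 0$, so the $\Lambda$-MLE given any $f$-stabilisation differs from $(1,1)$. The only subtle step is pinning down the constraint $v_1 = v_2 = -v_3$ coming from $(\ker f')^\perp = \ker f$; the rest is routine computation.
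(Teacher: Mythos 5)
Your proposal is correct and follows essentially the same route as the paper: first pin down that every $f$-perturbation has columns $(v,v,-v)$ (up to the sign convention, the paper's $(-v,-v,v)$) with $v$ a nonzero vector supported on coordinates $3,\dots,n$, then compute the $\Lambda$-MLE of $\widetilde{f}(\epsilon)$ explicitly and read off both claims from the closed-form answer. The only cosmetic difference is that you solve the normal equations via the Gram matrix, whereas the paper orthogonalises the spanning set as $\langle f_1 - v, e_2\rangle$ and applies the projection formula (and cites \thref{MLEofstab} for the ``not $(1,1)$'' part); your derivation, including the use of $v\cdot v>0$ for $v\neq 0$, is sound.
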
 

\begin{proof} 
Let $f'$ denote an $f$-perturbation. Then $f'$ has columns $-v,-v,v$ for some non-zero $v$ 
with zero first and second entries. 
The $\Lambda$-MLE given $\widetilde{f} = f+ f'$ is the pair $(\alpha, \beta)$ such that $$ \pi_{\langle  f_1 -v,  f_2 - v \rangle} ( f_3 + v) = \alpha (  f_1 -v) + \beta(  f_2 - v).$$  We claim that the $\Lambda$-MLE given $\widetilde{f}$ is not $(1,1)$, the unique $\Lambda$-MLE given $f$. By \thref{MLEofstab}, this follows from the fact that  $ f_3 + v$ does not lie in $\langle  f_1 -v,  f_2 - v \rangle$. 

We can check directly that the $\Lambda$-MLE given $\widetilde{f}$ is not a $\Lambda$-MLE given $f$. To calculate the $\Lambda$-MLE given $\widetilde{f}$, observe that $ \langle   f_1 -v,  f_2 - v \rangle = \langle  f_1 - v, e_2 \rangle,$ where $e_2$ is the second standard basis vector in $\KK^n$. Since the vectors in the latter span are orthogonal, we have: \begin{align*} 
\pi_{\langle  f_1 -v,  f_2 - v \rangle} ( f_3 + v)  & = \pi_{\langle  f_1 - v, e_2 \rangle }( f_3 + v) \\
& = \frac{( f_1 - v) \cdot ( f_3 + v)}{( f_1 -v) \cdot ( f_1 - v)} ( f_1 - v) + \frac{e_2 \cdot ( f_3 + v) }{e_2 \cdot e_2} e_2 \\
& = \frac{2- v \cdot v}{1 + v \cdot  v} (e_1 - v) + e_2 \\
& = \frac{1- 2 v \cdot v}{1 + v \cdot v} (e_1 - v) + (e_1 + e_2 - v). 
\end{align*} 
Therefore the $\Lambda$-MLE given $\widetilde{f}$ is  $$ (\alpha, \beta) = \left( \frac{1 - 2 v \cdot v}{1 + v \cdot v} , 1 \right).$$
Since $v$ is non-zero, we have that $(\alpha, \beta) \neq (1,1)$ for any $\widetilde{f}$.
An analogous calculation to the one above shows that the $\Lambda$-MLE given $\widetilde{f}(\epsilon)$ is $$ \left( \frac{1 - 2 \epsilon^2 (v \cdot v)}{1 + \epsilon^2 (v \cdot v)} , 1 \right).$$ This expression, while never equal to the unique $\Lambda$-MLE $(1,1)$ given $f$ for $\epsilon \neq 0$, tends to $(1,1)$ as $\epsilon$ tends to zero. 
\end{proof}

The above results suggest the following open questions, which are open even for DAG models on star-shaped graphs.

\begin{question}
Given a DAG $\mathcal{G}$, can we characterise those samples $f$ such that the $\Lambda$-MLE given any $f$-stabilisation is a $\Lambda$-MLE given $f$? Is there a sample $f$ such that some $f$-stabilisations have as their $\Lambda$-MLE a $\Lambda$-MLE given $f$, but others don't? Is there an unstable sample $f$ and $\Lambda$-MLE $\lambda$ of $f$ such that $X_{f, \lambda}^{\operatorname{lim}}$ is empty or all of $X_f$? 
\end{question} 

Regarding the first question, \thref{condalwayssatisfied} shows that for DAG models on star-shaped graphs all samples $f$ such that an MLE given $f$ exists have this property, whilst \thref{unstableexample1} and \thref{unstableexample2} show that unstable samples may or may not have this property. We conjecture, based on these results, that for star-shaped graphs the $\Lambda$-MLE given any $f$-stabilisation of a sample $f$ is a $\Lambda$-MLE given $f$ either if a $\Lambda$-MLE given $f$ exists, or if $f$ does not admit any linear dependencies amongst the unique set of parents.

\section{Outlook} \label{sec:outlook}

This paper gives a way to package an affine lift of a complete collineation from $\PP(\KK^m)$ to $\PP(\KK^n)$ into a sample for a DAG model on $m$ vertices. The MLE given such a sample is unique.
In this section we consider how one might think of the
moduli space of complete collineations as a statistical model.
In such a model, samples should correspond to affine lifts of complete collineations and the MLE given any sample should be unique.
 
We describe a sampling algorithm that takes as input a usual sample and outputs a complete collineation in Section \ref{subsec:ccassamples}. In Section \ref{subsec:blowups} we ask which statistical models may have affine lifts of complete collineations as their sample space.

\subsection{Sampling complete collineations} 
\label{subsec:ccassamples}

We describe an algorithm for obtaining an affine lift $(f_1,\hdots,f_t)$ of a complete collineation from $\PP(\KK^m)$ to $\PP(\KK^n)$ with first term a sample $f: \KK^m \to \KK^n$. We assume $ n \geq m$, which is without loss of generality by Section~\ref{sec:relating}.  

If $f$ has full rank, then $([f ])$ is a complete collineation. If not, choose a basis for $\ker f$, which consists of vectors that are linear combinations of the $m$ variables. We then sample each vector in this basis a total of $\dim \coker f = n - \dim f$ times. Sampling along a linear combination of variables appears in data analysis contexts such as \cite{squires2023linear}. We do not allow the case where all samples obtained from this procedure are zero. Consider the $\dim \coker f \times \dim \ker f$ matrix whose columns are these samples. By identifying $\coker f$ with $(\im f)^{\perp}$ via the standard inner product on $\KK^n$, and choosing a basis for $(\im f)^{\perp}$, this matrix determines a map $f_2: \ker f \to (\im f)^{\perp} \cong \coker f$. If $f_2$ has maximal rank, then $([f],[f_2])$ is a complete collineation, and we stop. If not, we follow the same procedure, replacing $f $ by $f_2$. Eventually, we reach $f_t$ of maximal rank, thus giving the desired affine lift $(f ,f_2,\hdots, f_t)$. 

It is important to observe the distinction in the choice of basis for $\ker f_i$ compared to $(\im f_i)^{\perp}$. Indeed, the choice of the former influences the sampling itself, since we sample linear combinations of the vertices corresponding to the chosen basis vectors of $\ker f$. By contrast, the sampling is independent of the choice of basis of $(\im f_i)^{\perp}$, depending only on $\operatorname{dim} (\im f_i)^{\perp}$. It is unclear how the sampling procedure could be modified so that it changes according to the basis chosen for $(\operatorname{im} f_i)^{\perp}$ at each stage, thereby making the algorithm canonical.

This question may be better answered from a different perspective, by thinking about what statistical model might have the moduli space of complete collineations as its space of samples. Section \ref{subsec:blowups} explores this perspective.

\subsection{Complete collineations as the sample space for a statistical model} \label{subsec:blowups}

Our hope is that there should exist a statistical model determined by a DAG $\mathcal{G}$ on $m$ nodes, such that a sample for this statistical model corresponds to an affine lift of a complete collineation from $\PP(\KK^m)$ to $\PP(\KK^n)$ for some $n$. 
Gaussian group models~\cite{amendola2021invariant} offer a promising starting point, if we assume that $\mathcal{G}$ is transitive. In this case the DAG model on $\mathcal{G}$ 
coincides with the Gaussian group model determined by the representation of the group $G(\mathcal{G})$ on $\KK^m$, where $$ G(\mathcal{G}) := \{a \in \GL_m(\KK) \ | \ a_{ij} = 0 \text{ for all $i \neq j$ such that $j \not\to i$ in $\mathcal{G}$}\}.$$
The representation of $G(\mathcal{G})$ on $\KK^m$ naturally extends to a representation on $(\KK^m)^n$ for any~$n$. This is the right multiplication action of $G(\mathcal{G})$ on $\Mat_{n \times m}(\KK)$, which induces a right multiplication action on $\PP(\Hom(\KK^m,\KK^n))$. Since $\mathcal{M}$ is a blow-up of $\PP(\Hom(\KK^m,\KK^n))$ along $G(\mathcal{G})$-invariant centres, it has an induced action of $G(\mathcal{G})$. By contrast to $\PP(\Hom(\KK^m,\KK^n))$, the moduli space $\mathcal{M}$ is not of the form $\PP(U)$ with the $G(\mathcal{G})$ action induced by a representation $G(\mathcal{G}) \to \GL(U)$, so it is not obvious how to associate to the action of $G(\mathcal{G})$ on $\mathcal{M}$ a Gaussian group model. 

One approach is to use the fact that the action of $G(\mathcal{G})$ on $\mathcal{M}$ is linear, so that $\mathcal{M}$ can be embedded $G(\mathcal{G})$-equivariantly inside a larger projective space $\PP(U)$, with $G(\mathcal{G})$ acting linearly on $\PP(U)$ via a representation $G(\mathcal{G}) \to \GL(U)$. This representation does indeed gives rise to a Gaussian group model. Unfortunately, this is not quite the model we are after. The sample space is too big: we are interested only in the subvariety of those samples corresponding to complete collineations, and it is unclear how to interpret the condition that $[f] \in \PP(U)$ lies in $\mathcal{M}$ in a statistically meaningful way. Moreover, it is unclear how to relate MLEs for this new model to MLEs for the original model. 

A remaining open problem then is whether there is another statistical model that can be constructed from the action of $G(\mathcal{G})$ on $\mathcal{M}$, one in which samples are affine lifts of complete collineations, MLEs given samples are always unique, and MLEs can be more easily related to those of the DAG model on $\mathcal{G}$.

\bibliographystyle{alpha}
 \bibliography{literatur}

\newcommand{\etalchar}[1]{$^{#1}$}
\begin{thebibliography}{MMM{\etalchar{+}}23}

\bibitem[AKRS21]{amendola2021invariant}
Carlos Am{\'e}ndola, Kathl{\'e}n Kohn, Philipp Reichenbach, and Anna Seigal.
\newblock Invariant theory and scaling algorithms for maximum likelihood
  estimation.
\newblock {\em SIAM Journal on Applied Algebra and Geometry}, 5(2):304--337,
  2021.

\bibitem[BDG{\etalchar{+}}21]{bernstein2021maximum}
Daniel~Irving Bernstein, Sean Dewar, Steven~J Gortler, Anthony Nixon, Meera
  Sitharam, and Louis Theran.
\newblock Maximum likelihood thresholds via graph rigidity.
\newblock {\em arXiv preprint arXiv:2108.02185}, 2021.

\bibitem[BI66]{Israel1966}
Adi Ben-Israel.
\newblock On error bounds for generalized inverses.
\newblock {\em SIAM Journal on Numerical Analysis}, 3(4):585--592, 1966.

\bibitem[BPR06]{basu2006algorithms}
Saugata Basu, Richard Pollack, and Marie-Francoise Roy.
\newblock {\em Algorithms in real algebraic geometry}, volume~10.
\newblock Springer, 2006.

\bibitem[BS19]{blekherman2019maximum}
Grigoriy Blekherman and Rainer Sinn.
\newblock Maximum likelihood threshold and generic completion rank of graphs.
\newblock {\em Discrete \& Computational Geometry}, 61:303--324, 2019.

\bibitem[Buh93]{buhl1993existence}
S{\o}ren~L Buhl.
\newblock On the existence of maximum likelihood estimators for graphical
  {G}aussian models.
\newblock {\em Scandinavian Journal of Statistics}, pages 263--270, 1993.

\bibitem[DFKP19]{drton2019maximum}
Mathias Drton, Christopher Fox, Andreas K{\"a}ufl, and Guillaume Pouliot.
\newblock The maximum likelihood threshold of a path diagram.
\newblock {\em The Annals of Statistics}, 47(3):1536--1553, 2019.

\bibitem[DKH21]{drton2021existence}
Mathias Drton, Satoshi Kuriki, and Peter Hoff.
\newblock Existence and uniqueness of the {K}ronecker covariance mle.
\newblock {\em The Annals of Statistics}, 49(5):2721--2754, 2021.

\bibitem[DM21]{derksen2021maximum}
Harm Derksen and Visu Makam.
\newblock Maximum likelihood estimation for matrix normal models via quiver
  representations.
\newblock {\em SIAM Journal on Applied Algebra and Geometry}, 5(2):338--365,
  2021.

\bibitem[DMV21]{dinu2021geometry}
Rodica~Andreea Dinu, Mateusz Micha{\l}ek, and Martin Vodi{\v{c}}ka.
\newblock Geometry of the {G}aussian graphical model of the cycle.
\newblock {\em arXiv preprint arXiv:2111.02937}, 2021.

\bibitem[DMW22]{derksen2022maximum}
Harm Derksen, Visu Makam, and Michael Walter.
\newblock Maximum likelihood estimation for tensor normal models via castling
  transforms.
\newblock In {\em Forum of Mathematics, Sigma}, volume~10, page e50. Cambridge
  University Press, 2022.

\bibitem[DWW14]{danaher2014joint}
Patrick Danaher, Pei Wang, and Daniela~M Witten.
\newblock The joint graphical lasso for inverse covariance estimation across
  multiple classes.
\newblock {\em Journal of the Royal Statistical Society. Series B, Statistical
  methodology}, 76(2):373, 2014.

\bibitem[GH94]{Giusti1994}
Marc Giusti and Joos Heintz.
\newblock La détermination des points isolés et de la dimension d'une
  variété algébrique peut se faire en temps polynomial.
\newblock {\em Computational Algebraic Geometry and Commutative Algebra}, 34,
  02 1994.

\bibitem[GS18]{gross2014maximum}
Elizabeth Gross and Seth Sullivant.
\newblock The maximum likelihood threshold of a graph.
\newblock {\em Bernoulli}, 24(1):386--407, 2018.

\bibitem[LBH82]{Hervier1982}
Patrick Le~Barz and Y.~Hervier.
\newblock {\em Enumerative geometry and classical algebraic geometry}.
\newblock Progress in mathematics. Birkhäuser, 1982.

\bibitem[MMM{\etalchar{+}}23]{manivel2023complete}
Laurent Manivel, Mateusz Micha{\l}ek, Leonid Monin, Tim Seynnaeve, and Martin
  Vodi{\v{c}}ka.
\newblock Complete quadrics: Schubert calculus for {G}aussian models and
  semidefinite programming.
\newblock {\em Journal of the European Mathematical Society}, 2023.

\bibitem[MMW21]{michalek2021maximum}
Mateusz Micha{\l}ek, Leonid Monin, and Jaros{\l}aw~A Wisniewski.
\newblock Maximum likelihood degree, complete quadrics, and
  $\mathbb{C}^*$-action.
\newblock {\em SIAM journal on applied algebra and geometry}, 5(1):60--85,
  2021.

\bibitem[MRS21]{makam2021symmetries}
Visu Makam, Philipp Reichenbach, and Anna Seigal.
\newblock Symmetries in directed {G}aussian graphical models.
\newblock {\em arXiv preprint arXiv:2108.10058}, 2021.

\bibitem[Rei23]{PhDthesisPhilipp}
Philipp Reichenbach.
\newblock {\em Invariant Theory in Computational Complexity and Algebraic
  Statistics}.
\newblock PhD thesis, TU Berlin, 2023.

\bibitem[SSBU23]{squires2023linear}
Chandler Squires, Anna Seigal, Salil~S Bhate, and Caroline Uhler.
\newblock Linear causal disentanglement via interventions.
\newblock 2023.

\bibitem[Ste69]{Stewart1969}
G.~W. Stewart.
\newblock On the continuity of the generalized inverse.
\newblock {\em SIAM Journal on Applied Mathematics}, 17(1):33--45, 1969.

\bibitem[SU10]{sturmfels2010multivariate}
Bernd Sturmfels and Caroline Uhler.
\newblock Multivariate {G}aussians, semidefinite matrix completion, and convex
  algebraic geometry.
\newblock {\em Annals of the Institute of Statistical Mathematics},
  62(4):603--638, 2010.

\bibitem[Sul18]{sullivant2018algebraic}
Seth Sullivant.
\newblock {\em Algebraic statistics}, volume 194.
\newblock American Mathematical Soc., 2018.

\bibitem[Tha99]{Thaddeuscc}
Michael Thaddeus.
\newblock Complete collineations revisited.
\newblock {\em Mathematische Annalen}, 315:1432--1807, 1999.

\bibitem[TK88]{Kleinman1988}
Anders Thorup and Steven Kleiman.
\newblock Complete bilinear forms.
\newblock In {\em Algebraic Geometry Sundance 1986}, pages 253--320, Berlin,
  Heidelberg, 1988. Springer Berlin Heidelberg.

\bibitem[Vai84]{vainsencher1984complete}
Israel Vainsencher.
\newblock Complete collineations and blowing up determinantal ideals.
\newblock {\em Mathematische Annalen}, 267:417--432, 1984.

\bibitem[Woo56]{wooding1956multivariate}
Robin~A Wooding.
\newblock The multivariate distribution of complex normal variables.
\newblock {\em Biometrika}, 43(1/2):212--215, 1956.

\bibitem[WZV{\etalchar{+}}04]{wille2004sparse}
Anja Wille, Philip Zimmermann, Eva Vranov{\'a}, Andreas F{\"u}rholz, Oliver
  Laule, Stefan Bleuler, Lars Hennig, Amela Preli{\'c}, Peter von Rohr, Lothar
  Thiele, et~al.
\newblock Sparse graphical {G}aussian modeling of the isoprenoid gene network
  in {A}rabidopsis thaliana.
\newblock {\em Genome Biology}, 5(11):1--13, 2004.

\end{thebibliography}
 
\end{document}